\documentclass[10pt]{article}
\usepackage{amsthm}
\usepackage{amssymb}
\usepackage{amsmath}
\usepackage{mathrsfs}
\usepackage{graphicx}
\usepackage{comment}
\usepackage{float}
\usepackage{mathtools}
\usepackage[backend=bibtex,style=numeric,sorting=nyt]{biblatex}
\addbibresource{citations.bib}


\mathtoolsset{showonlyrefs}
\allowdisplaybreaks
\title{Splitting Quantum Graphs}
\date{\today}
\author{Nathaniel Smith and Alim Sukhtayev}

\newcommand{\CC}{{\mathbb{C}}}

\newcommand{\RR}{{\mathbb{R}}}

\newcommand{\diag}{{\rm diag \,}}

\newcommand{\rlam}{{\sqrt{\lambda}}}

\newcommand{\dom}{{\text{dom}\,}}
\newcommand{\ran}{{\text{ran}\,}}
\newcommand{\vy}{{\Vec{y}}}
\newcommand{\vz}{{\Vec{z}}}
\newcommand{\vw}{{\Vec{w}}}
\newcommand{\vu}{{\Vec{u}}}
\newcommand{\vv}{{\Vec{v}}}
\newcommand{\vf}{{\Vec{f}}}
\newcommand{\vx}{{\Vec{x}}}

\newcommand{\vc}{{\Vec{c}}}
\newcommand{\U}{{\mathcal{U}}}
\newcommand{\E}{{\mathcal{E}}}
\newcommand{\B}{{\mathcal{B}}}
\newcommand{\la}{{\langle}}
\newcommand{\ra}{\rangle}
\newcommand{\M}{\mathcal{M}}

\newcommand{\sqrten}{\sqrt{\lambda -\nu}}
\newcommand{\myeq}[1]{\mathrel{\stackrel{\makebox[0pt]{\mbox{\normalfont\tiny {#1}}}}{=}}}
\newtheorem{theorem}{Theorem}[section]

\newtheorem{lemma}[theorem]{Lemma}

\theoremstyle{definition}
\newtheorem{definition}[theorem]{Definition}
\newtheorem{remark}[theorem]{Remark}

\begin{document}
\maketitle
\begin{abstract}
We derive a counting formula for the eigenvalues of Schr\"odinger operators with self-adjoint boundary conditions on quantum star graphs. More specifically, we develop techniques using Evans functions to reduce full quantum graph eigenvalue problems into smaller subgraph eigenvalue problems. These methods provide a simple way to calculate the spectra of operators with localized potentials.
\end{abstract}

\section{Introduction}
This paper studies the eigenvalues of Schr\"odinger operators on quantum star graphs with $n$ edges. We will refer to the $i$th edge of a quantum graph $\Omega$ as $\epsilon_i$. Each edge is parameterized to have coordinates between $0$ and $\ell_i>0$, where $0$ is assigned to the shared origin of all edges and $\ell_i$ is the other endpoint.

A function on a quantum graph can be written as a vector function $\vf$, where the $i$th component $f_i$ represents the portion of the function defined over the $i$th edge. We assign a separate variable $x_i$ to each edge $\epsilon_i$, which takes values on the interval $[0,\ell_i]$. The collection of all these $x_i$ is abbreviated as $\vx$.

\subsection{Basic Definitions and Notations}
We are interested particularly in sets of separated and self-adjoint boundary conditions $\Gamma=\{\alpha_1,\alpha_2,\beta_1,\beta_2\}$, where these $\alpha_i$ and $\beta_i$ are $n\times n$ matrices satisfying the following:
\begin{equation} \label{eq:3}
\begin{split}
\text{rank}(\begin{bmatrix}
\alpha_1 & \alpha_2
\end{bmatrix}) = \text{rank}(\begin{bmatrix}
\beta_1 & \beta_2
\end{bmatrix})=n\,,
\\
\alpha_1\alpha_2^* = \alpha_2\alpha_1^* \hspace{0.25in} \text{and} \hspace{0.25in} \beta_1\beta_2^* = \beta_2\beta_1^*\,,
\end{split}
\end{equation}
where the $*$ superscript represents the matrix adjoint. We say a function $\vu$ satisfies $\Gamma$ boundary conditions if it satisfies the equations
\begin{equation} \label{eq:5}
\begin{split}
\alpha_1 \vu(\Vec{0}) + \alpha_2 \vu'(\Vec{0}) = \Vec{0}\,,
\\
\beta_1 \vu(\Vec{\ell}) + \beta_2 \vu'(\Vec{\ell}) = \Vec{0}\,.
\end{split}
\end{equation}
In this paper, $\vu'$ refers to a vector whose $i$th component is $u_i'$, which is defined to be the derivative of the $i$th component $u_i$ of $\vu$ with respect to the relevant spatial variable $x_i$.

We will follow the reasonable convention that the matrices $\beta_1$ and $\beta_2$ should both be diagonal. This comes from the natural assumption that the only interaction between functions should happen at shared vertices (in this case, only at the origin).

\begin{definition}\label{def:trace}
Given a set $\Gamma$ of separated and self-adjoint boundary conditions as described above, we define the $\Gamma$-trace of a function $\vu$ as the following vector in $\CC^{2n}$:
\begin{equation} \label{eq:7}
\gamma_\Gamma \vu = \begin{bmatrix}
\beta_1 \vu(\Vec{\ell}) + \beta_2 \vu'(\Vec{\ell})\\
\alpha_1 \vu(\Vec{0}) + \alpha_2 \vu'(\Vec{0})
\end{bmatrix}\,,
\end{equation}
provided $\vu$ is sufficiently smooth.
\end{definition}
As an example, by choosing $\alpha_1$ and $\beta_1$ to be identity matrices while letting $\alpha_2$ and $\beta_2$ be zero matrices, $\Gamma$ would be equivalent to Dirichlet boundary conditions at all vertices. The Dirichlet trace would then be the column vector
\begin{equation} \label{eq:Dtrace}
\gamma_D \vu = \begin{bmatrix}
u_1(\ell_1)& \hdots &u_n(\ell_n) & u_1(0)& \hdots & u_n(0)
\end{bmatrix}^t\,,
\end{equation}
where $t$ represents the matrix (nonconjugating) transpose. One other notable trace that will show up often is the Neumann trace, defined as
\begin{equation} \label{eq:Ntrace}
\gamma_N\vu = \begin{bmatrix}
u_1'(\ell_1) & \hdots & u_n'(\ell_n) & -u_1'(0)& \hdots & -u_n'(0)
\end{bmatrix}^t\,.
\end{equation}

We are interested in solving eigenvalue problems of the form
\begin{equation} \label{eq:1}
\begin{cases}
(H^\Omega-\lambda) \vu = \Vec{0}\,,\\
\gamma_\Gamma \vu = \Vec{0}\,,
\end{cases}
\end{equation}
where $\vu(\vx,\lambda)$ is a function over $\Omega$, $H^\Omega$ is a differential expression defined as
\begin{equation} \label{eq:2}
H^\Omega = -\Delta + V := \diag (-\partial^2_{x_i}+V_i(x_i)| 1\leq i\leq n)\,,
\end{equation}
where $\diag$ denotes a diagonal matrix whose $i$th diagonal entry is the $i$th argument specified. We assume that each potential function $V_i$ is in $L^1(0,\ell_i)$.

\begin{definition}\label{def:lgamma}
$H^\Omega_\Gamma$ is a Schr\"odinger operator which is defined over the domain $\dom(H^\Omega_\Gamma):=\{\vu\in H^2(\Omega):\gamma_\Gamma\vu = \Vec{0}\}$, where $\vu\in H^2(\Omega)$ is shorthand to denote $(\vu)_i\in H^2(0,\ell_i)$ for all $i$, and $H^2(0,\ell_i)$ is a Sobolev space defined over $\epsilon_i$. Then, for functions in this domain,
\[
H^\Omega_\Gamma \vu := H^\Omega\vu.
\]
We denote the spectrum of $H^{\Omega}_\Gamma$ as $\sigma(H^\Omega_\Gamma)$, and the resolvent set as $\rho(H^{\Omega}_{\Gamma})$. Note that our assumptions on the potentials $V_i$ in $H^\Omega$ guarantee that $\sigma(H^{\Omega}_\Gamma)$ consists only of the discrete spectrum \cite{W}.
\end{definition}

\begin{remark}\label{rem:extraops}
We can also introduce smaller associated operators over subgraphs. Let $x_j=s_1$ be a splitting point (though not a vertex) on edge $\epsilon_j$, which will play the role of a vertex in two connected subgraphs of $\Omega$, denoted $\Omega_1$ and $\Omega_2$, which satisfy following conditions:
\[
\begin{split}
\Omega_1\cup \Omega_2 = \Omega &\text{  and  }\Omega_1\cap \Omega_2 = \{s_1\},
\\\ell_j\in \Omega_1 &\text{  and  } 0\in \Omega_2.
\end{split}
\]
An example of such a partition is shown in Figure \ref{fig:exampleSingle}. Let $H^{\Omega_1}$ and $H^{\Omega_2}$ be the restrictions of $H^{\Omega}$ to $\Omega_1$ and $\Omega_2$, respectively. Next, define a new set of boundary conditions $\Gamma_1$ over $\Omega_1$ which shares $\Gamma$ boundary conditions at all common vertices with $\Omega$, and which includes a Dirichlet boundary condition $u(s_1) = 0$ at $s_1$. Similarly, we define $\Gamma_2$ as a set of boundary conditions over $\Omega_2$ which is the same as $\Gamma$ at all common vertices with $\Omega$, and which includes the Dirichlet condition $u_j(s_1) = 0$ at $s_1$.
\begin{figure}[t]
    \centering
    \includegraphics[width=1.5in]{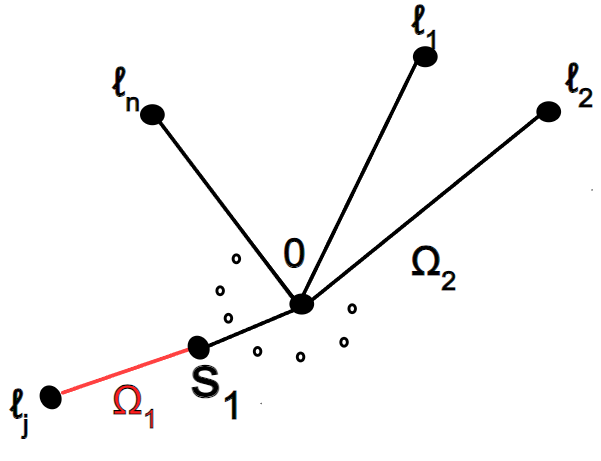}
    \caption{A partition of $\Omega$ into $\Omega_1$ and $\Omega_2$ at $s_1$.}
    \label{fig:qgraph}
\end{figure}

Then, we will consider the operators $H^{\Omega_1}_{\Gamma_1}$ and $H^{\Omega_2}_{\Gamma_2}$, which are defined as follows:
\begin{equation}
\begin{split}
H^{\Omega_1}_{\Gamma_1}u := H^{\Omega_1}u\text{ for }u\in \dom (H^{\Omega_1}_{\Gamma_1}) := \{u\in H^2(\Omega_1): \gamma_{\Gamma_1}u = \Vec{0}\},
\\H^{\Omega_2}_{\Gamma_2}\vu := H^{\Omega_2}\vu\text{ for }\vu\in \dom (H^{\Omega_2}_{\Gamma_2}) := \{\vu\in H^2(\Omega_2): \gamma_{\Gamma_2}\vu = \Vec{0}\}.
\end{split}
\end{equation}
\end{remark}
Note that $\gamma_{\Gamma_1}u$ and $\gamma_{\Gamma_2}\vu$ have a near identical definition to that in formula \eqref{eq:7} for their respective $\alpha$ and $\beta$ matrices. However, in the case of $\gamma_{\Gamma_1}u$, the new vertex $s_1$ will take the place of the origin $0$, and in the case of $\gamma_{\Gamma_2}\vu$ $s_1$ will take the place of $\ell_j$.

We will make heavy use of the Evans function, a powerful and popular tool in the stability theory of traveling waves \cite{AGJ,CLS13,KP13, PW1, S}. In general, Evans functions are not uniquely defined. To avoid this issue, we specify a unique construction of an Evans function $E^{\Omega}_\Gamma$ associated with a given operator $H^{\Omega}_\Gamma$ in Definition \ref{def:evansfunction}. Thus, for the remainder of the paper, we will simply refer to \textit{the} Evans function of an operator, always assuming that it is constructed by Definition \ref{def:evansfunction}.

If $\Omega$ is split at some some non-vertex point $x_j = s_1$ into regions $\Omega_1$ and $\Omega_2$ as discussed in Remark $\ref{rem:extraops}$, then a one-sided map is defined to be a Dirichlet-to-Neumann map associated with $s_1$. In particular, we specify two such maps given our splitting.

\begin{definition}\label{def:boundmap}
Let $\lambda\in \rho(H^{\Omega_1}_{\Gamma_1})$. Fix $f\in \CC$. Then we define a map $M_1(\lambda):\CC\rightarrow\CC$ as
\begin{equation}\label{eq:252}
M_1(\lambda)f = -u'(s_1,\lambda),    
\end{equation}
where $u$ is the unique solution to the boundary value problem
\[
\begin{cases}
(H^{\Omega_1}_{\Gamma_1}-\lambda)u = 0,\\
u(s_1) = f,\\
\text{$u$ satisfies all other $\Gamma_1$ conditions.}
\end{cases}
\]
Due to the definition of $u$, we have that
\[\label{eq:300}
M_1(\lambda) = -\frac{u'(s_1,\lambda)}{u(s_1,\lambda)}.
\]
We suppress the $s_1$ dependence of the map in this paper. 

Similarly, for $\lambda\in \rho(H^{\Omega_2}_{\Gamma_2})$ and $f\in\CC$ we define a map $M_2(\lambda):\CC\rightarrow \CC$ as
\begin{equation}\label{eq:251}
M_2(\lambda)f = u_j'(s_1,\lambda),
\end{equation}
where $u_j$ is the $j$th component of the unique solution $\vu$ to the boundary problem
\[
\begin{cases}
(H^{\Omega_2}_{\Gamma_2}-\lambda)\vu = \Vec{0},\\
u_j(s_1) = f,\\
\text{$\vu$ satisfies all other $\Gamma_2$ conditions.}
\end{cases}
\]
Due to the definition of $\vu$, we can exactly express $M_2(\lambda)$ as
\[\label{eq:310}
M_2(\lambda) = \frac{u_j'(s_1,\lambda)}{u_j(s_1,\lambda)}.
\]
Again, we choose to suppress the $s_1$ dependence of this map. Note that the right hand sides of equations \eqref{eq:252} and \eqref{eq:251} are the portions of the Neumann trace over $\Omega_1$ and $\Omega_2$, respectively, which correspond to $s_1$. The sum $M_1+M_2$ is called a two-sided map associated with $s_1$.
\end{definition}

\subsection{Summary of Paper}
In this paper we develop methods for finding the spectra of Schr\"odinger differential operators on quantum star graphs. In particular, we wish to be able to split a quantum star graph into smaller subgraphs, and then find the spectrum of the original problem by examining the spectra of the related subgraph problems. This is particularly helpful for dealing with operators with localized potentials, since it allows us to isolate the zero potential and nonzero potential portions from one another, resulting in simpler subproblems.

In Section 2, we find a general formula for $R_\lambda^{\Gamma}\vv$, the resolvent of $H^\Omega_\Gamma$ applied to an arbitrary $L^2(\Omega)$ function $\vv$.

In Section 3, we express $\vu_{\Gamma,i}$ (a solution to an inhomogeneous $\Gamma$ boundary value problem) in terms of $R^\Gamma_\lambda\vv$. Such functions are the building blocks of the two-sided maps. This expression (Theorem \ref{thm:general_ugamma}) involves three key projection matrices: the Dirichlet, Neumann, and Robin projections. The various properties of these matrices allow us to algebraically apply boundary conditions to solutions in a general way, without relying on any knowledge of the specific type of condition (beyond being representable by the $\alpha$ and $\beta$ matrices discussed earlier).

In Section 4, we prove the main results of this paper. They relate the eigenvalues of $H^\Omega_{\Gamma}$ to those of the smaller operators that come from splitting $\Omega$ as discussed in Remark \ref{rem:extraops}. We state them here:
\begin{theorem}\label{thm:determinant_equivalence}
Suppose $\Omega$ is split into $\Omega_1$ and $\Omega_2$ at some non-vertex point $s_1$, and let $\Gamma_1$ and $\Gamma_2$ be the new boundary conditions as constructed in Remark \ref{rem:extraops}. Additionally, suppose $\lambda\in\rho(H^{\Omega_1}_{\Gamma_1})\cap \rho(H^{\Omega_2}_{\Gamma_2})$, and let $M_1+M_2$ be the two-sided map at $s_1$. Then
\begin{equation}\label{eq:something}
\frac{E^\Omega_\Gamma(\lambda)}{E^{\Omega_1}_{\Gamma_1}(\lambda)E^{\Omega_2}_{\Gamma_2}(\lambda)}=(M_1 + M_2)(\lambda),
\end{equation}
where $E^\Omega_\Gamma$, $E^{\Omega_1}_{\Gamma_1}$, and $E^{\Omega_2}_{\Gamma_2}$ are the Evans functions associated with $H^\Omega_\Gamma$, $H^{\Omega_1}_{\Gamma_1}$, and $H^{\Omega_2}_{\Gamma_2}$, respectively.
\end{theorem}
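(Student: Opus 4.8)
The plan is to construct the Evans function $E^\Omega_\Gamma(\lambda)$ as a determinant built from a fundamental system of solutions to $(H^\Omega-\lambda)\vu=\Vec 0$, evaluated against the $\Gamma$-trace, and then to exploit the block structure induced by the splitting at $s_1$. First I would recall the construction in Definition~\ref{def:evansfunction} (referenced in the excerpt): one fixes a basis of the $2n$-dimensional solution space of $(H^\Omega-\lambda)\vu=\Vec 0$ on $\Omega$, applies $\gamma_\Gamma$ to each basis element, and takes the determinant of the resulting $2n\times 2n$ matrix; $\lambda$ is an eigenvalue precisely when this determinant vanishes. The key observation is that, because the potential and the operator are diagonal across edges and because the splitting point $s_1$ lies on edge $\epsilon_j$, the solution space on $\Omega$ decomposes edge-by-edge, and on edge $\epsilon_j$ a solution is determined by matching a solution on the $\Omega_1$-part of $\epsilon_j$ to a solution on the $\Omega_2$-part of $\epsilon_j$ via continuity of value and derivative at $s_1$.

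The main computational step is a determinant factorization. I would choose, on $\Omega_1$, a fundamental system adapted to $\Gamma_1$ (so that all-but-one of the $\Gamma_1$ conditions are automatically satisfied, leaving the Dirichlet condition at $s_1$ as the one that determines $E^{\Omega_1}_{\Gamma_1}$), and similarly on $\Omega_2$ adapted to $\Gamma_2$. Concretely: let $u$ be the solution on $\Omega_1$ satisfying all $\Gamma_1$ conditions except possibly the Dirichlet condition at $s_1$, normalized so that $E^{\Omega_1}_{\Gamma_1}(\lambda)$ equals (a fixed nonzero multiple of) $u(s_1,\lambda)$; similarly let $\vu$ on $\Omega_2$ give $E^{\Omega_2}_{\Gamma_2}(\lambda)=c\, u_j(s_1,\lambda)$. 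Since $\lambda\in\rho(H^{\Omega_1}_{\Gamma_1})\cap\rho(H^{\Omega_2}_{\Gamma_2})$, both $u(s_1,\lambda)$ and $u_j(s_1,\lambda)$ are nonzero, so the quotient on the left of \eqref{eq:something} is well-defined. Building a fundamental system on $\Omega$ out of these adapted systems (extending $u$ by its natural continuation past $s_1$ on edge $\epsilon_j$, and similarly for the $\Omega_2$-solutions), the $2n\times 2n$ matrix whose determinant is $E^\Omega_\Gamma(\lambda)$ becomes block-triangular after row/column operations that encode the matching conditions at $s_1$; its determinant factors as $E^{\Omega_1}_{\Gamma_1}(\lambda)\,E^{\Omega_2}_{\Gamma_2}(\lambda)$ times a $1\times 1$ Schur complement corresponding to the $s_1$ matching. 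That Schur complement is exactly the Wronskian-type quantity $-\dfrac{u'(s_1,\lambda)}{u(s_1,\lambda)}+\dfrac{u_j'(s_1,\lambda)}{u_j(s_1,\lambda)}$ (up to the normalization constants, which must be chosen consistently so they cancel), which by the formulas in Definition~\ref{def:boundmap} is precisely $M_1(\lambda)+M_2(\lambda)$.

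I would then verify the bookkeeping: check that the $\Gamma$-trace of the glued fundamental system really does reduce, after the matching substitution at $s_1$, to the direct sum of the $\Gamma_1$-trace and $\Gamma_2$-trace structures plus the single coupling row/column; check that the normalization constants in the three Evans functions (fixed by Definition~\ref{def:evansfunction}) combine so that the overall multiplicative constant in \eqref{eq:something} is $1$; and confirm signs, in particular that the sign conventions in \eqref{eq:Ntrace} (the $-u_i'(0)$ entries) and in \eqref{eq:252}--\eqref{eq:251} are consistent with $M_1$ carrying a minus sign and $M_2$ not. The main obstacle I anticipate is precisely this constant-and-sign tracking: the determinant factorization is conceptually a routine Schur-complement/Wronskian argument, but making the multiplicative constant come out to exactly $1$ (rather than some $\lambda$-independent nonzero scalar) requires the normalization in Definition~\ref{def:evansfunction} to have been set up with this splitting in mind, and one has to be careful that the ``extra'' solution directions introduced by cutting edge $\epsilon_j$ in two are accounted for without double-counting. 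A secondary subtlety is handling the case where $s_1$ is interior to $\epsilon_j$ so that edge $\epsilon_j$ contributes solution components to \emph{both} $\Omega_1$ and $\Omega_2$; the fundamental-system dimension count ($2n$ on $\Omega$ versus the sizes on $\Omega_1$ and $\Omega_2$) must be reconciled, and this is where the single leftover coupling degree of freedom — giving the scalar two-sided map — comes from.
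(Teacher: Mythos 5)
Your plan matches the paper's proof in outline. The paper also factors $E^\Omega_\Gamma$ through the cut: it shows $E^{\Omega_1}_{\Gamma_1'}E^{\Omega_2}_{\Gamma_2}-E^{\Omega_2}_{\Gamma_2'}E^{\Omega_1}_{\Gamma_1}=E^\Omega_\Gamma$ by multilinearity of the determinant in the $(n+j)$th column of $F$ (using the continuation of $z_{j,j}$ across $s_1$ written in the $\theta_j,\phi_j$ basis of Remark \ref{rem:phitheta}), which is the content of your Schur-complement step, and then identifies the coupling factor with $M_1+M_2$ via $M_1=E^{\Omega_1}_{\Gamma_1'}/E^{\Omega_1}_{\Gamma_1}$ and $M_2=-E^{\Omega_2}_{\Gamma_2'}/E^{\Omega_2}_{\Gamma_2}$.

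There is, however, a genuine gap at the step you summarize as ``the Schur complement is exactly $M_1+M_2$,'' and it sits on the $\Omega_2$ side. For $\Omega_1$, a single interval, the identification is indeed a routine $2\times 2$ Wronskian computation, since the solution realizing $M_1$ is just $z_{j,j}|_{\Omega_1}$ up to scale. But $\Omega_2$ retains the star structure: the solution $\vu$ with $u_j(s_1)=1$ and all other $\Gamma_2$ conditions is determined only implicitly --- its Cauchy data at $s_1$ is the output of an $n\times n$ linear system coupling every edge through the vertex conditions at the origin --- and your assertion that $E^{\Omega_2}_{\Gamma_2}(\lambda)=c\,u_j(s_1,\lambda)$ with $c$ \emph{independent of} $\lambda$ is precisely what has to be proven; with the wrong normalization of $\vu$ the proportionality factor is $\lambda$-dependent and the multiplicative constant in \eqref{eq:something} does not come out to $1$. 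This identification is Theorem \ref{thm:map_evans}, whose proof occupies most of Section 4.1.2 and uses the resolvent representation of $\vu_{\Gamma_2,j}$ from Theorem \ref{thm:general_ugamma}, Lemma \ref{thm:C_and_fundamental} to pass between the $n\times n$ Cramer determinants and the $2n\times 2n$ Evans determinants, and a delicate Laplace-expansion computation. Your plan files this under ``constant-and-sign tracking,'' but it is the mathematical core of the theorem: a complete proof must supply an argument of comparable substance for the star-shaped piece, not just a formal block-triangularization.
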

\begin{remark}
Despite the fact that the left and right sides of equation \eqref{eq:something} are not defined on the spectra of any of the operators involved, both sides are equal as meromorphic functions over $\CC$. Since we only care about the poles and zeros in this equation, this type of equality is entirely sufficient.
\end{remark}
A similar result has already been proven for the interval case in \cite{DCDS2012}.

Since zeros of Evans functions locate eigenvalues, Theorem \ref{thm:determinant_equivalence} can be used to easily prove anther one of the main results.
\begin{theorem}\label{thm:counting}
Let $\mathcal{N}_X(\lambda)$ count the number of eigenvalues less than or equal to $\lambda$ of any operator $X$, and let $N(\lambda)$ be the difference between the number of zeros and number of poles (including multiplicities) of $M_1+M_2$ less than or equal to $\lambda$. Then
\begin{equation}
\mathcal{N}_{H^\Omega_\Gamma}(\lambda)=\mathcal{N}_{H^{\Omega_1}_{\Gamma_1}}(\lambda)+\mathcal{N}_{H^{\Omega_2}_{\Gamma_2}}(\lambda)+N(\lambda).
\end{equation}
\end{theorem}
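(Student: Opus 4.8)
The plan is to deduce the counting formula directly from Theorem \ref{thm:determinant_equivalence} by applying the argument principle to the meromorphic identity $E^\Omega_\Gamma(\lambda) = E^{\Omega_1}_{\Gamma_1}(\lambda)\,E^{\Omega_2}_{\Gamma_2}(\lambda)\,(M_1+M_2)(\lambda)$. First I would recall the defining property of the Evans function constructed in Definition \ref{def:evansfunction}: its zeros (with multiplicity) coincide with the eigenvalues (with multiplicity) of the corresponding operator, and since the potentials are $L^1$, each $E$ is analytic (in fact entire in $\lambda$, or analytic on the relevant domain) with no poles, and the operators $H^\Omega_\Gamma$, $H^{\Omega_1}_{\Gamma_1}$, $H^{\Omega_2}_{\Gamma_2}$ have purely discrete, real, bounded-below spectrum. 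Hence $\mathcal{N}_{H^\Omega_\Gamma}(\lambda)$ equals the number of zeros of $E^\Omega_\Gamma$ in $(-\infty,\lambda]$, and similarly for the two subgraph operators.

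Next I would take $\lambda$ to be a point not in $\sigma(H^\Omega_\Gamma)\cup\sigma(H^{\Omega_1}_{\Gamma_1})\cup\sigma(H^{\Omega_2}_{\Gamma_2})$ — it suffices to prove the formula for such $\lambda$, since both sides are right-continuous step functions that jump only at these (countable, discrete) spectral points, and one then checks the jump condition separately or simply notes that the stated equality at all non-exceptional points plus a limiting argument pins down the values everywhere. For such a $\lambda$, choose a contour (or rather, since everything is real, count on the real line): the number of zeros minus poles of a meromorphic function in $(-\infty,\lambda]$ is additive over products. Applying this to the identity from Theorem \ref{thm:determinant_equivalence}, and writing $Z(f)$ and $P(f)$ for the counts of zeros and poles of $f$ in $(-\infty,\lambda]$,
\begin{equation}
Z(E^\Omega_\Gamma) - P(E^\Omega_\Gamma) = \big(Z(E^{\Omega_1}_{\Gamma_1}) - P(E^{\Omega_1}_{\Gamma_1})\big) + \big(Z(E^{\Omega_2}_{\Gamma_2}) - P(E^{\Omega_2}_{\Gamma_2})\big) + \big(Z(M_1+M_2) - P(M_1+M_2)\big).
\end{equation}
Since the Evans functions have no poles, $P(E^\Omega_\Gamma) = P(E^{\Omega_1}_{\Gamma_1}) = P(E^{\Omega_2}_{\Gamma_2}) = 0$, and by the zero-eigenvalue correspondence the left side is $\mathcal{N}_{H^\Omega_\Gamma}(\lambda)$ while the first two terms on the right are $\mathcal{N}_{H^{\Omega_1}_{\Gamma_1}}(\lambda)$ and $\mathcal{N}_{H^{\Omega_2}_{\Gamma_2}}(\lambda)$. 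The last term is exactly $N(\lambda)$ by definition, giving the claim.

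The main obstacle is making the bookkeeping at exceptional points rigorous: an eigenvalue of $H^\Omega_\Gamma$ that is simultaneously an eigenvalue of, say, $H^{\Omega_1}_{\Gamma_1}$ shows up on the left of \eqref{eq:something} as a zero but on the right is "hidden" as a cancellation between a zero of $E^{\Omega_1}_{\Gamma_1}$ and a pole of $M_1+M_2$. I would handle this by emphasizing that the identity in Theorem \ref{thm:determinant_equivalence} is an equality of meromorphic functions, so zeros and poles match up with multiplicity after all cancellations, and then argue that $N(\lambda)$ — defined as zeros minus poles of $M_1+M_2$ — automatically absorbs these crossings: at a shared eigenvalue the pole of $M_1+M_2$ is counted negatively, precisely compensating the over-count from $\mathcal{N}_{H^{\Omega_1}_{\Gamma_1}}+\mathcal{N}_{H^{\Omega_2}_{\Gamma_2}}$. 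A clean way to present this is to avoid pointwise jump-chasing entirely: fix any $\lambda$ off all three spectra, apply the argument principle on a large contour enclosing $(-\infty,\lambda]$ (or take a limit of such contours, using known asymptotics of the Evans functions as $|\lambda|\to\infty$ to control behavior at infinity), and read off the displayed additivity. Then extend to all $\lambda$ by right-continuity of $\mathcal{N}_X$ and of $N$. A secondary technical point is justifying that $M_1+M_2$, a priori defined only on $\rho(H^{\Omega_1}_{\Gamma_1})\cap\rho(H^{\Omega_2}_{\Gamma_2})$, extends meromorphically to a neighborhood of the real axis with poles of the expected orders; this follows from Theorem \ref{thm:determinant_equivalence} itself, since the right side of \eqref{eq:something} is a ratio of entire functions.
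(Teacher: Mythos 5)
Your proposal is correct and follows the same route as the paper, which disposes of Theorem \ref{thm:counting} in a single sentence by invoking the zero--eigenvalue correspondence for the Evans functions and the additivity of zero/pole counts across the meromorphic identity of Theorem \ref{thm:determinant_equivalence}. Your write-up simply makes explicit the bookkeeping (no poles for the Evans functions, cancellation at shared eigenvalues) that the paper leaves implicit.
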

In \cite{MR3961373}, a similar eigenvalue counting theorem has already been proven for the case of dividing multidimensional domains by hypersurfaces and defining two-sided maps with respect to these hypersurfaces.

We will also demonstrate how repeated applications of Theorem \ref{thm:determinant_equivalence} can be used to split a quantum graph into three subdomains over which the eigenvalues can be counted separately, which is useful for more general barrier and well problems. 
\begin{definition}\label{def:secondsplit}
Suppose $\Omega$ has been split into $\Omega_1$ and $\Omega_2$ as described in Remark \ref{rem:extraops} at a non-vertex splitting point $x_j=s_1$. We can apply the exact same process to split $\Omega_2$ at some non-vertex point $x_i=s_2$ into two connected subgraphs $\Tilde{\Omega}_1$ and $\Tilde{\Omega}_2$ which satisfy the following conditions:
\[
\begin{split}
\Tilde{\Omega}_1\cup \Tilde{\Omega}_2 = \Omega_2 &\text{  and  }\Tilde{\Omega}_1\cap \Tilde{\Omega}_2 = \{s_2\}.
\end{split}
\]
Additionally, we ensure that $0\in \Tilde{\Omega}_2$. If $s_2$ is on the same wire as $s_1$, then $s_1\in \Tilde{\Omega}_1$. If the two splitting points are on different wires, then $\ell_i\in \Tilde{\Omega}_1$.
\end{definition}

\begin{remark}
Again, such a splitting comes with some associated operators. Let $H^{\Tilde{\Omega}_1}$ and $H^{\Tilde{\Omega}_2}$ be the restrictions of $H^{\Omega_2}$ to $\Tilde{\Omega}_1$ and $\Tilde{\Omega}_2$, respectively. We can define new boundary conditions $\Tilde{\Gamma}_1$ on $\Tilde{\Omega}_1$ which match $\Gamma_2$ conditions at all common vertices with $\Omega_2$, and which include the Dirichlet condition $u(s_2)=0$ at $x_i=s_2$. Likewise, we can define $\Tilde{\Gamma}_2$ boundary conditions over $\Tilde{\Omega}_2$ which are identical to $\Gamma_2$ conditions at all common vertices, and with the Dirichlet condition $u_i(s_2)=0$ at $s_2$. Finally, using these pieces we can define the operators $H^{\Tilde{\Omega}_1}_{\Tilde{\Gamma}_1}$ and $H^{\Tilde{\Omega}_2}_{\Tilde{\Gamma}_2}$ as follows:
\begin{equation}
\begin{split}
H^{\Tilde{\Omega}_1}_{\Tilde{\Gamma}_1}u := H^{\Tilde{\Omega}_1}u\text{ for }u\in \dom (H^{\Tilde{\Omega}_1}_{\Tilde{\Gamma}_1}) := \{u\in H^2(\Tilde{\Omega}_1): \gamma_{\Tilde{\Gamma}_1}u = \Vec{0}\},
\\H^{\Tilde{\Omega}_2}_{\Tilde{\Gamma}_2}\vu := H^{\Tilde{\Omega}_2}\vu\text{ for }\vu\in \dom (H^{\Tilde{\Omega}_2}_{\Tilde{\Gamma}_2}) := \{\vu\in H^2(\Tilde{\Omega}_2): \gamma_{\Tilde{\Gamma}_2}\vu = \Vec{0}\}.
\end{split}
\end{equation}
\end{remark}

We can also construct new Dirichlet-to-Neumann maps $\M_1$ and $\M_2$, now associated at two vertices $s_1$ and $s_2$, and sum them to make another two-sided map. The construction of $\M_1$ and $\M_2$ is different for splittings on the same wire vs splittings on different wires. The specific constructions are detailed in Definitions \ref{def:onewireMs} and \ref{def:twowireMs}. In either case, we can utilize the determinant of the two-sided map $|(\M_1+\M_2)(\lambda)|$ in the following theorem:

\begin{figure}[H]
    \centering
    \includegraphics[width=1.5in]{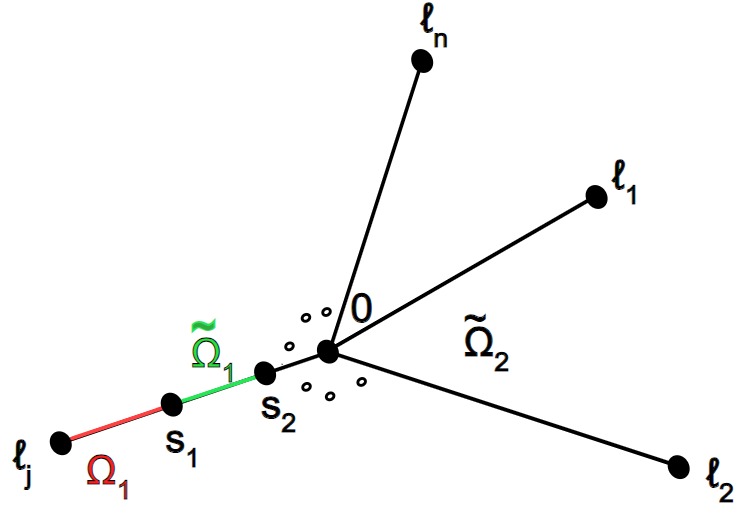}
    \includegraphics[width=1.5in]{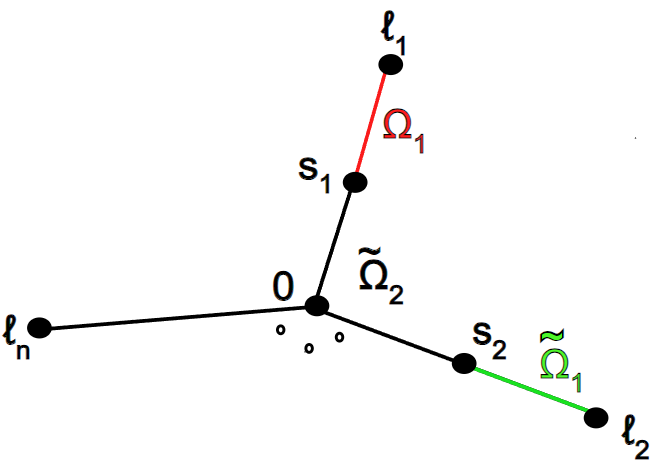}
    \caption{Two example partitions of $\Omega$ into three subgraphs.}
    \label{fig:twosplittwowire}
\end{figure}

\begin{theorem}\label{thm:twoCutEvans}
Suppose $\Omega$ is split into three quantum subgraphs $\Omega_1$, $\Tilde{\Omega}_1$, and $\Tilde{\Omega}_2$, as described in Definition \ref{def:secondsplit}. Additionally, suppose that $\lambda\in\rho(H^{\Omega_1}_{\Gamma_1})\cap \rho(H^{\Tilde{\Omega}_1}_{\Tilde{\Gamma}_1})\cap \rho(H^{\Tilde{\Omega}_2}_{\Tilde{\Gamma}_2})$, and let $\M_1+\M_2$ be the two-sided map associated with $s_1$ and $s_2$. Then
\begin{equation}
\frac{E^{\Omega}_{\Gamma}}{E^{\Omega_1}_{\Gamma_1}E^{\Tilde{\Omega}_1}_{\Tilde{\Gamma}_1}E^{\Tilde{\Omega}_2}_{\Tilde{\Gamma}_2}} = |(\M_1+\M_2)(\lambda)|,
\end{equation}
where $E^{\Omega}_{\Gamma}$, $E^{\Omega_1}_{\Gamma_1}$, $E^{\Tilde{\Omega}_1}_{\Tilde{\Gamma}_1}$, and $E^{\Tilde{\Omega}_2}_{\Tilde{\Gamma}_2}$ are the Evans functions associated with the operators $H^{\Omega}_{\Gamma}$, $H^{\Omega_1}_{\Gamma_1}$, $H^{\Tilde{\Omega}_1}_{\Tilde{\Gamma}_1}$, and $H^{\Tilde{\Omega}_2}_{\Tilde{\Gamma}_2}$, respectively.
\end{theorem}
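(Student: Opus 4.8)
The plan is to apply Theorem~\ref{thm:determinant_equivalence} twice and multiply the two resulting identities. Because $\Tilde{\Omega}_1$ and $\Tilde{\Omega}_2$ arise from $\Omega_2$ by exactly the construction of Remark~\ref{rem:extraops}, Theorem~\ref{thm:determinant_equivalence} applies verbatim with $\Omega_2$ in the role of $\Omega$: denoting by $\Tilde{M}_1+\Tilde{M}_2$ the two-sided map at $s_2$ associated with $H^{\Tilde{\Omega}_1}_{\Tilde{\Gamma}_1}$ and $H^{\Tilde{\Omega}_2}_{\Tilde{\Gamma}_2}$ (built as in Definition~\ref{def:boundmap}), one has $E^{\Omega_2}_{\Gamma_2}/\big(E^{\Tilde{\Omega}_1}_{\Tilde{\Gamma}_1}E^{\Tilde{\Omega}_2}_{\Tilde{\Gamma}_2}\big)=\Tilde{M}_1+\Tilde{M}_2$. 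Combining this with $E^{\Omega}_{\Gamma}/\big(E^{\Omega_1}_{\Gamma_1}E^{\Omega_2}_{\Gamma_2}\big)=M_1+M_2$ (Theorem~\ref{thm:determinant_equivalence} for the first split) and cancelling $E^{\Omega_2}_{\Gamma_2}$ yields
\[
\frac{E^{\Omega}_{\Gamma}}{E^{\Omega_1}_{\Gamma_1}E^{\Tilde{\Omega}_1}_{\Tilde{\Gamma}_1}E^{\Tilde{\Omega}_2}_{\Tilde{\Gamma}_2}}=(M_1+M_2)(\lambda)\,(\Tilde{M}_1+\Tilde{M}_2)(\lambda),
\]
an equality of meromorphic functions on $\CC$ (so it is immaterial that $\lambda$ might lie in $\sigma(H^{\Omega_2}_{\Gamma_2})$, where an intermediate factor would have a pole — cf.\ the remark after Theorem~\ref{thm:determinant_equivalence}).

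It remains to identify $(M_1+M_2)(\Tilde{M}_1+\Tilde{M}_2)$ with $\det(\M_1+\M_2)$. Exactly one of the three subgraphs is incident to both $s_1$ and $s_2$ ($\Tilde{\Omega}_1$ in the one-wire case, $\Tilde{\Omega}_2$ in the two-wire case); call it the \emph{middle} subgraph, and note its operator has $\lambda$ in its resolvent set by hypothesis. Let $\left[\begin{smallmatrix}a&b\\ b&c\end{smallmatrix}\right]$ be the $2\times2$ Dirichlet-to-Neumann map of the middle subgraph at $\{s_1,s_2\}$ (rows and columns ordered $(s_1,s_2)$, all derivatives pointing out of the middle subgraph; symmetric by the Lagrange identity), and let $p$ be the scalar Dirichlet-to-Neumann map at $s_2$ of the other subgraph incident to $s_2$. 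Unwinding Definition~\ref{def:boundmap} for the $s_2$-split: the middle-subgraph contribution to $\Tilde{M}_1+\Tilde{M}_2$ is $c$ (it already carries a Dirichlet condition at $s_1$, inherited by $\Tilde{\Gamma}_1$ or $\Tilde{\Gamma}_2$ from $\Gamma_2$), so $\Tilde{M}_1+\Tilde{M}_2=c+p$; and $M_2$, the Dirichlet-to-Neumann map of all of $\Omega_2$ at $s_1$, is obtained by solving on the middle subgraph with data $f$ at $s_1$ and $g$ at $s_2$ and imposing $C^1$-matching at $s_2$ against $p$, i.e.\ $bf+cg+pg=0$, whence $g=-bf/(c+p)$ and $M_2=a-b^2/(c+p)$ — the Schur complement of the lower-right entry of $\left[\begin{smallmatrix}a&b\\ b&c\end{smallmatrix}\right]+\diag(M_1,p)$.

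Since $\M_1+\M_2$ as assembled in Definitions~\ref{def:onewireMs}/\ref{def:twowireMs} is precisely $\left[\begin{smallmatrix}a&b\\ b&c\end{smallmatrix}\right]+\diag(M_1,p)$ (up to which summand is called $\M_1$ and which $\M_2$, and the relabelling $\Tilde{\Omega}_1\leftrightarrow\Tilde{\Omega}_2$ between the two geometric cases), we get
\[
\det(\M_1+\M_2)=(M_1+a)(c+p)-b^2=\Big(M_1+a-\tfrac{b^2}{c+p}\Big)(c+p)=(M_1+M_2)(\Tilde{M}_1+\Tilde{M}_2),
\]
which together with the first paragraph proves the theorem. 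The one genuine difficulty is the sign bookkeeping: one must check that the outward-normal conventions at $s_1$ and $s_2$ (relative to each of the three subgraphs) and the $C^1$-matching at $s_2$ inside $\Omega_2$ are exactly the conventions under which (i) the intermediate $\Tilde{M}_1+\Tilde{M}_2$ coming out of Theorem~\ref{thm:determinant_equivalence} is the object used above and (ii) $\M_1+\M_2$ from Definitions~\ref{def:onewireMs}/\ref{def:twowireMs} equals $\left[\begin{smallmatrix}a&b\\ b&c\end{smallmatrix}\right]+\diag(M_1,p)$; a single misplaced sign would break the telescoping in the last display. The one-wire and two-wire cases then run on identical algebra, differing only in which subgraph is the middle one — and hence which Evans function sits in the denominator.
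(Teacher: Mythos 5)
Your proposal is correct, and it splits the same way the paper's proof does: the first half (two applications of Theorem \ref{thm:determinant_equivalence} and cancellation of $E^{\Omega_2}_{\Gamma_2}$, read as an identity of meromorphic functions) is exactly the paper's derivation of \eqref{eq:250} and \eqref{eq:170}. Where you genuinely diverge is in proving $(M_1+M_2)(\Tilde{M}_1+\Tilde{M}_2)=|\M_1+\M_2|$. The paper handles the one-wire and two-wire geometries separately (Theorems \ref{thm:twosplitonewire} and \ref{thm:twosplittwowire}), expanding every entry of $\M_1+\M_2$ in Wronskians of the $\phi_i,\theta_i$ and in the four Dirichlet/Neumann Evans functions of the middle subgraph, re-decomposing $M_2$ via Theorem \ref{thm:determinant_equivalence} with a Neumann condition at $s_1$, and, in the two-wire case, proving the Pl\"ucker-type identity $E^{\Tilde{\Omega}_2}_{\Tilde{\Gamma}_2^{DD}}E^{\Tilde{\Omega}_2}_{\Tilde{\Gamma}_2^{NN}}-E^{\Tilde{\Omega}_2}_{\Tilde{\Gamma}_2^{ND}}E^{\Tilde{\Omega}_2}_{\Tilde{\Gamma}_2^{DN}}=B_1B_2$ by a block Laplace expansion of an auxiliary $(4n-4)\times(4n-4)$ matrix. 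Your Schur-complement gluing argument replaces all of that with one line of algebra and treats both geometries uniformly; the structural inputs you need — that $\M_1+\M_2$ is the middle subgraph's outward Dirichlet-to-Neumann map plus $\diag(M_1,p)$ — are precisely what the paper records in \eqref{eq:120}, \eqref{eq:124}, \eqref{eq:173}, and \eqref{eq:504}, and the $C^1$-matching step is legitimate because the $H^2(\Omega_2)$ solution is $C^1$ at the interior point $s_2$ and its value $g$ there is determined whenever $c+p=\Tilde{M}_1+\Tilde{M}_2\neq 0$, which suffices for an identity of meromorphic functions. Two minor remarks: the symmetry $b_{12}=b_{21}$ you invoke is never actually needed, since carrying $b_{12}b_{21}$ through both the determinant and the Schur complement gives the same cancellation (this sidesteps any Hermitian-versus-symmetric subtlety for nonreal $\lambda$); and the sign bookkeeping you flag is a real chore but is exactly the verification carried out in the paper's remarks, so it is a localized check rather than a gap.
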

Just as with Theorem \ref{thm:determinant_equivalence}, the equality in Theorem \ref{thm:twoCutEvans} is one of meromorphic functions, so the desired behavior is preserved. Also, we again obtain a corresponding eigenvalue counting theorem.
\begin{theorem}\label{thm:counting2d}
Let $\mathcal{N}_X(\lambda)$ count the number of eigenvalues less than or equal to $\lambda$ of any operator $X$, and let $N(\lambda)$ be the difference between the number of zeros and number of poles (including multiplicities) of $|\M_1+\M_2|$ less than or equal to $\lambda$. Then
\begin{equation}
\mathcal{N}_{H^\Omega_\Gamma}(\lambda)=\mathcal{N}_{H^{\Omega_1}_{\Gamma_1}}(\lambda)+\mathcal{N}_{H^{\Tilde{\Omega}_1}_{\Tilde{\Gamma}_1}}(\lambda)+\mathcal{N}_{H^{\Tilde{\Omega}_2}_{\Tilde{\Gamma}_2}}(\lambda)+N(\lambda).
\end{equation}
\end{theorem}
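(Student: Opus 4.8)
The plan is to derive Theorem~\ref{thm:counting2d} from the meromorphic identity of Theorem~\ref{thm:twoCutEvans} by an elementary accounting of orders of zeros and poles, in complete parallel with how Theorem~\ref{thm:counting} follows from Theorem~\ref{thm:determinant_equivalence}. For a function $h$ meromorphic near $\mu\in\CC$ and not identically zero, I write $\mathrm{ord}_\mu h$ for the order of its zero at $\mu$, so $\mathrm{ord}_\mu h<0$ when $h$ has a pole there and $\mathrm{ord}_\mu h=0$ when $\mu$ is neither a zero nor a pole of $h$. The first ingredient I would use is the property of the Evans functions established in the construction of Definition~\ref{def:evansfunction}: for each operator $X\in\{H^\Omega_\Gamma,\,H^{\Omega_1}_{\Gamma_1},\,H^{\Tilde{\Omega}_1}_{\Tilde{\Gamma}_1},\,H^{\Tilde{\Omega}_2}_{\Tilde{\Gamma}_2}\}$, the function $E^X$ is entire, vanishes exactly on $\sigma(X)$, and $\mathrm{ord}_\mu E^X$ equals the multiplicity of $\mu$ as an eigenvalue of $X$ (the geometric and algebraic multiplicities coinciding by self-adjointness). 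Since each such $X$ is self-adjoint with discrete spectrum bounded below, $E^X$ has only finitely many zeros in $(-\infty,\lambda]$, all real, and therefore
\begin{equation}
\mathcal{N}_X(\lambda)=\sum_{\mu\le\lambda}\mathrm{ord}_\mu E^X .
\end{equation}

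Next I would set $g(\lambda):=|(\M_1+\M_2)(\lambda)|$ and invoke Theorem~\ref{thm:twoCutEvans}, which gives
\begin{equation}
g = \frac{E^{\Omega}_{\Gamma}}{E^{\Omega_1}_{\Gamma_1}\,E^{\Tilde{\Omega}_1}_{\Tilde{\Gamma}_1}\,E^{\Tilde{\Omega}_2}_{\Tilde{\Gamma}_2}}
\end{equation}
as meromorphic functions on $\CC$. Since $\mathrm{ord}_\mu$ is additive over products and quotients, for every $\mu\in\CC$ one has
\begin{equation}
\mathrm{ord}_\mu g = \mathrm{ord}_\mu E^{\Omega}_{\Gamma} - \mathrm{ord}_\mu E^{\Omega_1}_{\Gamma_1} - \mathrm{ord}_\mu E^{\Tilde{\Omega}_1}_{\Tilde{\Gamma}_1} - \mathrm{ord}_\mu E^{\Tilde{\Omega}_2}_{\Tilde{\Gamma}_2}.
\end{equation}
In particular every pole of $g$ sits among the (real) zeros of the three subgraph Evans functions and every zero of $g$ among the (real) zeros of $E^\Omega_\Gamma$; hence all zeros and poles of $g$ are real, only finitely many lie in $(-\infty,\lambda]$, and by the definition of $N$ in the statement, $N(\lambda)=\sum_{\mu\le\lambda}\mathrm{ord}_\mu g$.

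To finish I would sum the displayed per-point order identity over all $\mu\le\lambda$ (a finite sum) and substitute $\mathcal{N}_X(\lambda)=\sum_{\mu\le\lambda}\mathrm{ord}_\mu E^X$ for each of the four operators, obtaining
\begin{equation}
N(\lambda)=\mathcal{N}_{H^\Omega_\Gamma}(\lambda)-\mathcal{N}_{H^{\Omega_1}_{\Gamma_1}}(\lambda)-\mathcal{N}_{H^{\Tilde{\Omega}_1}_{\Tilde{\Gamma}_1}}(\lambda)-\mathcal{N}_{H^{\Tilde{\Omega}_2}_{\Tilde{\Gamma}_2}}(\lambda),
\end{equation}
which rearranges into the claimed formula. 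The step I expect to require the most care is the bookkeeping at a point $\mu$ that is simultaneously an eigenvalue of $H^\Omega_\Gamma$ and of one or more of the subgraph operators: there $\mathrm{ord}_\mu g$ can be a positive, negative, or zero integer, and one must be confident that the per-point order identity absorbs all of these cases with the correct signs — I would sanity-check this against the one-cut statement of Theorem~\ref{thm:counting}. The only other thing worth confirming is that the Evans-function/multiplicity correspondence established for $H^\Omega_\Gamma$ applies verbatim to the subgraph operators $H^{\Omega_1}_{\Gamma_1}$, $H^{\Tilde{\Omega}_1}_{\Tilde{\Gamma}_1}$, $H^{\Tilde{\Omega}_2}_{\Tilde{\Gamma}_2}$, which it does since they are Schr\"odinger operators of the same form, merely on subgraphs carrying one additional Dirichlet vertex.
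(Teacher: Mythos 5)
Your proposal is correct and follows essentially the same route as the paper, which simply asserts that the counting formula "immediately follows" from the meromorphic identity of Theorem \ref{thm:twoCutEvans} together with the zero--eigenvalue correspondence for Evans functions; your write-up just makes the order-counting bookkeeping explicit. No gaps.
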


Finally, in Section 5, we illustrate some applications of the main theorems to quantum star graphs with potential barriers and wells.

\section{General Resolvent Formula}

\subsection{Motivation for Using the Resolvent}
While it is possible to construct Dirichlet-to-Neumann maps column-by-column by solving several boundary-value problems, this method does not do much to illuminate the processes by which the zeros and poles containing spectral information appear in the map. Provided that $\lambda\in \rho(H^\Omega_\Gamma)$, this map-making consists of finding functions $\vu_{\Gamma,i}\in H^2(\Omega)$ which serve as solutions to the problem:
\begin{equation} \label{eq:8}
\begin{cases}
(H^\Omega-\lambda)\vu_{\Gamma,i} = \Vec{0} \,,\\
\gamma_\Gamma \vu_{\Gamma,i} = \Vec{e}_i,
\end{cases}
\end{equation}
where $\Vec{e}_i$ is the $i$th standard basis vector for $\CC^{2n}$, then applying the desired trace to such a solution. Note that we will have $\Vec{e}_i$ representing standard basis vectors for $\CC^{n}$ and $\CC^{2n}$ interchangeably, since the dimension will be clear from context. Instead of solving for $\vu_{\Gamma,i}$ by its defining boundary-value problem, we will construct it using the resolvent of $H^\Omega_\Gamma$. 

For $\lambda\in \rho(H^{\Omega}_\Gamma)$, a key property of the resolvent $R_\lambda^\Gamma$ of $H^{\Omega}_\Gamma$ is that, for $\lambda\in \rho(H^{\Omega}_\Gamma)$, it acts as the inverse operator of $H^\Omega_\Gamma-\lambda$. That is,
\begin{equation}\label{eq:150}
R_\lambda^\Gamma(H^\Omega_\Gamma-\lambda )\vv = (H^\Omega_\Gamma-\lambda)R_\lambda^\Gamma\vv = \vv,
\end{equation}
for any function $\vv$ on $\Omega$.

Equation \eqref{eq:150} actually gives us a way to find $R_\lambda^\Gamma \vv$ for an arbitrary $L^2(\Omega)$ function $\vv$ by solving the inhomogeneous boundary-value problem:
\begin{equation} \label{eq:resolvent_def}
\begin{cases}
(H^\Omega_\Gamma - \lambda)\vy = \vv \,,
\\ \gamma_{\Gamma}\vy = \Vec{0}.
\end{cases}
\end{equation}
The unique solution $\vy$ to this problem must be the resolvent. We will solve for $R_\lambda^\Gamma\vv$ by adding together a complementary and particular solution. Normally, we could generate a particular solution via the method of Variation of Parameters, but since each component of a function on $\Omega$ has a different spatial variable, we need to make some slight adjustments to the method.

\subsection{Complementary Homogeneous Solution}
We will start by obtaining a complementary solution. First, suppose $\lambda\in \rho(H^\Omega_\Gamma)$. Next, let the entries of $\alpha_1$ and $\alpha_2$ be named as follows:
\begin{equation} \label{eq:9}
\alpha_1 = \begin{bmatrix}
a_{1,1} & \hdots & a_{1,n}\\
\vdots & \ddots & \vdots\\
a_{n,1} & \hdots & a_{n,n}
\end{bmatrix} \hspace{.5in} \alpha_2 = \begin{bmatrix}
b_{1,1} & \hdots & b_{1,n}\\
\vdots & \ddots & \vdots\\
b_{n,1} & \hdots & b_{n,n}
\end{bmatrix}\,,
\end{equation}
and similarly let the nonzero diagonal entries of $\beta_1$ and $\beta_2$ be named
\begin{equation} \label{eq:11}
\beta_1 = \diag (g_1,...,g_n) \hspace{.5in} \beta_2 = \diag (h_1,...,h_n)\,.
\end{equation}
Consider the matrix $\begin{bmatrix}
-\alpha_2^* \\ \alpha_1^*
\end{bmatrix}$. We can use each one of the columns as initial conditions for complementary homogeneous problems to get solutions $\vy_i$ which solve:
\begin{equation}\label{eq:10}
\begin{cases}
(H^\Omega-\lambda) \vy_i = \Vec{0}\,,\\
y_{i,j}(0) = -\overline{b_{i,j}} \text{ for $1\leq j\leq n$}\,,\\
y'_{i,j}(0) = \overline{a_{i,j}} \text{ for $1\leq j\leq n$}\,,
\end{cases}
\end{equation}
where $y_{i,j}$ is the $j$th component of $\vy_i$ and the overline represents complex conjugation. Observe that each of these $n$ solutions satisfies $\Gamma$ boundary conditions at the origin. We denote the set of all $\vy_i$ as $\mathcal{Y}$.

Similarly, we can define another $n$ solutions by using the columns of $\begin{bmatrix}
-\beta_2^* \\ \beta_1^*
\end{bmatrix}$ as initial conditions for homogeneous problems to generate solutions $\vz_{i}$ which satisfy:
\begin{equation}\label{eq:12}
\begin{cases}
(H^\Omega-\lambda)\vz_i=\Vec{0}\, ,\\ 
\vz_i(\Vec{\ell}) = -\overline{h}_i\Vec{e}_i\, ,\\
\vz_i'(\Vec{\ell}) = \overline{g_{i}}\Vec{e}_i\, .
\end{cases}
\end{equation}
First, note that these solutions $\vz_{i}$ satisfy $\Gamma$ boundary conditions at the outer endpoints of the quantum graph. Also, we have that $z_{i,j} = 0$ for all $j\neq i$ due to the zero conditions. We denote the set of all $\vz_i$ as $\mathcal{Z}$.

Due to the rank conditions of the $\alpha$ and $\beta$ matrices imposed in equation \eqref{eq:3}, we have that $\mathcal{Y}$ is a linearly independent set and $\mathcal{Z}$ is a linearly independent set. Additionally, it can bee seen that the combined set $\mathcal{Y}\cup \mathcal{Z}$ is linearly dependent if and only if $\lambda\in \sigma(H^{\Omega}_{\Gamma})$. Thus, since $\lambda\in \rho(L_\Gamma)$, we must have that $\mathcal{Y}\cup \mathcal{Z}$ forms a linearly independent set of $2n$ functions.
\begin{definition}\label{def:columnorder}
Let $Y$ and $Z$ be $n\times n$ matrices such that the $i$th column of $Y$ is $\Vec{y}_i$ and the $i$th column of $Z$ is $\Vec{z}_i$. Observe that the ordering of these columns implies that
\begin{equation}\label{eq:151}
\begin{bmatrix}
Y(\Vec{0},\lambda)\\
Y'(\Vec{0},\lambda)
\end{bmatrix} = \begin{bmatrix}
-\alpha_2^*\\
\alpha_1^*
\end{bmatrix} \hspace{.5cm}\text{and}\hspace{.5cm} \begin{bmatrix}
Z(\Vec{\ell},\lambda)\\
Z'(\Vec{\ell},\lambda)
\end{bmatrix} = \begin{bmatrix}
-\beta_2^*\\
\beta_1^*
\end{bmatrix}.
\end{equation}
\end{definition}

\begin{definition}\label{def:evansfunction}
In this paper, we will define \textit{the} Evans function for $H^\Omega_\Gamma$ (denoted $E^\Omega_\Gamma$) as the determinant of the following fundamental solution matrix:
\begin{equation}\label{eq:152}
F(\vx,\lambda) := \begin{bmatrix}
Y(\vx,\lambda) & Z(\vx,\lambda)\\
Y'(\vx,\lambda) & Z'(\vx,\lambda)
\end{bmatrix}.
\end{equation}
That is, $E^\Omega_{\Gamma}(\lambda)=|F(\vx,\lambda)|$. The Evans function only depends on $\lambda$ due to Theorem \ref{thm:Abel}.
\end{definition}

This definition also extends to the operators defined over subgraphs like $H^{\Omega_1}_{\Gamma_1}$ and $H^{\Omega_2}_{\Gamma_2}$; we simply alter the initial value problems defining $Y$ and $Z$ to incorporate $s_1$ as reflected in the new boundary conditions.

\subsection{Particular Solution}
We wish to emulate Variation of Parameters so as to obtain a particular solution from our complementary solution set. We cannot do normal Variation of Parameters on the whole set of complementary solutions since each solution has multiple spatial variables, but we can do Variation of Parameters for each edge. So, we construct a particular solution $\vy_p$ component by component. The $j$th component of the particular solution $\vy_p$ can be built by looking at all the $y_{i,j}$ and $z_{i,j}$ (we call such components subvectors), finding a linearly independent pair, and using Variation of Parameters on them to find a particular solution to the one-dimensional second order problem:
\begin{equation}\label{eq:13}
(-\frac{\partial^2}{\partial x_i^2}+V_i-\lambda)u = v_i\hspace{.75in} x_i\in (0,\ell_i)\, .
\end{equation}

\begin{theorem}\label{thm:independent_subvectors}
Fix $\lambda\in \rho(H^{\Omega}_\Gamma)$. Consider the sets of functions $\mathcal{Y}$ and $\mathcal{Z}$ made up of solutions from equations \eqref{eq:10} and \eqref{eq:12}. For all $1\leq j\leq n$, there exists at least one index $1\leq \tau_i\leq n$ such that $y_{\tau_j,j}$ is linearly independent from $z_{j,j}$.
\end{theorem}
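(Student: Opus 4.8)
The plan is to argue by contradiction: suppose that for some fixed index $j$, the subvector $z_{j,j}$ is linearly dependent with $y_{i,j}$ for \emph{every} $i$, $1\le i\le n$. Since $z_{j,j}$ and all the $y_{i,j}$ solve the same scalar equation $(-\partial_{x_j}^2+V_j-\lambda)u=0$ on $(0,\ell_j)$, linear dependence of $z_{j,j}$ with each $y_{i,j}$ forces each $y_{i,j}$ to be a scalar multiple of $z_{j,j}$; in particular, if $z_{j,j}\not\equiv 0$, then the whole $j$th row $(y_{1,j},\dots,y_{n,j})$ of the matrix $Y(\cdot,\lambda)$ lies in the one-dimensional span of $z_{j,j}$. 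The first thing I would do is dispose of the degenerate case $z_{j,j}\equiv 0$: by the initial conditions \eqref{eq:12}, $z_{j,j}(\ell_j)=-\overline{h_j}$ and $z_{j,j}'(\ell_j)=\overline{g_j}$, and since $g_j,h_j$ cannot both vanish (else the $j$th row of $[\beta_1\ \beta_2]$ is zero, violating the rank condition in \eqref{eq:3}), $z_{j,j}$ is a nontrivial solution, hence not identically zero by uniqueness for the scalar ODE.

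So we are in the case that every $y_{i,j}=c_i\, z_{j,j}$ for scalars $c_i$, i.e. the $j$th row of $Y$ is $z_{j,j}$ times the row vector $\vec c^{\,t}=(c_1,\dots,c_n)$. Evaluating at $x_j=0$ and using \eqref{eq:151}, the $j$th row of $-\alpha_2^*$ equals $z_{j,j}(0)\,\vec c^{\,t}$ and the $j$th row of $\alpha_1^*$ equals $z_{j,j}'(0)\,\vec c^{\,t}$. Thus the $j$th rows of both $\alpha_1^*$ and $\alpha_2^*$ are scalar multiples of the single vector $\vec c^{\,t}$ — equivalently, the $j$th columns of $\alpha_1$ and $\alpha_2$ both lie in $\mathrm{span}\{\overline{\vec c}\}$. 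I would then want to promote this from "one row" to a genuine contradiction. The cleanest route is to observe that the conclusion of the theorem, restated, is that for each $j$ there is \emph{some} $i$ with $y_{i,j}$ independent of $z_{j,j}$; failure of this for a \emph{single} $j$ already gives that the $j$th rows of $\alpha_1^*,\alpha_2^*$ are parallel. Combined with the ``Wronskian-type'' nondegeneracy: since $\mathcal Y\cup\mathcal Z$ is linearly independent (as stated, because $\lambda\in\rho(H^\Omega_\Gamma)$), the matrix $F(\vx,\lambda)$ in \eqref{eq:152} has nonzero determinant, so in particular its rows are linearly independent. But if the $j$th row of $Y$ is $z_{j,j}\vec c^{\,t}$ and (since $z_{i,j}=0$ for $i\ne j$) the $j$th row of $Z$ is $z_{j,j}\vec e_j^{\,t}$, then in the block matrix $F$ the $j$th row of the top block and the $j$th row of the $Z$-block are \emph{proportional as functions} only after we also control $Y'$; so more carefully I would look at the two rows of $F$ indexed by ``$j$ in the $Y$-part'' — namely $(y_{1,j},\dots,y_{n,j},z_{1,j},\dots,z_{n,j})=z_{j,j}(c_1,\dots,c_n,0,\dots,0,\underbrace{1}_{j},0,\dots)$ evaluated together with its derivative row $z_{j,j}'(\cdots)$. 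Evaluating the whole identity $y_{i,j}=c_iz_{j,j}$ together with its $x_j$-derivative at any point shows the $2n$-vector formed by the $j$th row of $[Y\ Z]$ and the $j$th row of $[Y'\ Z']$ are scalar multiples of one another at each point, which contradicts $|F|\ne 0$. This is the step I expect to require the most care in bookkeeping — matching the block structure of $F$ with the scalar proportionality and making sure the derivative rows don't secretly provide independence.

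An alternative, perhaps slicker, finish that avoids fussing with derivative rows: from $y_{i,j}=c_iz_{j,j}$ for all $i$ and the initial data \eqref{eq:10}, we get $-\overline{b_{i,j}}=y_{i,j}(0)=c_i z_{j,j}(0)$ and $\overline{a_{i,j}}=y_{i,j}'(0)=c_i z_{j,j}'(0)$ for all $i$. Hence the column vectors $(\overline{a_{1,j}},\dots,\overline{a_{n,j}})^t$ and $(\overline{b_{1,j}},\dots,\overline{b_{n,j}})^t$ — that is, the $j$th columns of $\overline{\alpha_1}$ and $\overline{\alpha_2}$ — are both scalar multiples of $(c_1,\dots,c_n)^t$, so the $j$th column of the $n\times 2n$ matrix $[\alpha_1\ \alpha_2]$ (as a column of that block matrix, picking out the $j$th entry of every row) is constrained. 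I would then combine this with the self-adjointness relation $\alpha_1\alpha_2^*=\alpha_2\alpha_1^*$ and the full-rank condition to derive that a nontrivial solution of the \emph{homogeneous} $\Gamma$-problem exists, i.e. $\lambda\in\sigma(H^\Omega_\Gamma)$, contradicting $\lambda\in\rho(H^\Omega_\Gamma)$. Concretely: the function $\vz_j$ satisfies $\Gamma$ conditions at $\vec\ell$ by construction, and the relations above say $\vz_j$ also satisfies the origin conditions $\alpha_1\vz_j(\vec 0)+\alpha_2\vz_j'(\vec 0)=\vec 0$ because $\vz_j(\vec 0)=z_{j,j}(0)\vec e_j$, $\vz_j'(\vec 0)=z_{j,j}'(0)\vec e_j$, and the $j$th columns of $\alpha_1,\alpha_2$ annihilate this combination precisely when $z_{j,j}(0)\,(\alpha_1)_{\cdot j}+z_{j,j}'(0)\,(\alpha_2)_{\cdot j}=\vec 0$ — which is exactly the parallelism we derived, evaluated correctly. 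Then $\vz_j\in\dom(H^\Omega_\Gamma)$ is a nonzero eigenfunction, contradiction. I would present whichever of these two endgames turns out to need fewer case distinctions; my guess is the second one (reducing to $\lambda\in\sigma(H^\Omega_\Gamma)$) is cleanest since the linear-dependence criterion for $\mathcal Y\cup\mathcal Z$ is already available in the text.
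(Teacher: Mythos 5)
Your contradiction setup (assume $y_{i,j}=c_i z_{j,j}$ for every $i$, after ruling out $z_{j,j}\equiv 0$ via the rank condition on $\begin{bmatrix}\beta_1 & \beta_2\end{bmatrix}$) is the same as the paper's, and your \emph{first} endgame is correct: under the hypothesis, the $j$th row of $\begin{bmatrix}Y & Z\end{bmatrix}$ is $z_{j,j}(x_j)$ times the fixed vector $(c_1,\dots,c_n,0,\dots,0,1,0,\dots,0)$ (the $1$ in slot $n+j$), and the $j$th row of $\begin{bmatrix}Y' & Z'\end{bmatrix}$ is $z_{j,j}'(x_j)$ times the same vector, so these two rows of $F$ span at most a one-dimensional space and $|F|=E^{\Omega}_\Gamma(\lambda)=0$, contradicting $\lambda\in\rho(H^{\Omega}_\Gamma)$. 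This is a clean variant of the paper's finish, which instead replaces each $\vy_i$ by $\vy_i-\chi_i\vz_j$, deletes the identically vanishing $j$th components, and obtains $2n-1$ linearly independent solutions of an $(n-1)$-dimensional second-order system, one more than the solution space allows. The two finishes are essentially equivalent in content; yours requires no case distinctions and no "fussing with derivative rows" beyond what you already wrote.

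The \emph{second} endgame --- the one you say you would probably present --- has a genuine gap. From $y_{i,j}=c_iz_{j,j}$ and \eqref{eq:10} you correctly obtain $a_{i,j}=\overline{c_i}\,\overline{z_{j,j}'(0)}$ and $b_{i,j}=-\overline{c_i}\,\overline{z_{j,j}(0)}$, but then the $i$th entry of $\alpha_1\vz_j(\Vec{0})+\alpha_2\vz_j'(\Vec{0})$ is
\begin{equation}
a_{i,j}z_{j,j}(0)+b_{i,j}z_{j,j}'(0)=\overline{c_i}\left(z_{j,j}(0)\overline{z_{j,j}'(0)}-z_{j,j}'(0)\overline{z_{j,j}(0)}\right),
\end{equation}
and the bracketed quantity need not vanish: it is $W(0)$ for $W(x)=z_{j,j}\overline{z_{j,j}'}-z_{j,j}'\overline{z_{j,j}}$, which satisfies $W'=(\lambda-\overline{\lambda})|z_{j,j}|^2$ and $W(\ell_j)=0$ (the latter by $\beta_1\beta_2^*=\beta_2\beta_1^*$), hence $W(0)=-2i\,\mathrm{Im}(\lambda)\int_0^{\ell_j}|z_{j,j}|^2$. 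For non-real $\lambda\in\rho(H^{\Omega}_\Gamma)$ --- precisely where the resolvent construction is most needed --- this is nonzero whenever $(c_1,\dots,c_n)\neq\Vec{0}$, so $\vz_j$ does \emph{not} satisfy the origin conditions and no eigenfunction is produced. The point is that the $j$th columns of $\alpha_1,\alpha_2$ being parallel to $\overline{(c_1,\dots,c_n)}$ is not the same as their annihilating the vectors $z_{j,j}(0)\Vec{e}_j$ and $z_{j,j}'(0)\Vec{e}_j$ in combination. So present the first endgame (or the paper's dimension count) and discard the second, or restrict it to the subcases $\mathrm{Im}\,\lambda=0$ and $(c_1,\dots,c_n)=\Vec{0}$ where it does work.
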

\begin{proof}
Suppose for contradiction we could not find another linearly independent function for some index $j$. Then there exist constants $\chi_i$ such that $y_{i,j} = \chi_iz_{j,j}$ for all $i$. Since $\mathcal{Y}\cup \mathcal{Z}$ is linearly independent, the set $S=\mathcal{Z}\cup\{\vy_i-\chi_i\vz_j: \vy_i\in \mathcal{Y}\}$ is also a linearly independent set. We can construct a new set of functions $S'$ by removing the $j$th component of every function in $S\setminus \{\vz_j\}$. Since the $j$th component for every vector in $S\setminus \{\vz_j\}$ is $0$, we lose no information by deleting them, so $S'$ is a linearly independent set of size $2n-1$ containing vectors with $n-1$ components. But this cannot be possible, since the solutions in $S'$ are linearly independent solutions to an $(n-1)$-dimensional second order ODE, of which there should be at most $2n-2$. Thus, there must be some $y_{\tau_j,j}$ meeting our desired specifications.
\end{proof}
This proof shows that our componentwise construction of $\vy_p$ is well defined for all $\lambda\in\rho(H^\Omega_\Gamma)$. We also see in the proof that the choice of each $y_{\tau_j,j}$ is local with respect to $\lambda$. Thus, for specific $\lambda$ we can use Variation of Parameters to define the $i$th entry of $\vy_p$ as
\begin{align}\label{eq:particular}
(\vy_p)_i(x_i,\lambda) :=& -
\frac{1}{D_i(\lambda)}(y_{\tau_i, i}(x_i,\lambda) \int_{x_i}^{\ell_i} v_i(t_i)z_{i,i}(t_i,\lambda)\,dt_i 
\nonumber\\&+ z_{i, i}(x_i,\lambda)\int_0^{x_i}v_i(t_i)y_{\tau_i,i}(t_i,\lambda) \,dt_i)
 \, ,
\end{align}
where $D_i:=W_0(y_{\tau_i,i},z_{i,i})$ is a Wronskian for solutions on the $i$th wire.

\subsection{Constant Tuning}
Let $S_0 = \begin{bmatrix}
Y & Z
\end{bmatrix}$. Then our general solution (which is $R_\lambda^\Gamma\vv)$ will be of the form
\begin{equation}\label{eq:14}
\vu = S_0\vc + \vy_p \, ,
\end{equation}
where $\vc$ is a vector of scalars in $\CC^{2n}$. To make sure $\vu$ satisfies the $\Gamma$ boundary conditions, we will need to adjust the components of $\vc$ so that
\begin{equation}\label{eq:15}
\gamma_\Gamma (S_0\vc) = -\gamma_\Gamma \vy_p \, .
\end{equation}

By the linearity of $\gamma_\Gamma$, we can modify equation \eqref{eq:15} by taking the trace of every column of $S_0$, arranging these in a matrix $\Tilde{S}_0$, and then multiplying this matrix by $\vc$. We define $\Tilde{S}_0$ as
\begin{align}\label{eq:16}
\Tilde{S}_0(\lambda) &= \begin{bmatrix}
\beta_1 S_0(\Vec{\ell},\lambda) + \beta_2S_0'(\Vec{\ell},\lambda)
\\
\alpha_1 S_0(\Vec{0},\lambda) + \alpha_2 S_0'(\Vec{0},\lambda)
\end{bmatrix} 
\nonumber\\&= \begin{bmatrix}
\beta_1 Y(\Vec{\ell},\lambda) + \beta_2 Y'(\Vec{\ell},\lambda) &\beta_1 Z(\Vec{\ell},\lambda) + \beta_2 Z'(\Vec{\ell},\lambda)
\\
\alpha_1 Y(\Vec{0},\lambda) + \alpha_2 Y'(\Vec{0},\lambda) & 
\alpha_1 Z(\Vec{0},\lambda) + \alpha_2 Z'(\Vec{0},\lambda)
\end{bmatrix} \nonumber \\
&= \begin{bmatrix}
\beta_1 Y(\Vec{\ell},\lambda) + \beta_2 Y'(\Vec{\ell},\lambda) & 0 
\\
0 & 
\alpha_1 Z(\Vec{0},\lambda) + \alpha_2 Z'(\Vec{0},\lambda)
\end{bmatrix}\, .
\end{align}
The zero blocks in \eqref{eq:16} appear since $Y$ and $Z$ are defined to satisfy $\alpha$ and $\beta$ conditions, respectively. We can rewrite equation $\eqref{eq:15}$ as
\begin{equation}\label{eq:17}
\Tilde{S}_0(\lambda)\vc = -\gamma_\Gamma \vy_p \, ,
\end{equation}
Due to the integral bounds we chose, we find that
\begin{equation}\label{eq:18}
\gamma_\Gamma \vy_p = \begin{bmatrix}
\Vec{0}\\ \alpha_1 \vy_p(\Vec{0},\lambda)+ \alpha_2 \vy_p'(\Vec{0},\lambda)
\end{bmatrix} \, .
\end{equation}

Equations \eqref{eq:17} and \eqref{eq:18} imply that we can always choose $c_1=c_2=...=c_n=0$, and thus we can determine $c_{n+i}$ for $1\leq i\leq n$ by solving the smaller system:
\begin{equation}\label{eq:19}
C(\lambda) \begin{bmatrix}
c_{n+1}\\\vdots\\c_{2n}
\end{bmatrix} = -\alpha_1 \vy_p(\Vec{0},\lambda)- \alpha_2 \vy_p'(\Vec{0},\lambda) \, ,
\end{equation}
where $C(\lambda)$ is given by:
\begin{equation}\label{eq:20}
C(\lambda) = 
\alpha_1 Z(\Vec{0},\lambda) + \alpha_2 Z'(\Vec{0},\lambda) \, .
\end{equation}
That is, $C(\lambda)$ is the lower right block of $\Tilde{S}_0$. We can find an exact formula for $C(\lambda)$. First, we give names to the various $\vz_i$ at the origin:
\begin{align}
-s_i(\lambda) &= z_{i,i}(0,\lambda)\nonumber \, ,\\
r_i(\lambda) &= z_{i,i}'(0,\lambda) \, .
\end{align}
Then from equation \eqref{eq:20}, $C(\lambda)$ is given by
\begin{align}\label{eq:22}
C(\lambda) &= \begin{bmatrix}
\alpha_1 & \alpha_2
\end{bmatrix} \begin{bmatrix}
\diag(-s_1(\lambda),...,-s_n(\lambda) )\\
\diag(r_1(\lambda),...,r_n(\lambda) )
\end{bmatrix}
\nonumber\\&= \begin{bmatrix}
\begin{vmatrix}
-s_1(\lambda) & -b_{1,1}\\
r_1(\lambda) & a_{1,1}
\end{vmatrix} &\hdots & \begin{vmatrix}
-s_n(\lambda) & -b_{1,n}\\
r_n(\lambda) & a_{1,n}
\end{vmatrix}\\
\vdots & \ddots & \vdots \\
\begin{vmatrix}
-s_1(\lambda) & -b_{n,1}\\
r_1(\lambda) & a_{n,1}
\end{vmatrix} & \hdots& \begin{vmatrix}
-s_n(\lambda) & -b_{n,n}\\
r_n(\lambda) & a_{n,n}
\end{vmatrix}
\end{bmatrix} \, .
\end{align}
If $C(\lambda)$ is invertible then we can solve for the coefficients $c_{n+1},...,c_{2n}$ by inversion in equation \eqref{eq:19}. The invertibility is guaranteed by the following Lemma.
\begin{lemma}\label{thm:C_and_fundamental}
Let $A$ and $B$ be $n\times n$ matrices, $C = \alpha_1 A + \alpha_2 B$, where $\alpha_i$ are defined in \eqref{eq:3}, and 
\[
F = \begin{bmatrix}
-\alpha_2^* & A\\
\alpha_1^* & B
\end{bmatrix}.
\]
Then,
\[
 (-1)^n|F|=|C| \, .
\]
\end{lemma}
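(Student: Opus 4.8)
The plan is to exploit the block structure of $F$ together with the defining self-adjointness relation $\alpha_1\alpha_2^* = \alpha_2\alpha_1^*$. First I would observe that the claim is a determinant identity that is polynomial (indeed multilinear) in the entries of $A$ and $B$, so it suffices to verify it on a dense/generic set; in particular we may assume $\alpha_2$ is invertible and handle the general case by a continuity/density argument at the end. (One must check that such perturbations of $\alpha_1,\alpha_2$ preserving the rank and symmetry conditions are available; the symmetry relation cuts out a variety on which the invertible-$\alpha_2$ locus is dense, which is the one genuinely fussy point.)

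Assuming $\alpha_2$ invertible, the strategy is a block row/column reduction. Write
\begin{equation}
F = \begin{bmatrix} -\alpha_2^* & A \\ \alpha_1^* & B \end{bmatrix},
\end{equation}
and left-multiply by the block-triangular matrix $\begin{bmatrix} I & 0 \\ \alpha_1^*(\alpha_2^*)^{-1} & I \end{bmatrix}$, which has determinant $1$. This clears the lower-left block, since $\alpha_1^*(\alpha_2^*)^{-1}(-\alpha_2^*) + \alpha_1^* = 0$, and turns the lower-right block into $B + \alpha_1^*(\alpha_2^*)^{-1}A$. Hence $|F| = |-\alpha_2^*|\cdot |B + \alpha_1^*(\alpha_2^*)^{-1}A| = (-1)^n|\alpha_2^*|\,|B + \alpha_1^*(\alpha_2^*)^{-1}A|$. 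On the other hand $C = \alpha_1 A + \alpha_2 B = \alpha_2(\alpha_2^{-1}\alpha_1 A + B)$, so $|C| = |\alpha_2|\,|\alpha_2^{-1}\alpha_1 A + B|$.

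The final step is to reconcile the two middle factors. Taking adjoints inside a determinant only conjugates it, so I would instead note that the symmetry hypothesis $\alpha_1\alpha_2^* = \alpha_2\alpha_1^*$ gives, upon left-multiplying by $\alpha_2^{-1}$ and right-multiplying by $(\alpha_2^*)^{-1}$, the identity $\alpha_2^{-1}\alpha_1 = \alpha_1^*(\alpha_2^*)^{-1}$. Therefore $B + \alpha_1^*(\alpha_2^*)^{-1}A = B + \alpha_2^{-1}\alpha_1 A$, the two middle determinants coincide, and combined with $|\alpha_2^*| = \overline{|\alpha_2|}$ — which we can avoid entirely by keeping $\alpha_2^*$ throughout, writing $|C| = (-1)^n(-1)^n\cdots$; more cleanly, do the same column reduction on $C$ so that no adjoint ever appears — we get $(-1)^n|F| = |C|$. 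The main obstacle I anticipate is not any single computation but making the reduction to invertible $\alpha_2$ rigorous: one needs that the symmetric-pair condition \eqref{eq:3} is preserved under a suitable approximation, or alternatively to run the Schur-complement argument without inverting $\alpha_2$ by instead using a maximal invertible submatrix and a block permutation, which is messier but avoids the density argument. I would try the density argument first and fall back to the rank-based block decomposition if the approximation turns out to be delicate.
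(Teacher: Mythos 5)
Your route is genuinely different from the paper's, and your core block computation is correct. The paper never invokes a density argument: it uses the rank condition in \eqref{eq:3} to swap $k$ columns between $\alpha_1$ and $\alpha_2$ so that the resulting $\Tilde{\alpha}_2$ is invertible, swaps the corresponding rows of $F$, and then left-multiplies by the block-triangular matrix $J=\begin{bmatrix} I_n & 0\\ \Tilde{\alpha}_1 & \Tilde{\alpha}_2\end{bmatrix}$; the relation $\alpha_1\alpha_2^*=\alpha_2\alpha_1^*$ then annihilates the lower-left block of the product. This is precisely the ``maximal invertible submatrix plus block permutation'' fallback you describe, so the paper's argument is the version of yours that avoids any approximation. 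Your primary version is also workable: the rotation $(\alpha_1,\alpha_2)\mapsto(\alpha_1\cos\theta-\alpha_2\sin\theta,\ \alpha_1\sin\theta+\alpha_2\cos\theta)$ preserves both conditions in \eqref{eq:3}, and since $\alpha_1-i\alpha_2$ is invertible under \eqref{eq:3} the rotated second matrix is invertible for all but finitely many $\theta$; both sides of the identity are continuous in $(\alpha_1,\alpha_2)$, so the density step closes.

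The genuine gap is your final step, and it cannot be repaired by bookkeeping. Your own computation gives $(-1)^n|F|=\overline{\det\alpha_2}\cdot|B+\alpha_2^{-1}\alpha_1A|$ while $|C|=\det\alpha_2\cdot|B+\alpha_2^{-1}\alpha_1A|$, so the two sides differ by the unimodular factor $\overline{\det\alpha_2}/\det\alpha_2$. ``Keeping $\alpha_2^*$ throughout'' or redoing the reduction on $C$ does not remove this, because $F$ genuinely contains $\alpha_2^*$ while $C$ genuinely contains $\alpha_2$. Indeed the identity as stated fails for $n=1$, $\alpha_1=0$, $\alpha_2=i$, which satisfies \eqref{eq:3}: there $|F|=iB=|C|$, so $(-1)^1|F|=-|C|$. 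The honest output of your argument is $(-1)^n|F|=\bigl(\overline{\det\alpha_2}/\det\alpha_2\bigr)|C|$, which reduces to the stated identity exactly when $\det\alpha_2$ (or $\det\Tilde{\alpha}_2$ in the paper's normalization) is real, e.g.\ for real boundary matrices. You should know that the paper's proof commits the same conflation at equation \eqref{eq:58}, where $|\Hat{\alpha}_2|$ actually equals $(-1)^{n-k}|\Tilde{\alpha}_2^*|=(-1)^{n-k}\overline{|\Tilde{\alpha}_2|}$ rather than $(-1)^{n-k}|\Tilde{\alpha}_2|$. Because the lemma is only used to locate the zeros of $|C|$ and to form quotients in which this constant cancels, none of the paper's downstream results are affected; but as a freestanding determinant identity, both your argument and the paper's establish equality only up to this unimodular constant.
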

\begin{proof}
We know from equation \eqref{eq:3} that there exists a set of $n$ linearly independent columns in the matrix $\begin{bmatrix}
\alpha_1 & \alpha_2
\end{bmatrix}$. Thus, by swapping columns between $\alpha_1$ and $\alpha_2$, we can generate a new block matrix $\begin{bmatrix}
\Tilde{\alpha}_1 & \Tilde{\alpha}_2
\end{bmatrix}$ such that $\text{rank} (\Tilde{\alpha}_2) = n$. Let $k\in \{1,2,...,n\}$ represent the number of columns that we swap between $\alpha_1$ and $\alpha_2$ to generate these new matrices (although there may be different $k$ for different column swapping configurations). Now, for every pair of columns swapped, swap the rows with the same indices in $F$ to generate a new matrix $\Hat{F}$. We immediately observe that
\begin{equation}\label{eq:56}
|\Hat{F}|=(-1)^k|F|.
\end{equation}
We label the blocks of $\Hat{F}$ as follows:
\begin{equation}\label{eq:57}
\Hat{F} = \begin{bmatrix}
\Hat{\alpha}_2 & \Hat{A}\\
\Hat{\alpha}_1 & \Hat{B}
\end{bmatrix},
\end{equation}
and observe that
\begin{equation}\label{eq:58}
|\Hat{\alpha}_2| = (-1)^{n-k}|\Tilde{\alpha}_2|,
\end{equation}
since the transformation from $-\alpha_2^*$ to $\Hat{\alpha}_2$ involves swapping away $k$ of the $n$ rows, each of which carries away the added negative from the initial scaling of $\alpha_2^*$.

Note that, since matrix multiplication is defined in terms of vector inner products, by switching rows and columns of the same index to get $\Hat{F}$ and $\begin{bmatrix}
\Tilde{\alpha}_1 & \Tilde{\alpha}_2
\end{bmatrix}$, respectively, we preserve the values of the inner products defining the entries of the matrix product between $\begin{bmatrix}
\alpha_1 & \alpha_2
\end{bmatrix}$ and $F$.

The final object we need for this proof is the invertible block triangular matrix $J$ defined as
\begin{equation}\label{eq:60}
J = \begin{bmatrix}
I_n & 0\\
\Tilde{\alpha}_1 & \Tilde{\alpha}_2
\end{bmatrix},
\end{equation}
with determinant $|J| = |\Tilde{\alpha}_2|$, which is nonzero since we specifically constructed $\Tilde{\alpha}_2$ to consist of $n$ linearly independent columns. We can calculate the product $J\hat{F}$ as
\begin{align}
\label{eq:91}
J\hat{F} = \begin{bmatrix}
I_n & 0\\
\Tilde{\alpha}_1 & \Tilde{\alpha}_2
\end{bmatrix} \begin{bmatrix}
\hat{\alpha}_2 & \hat{A}\\
\hat{\alpha}_1 & \hat{B}
\end{bmatrix} &= \begin{bmatrix}
\hat{\alpha}_2 & \hat{A}\\
\Tilde{\alpha}_1\hat{\alpha}_2+\Tilde{\alpha}_2\hat{\alpha}_1 & \Tilde{\alpha}_1\hat{A}+\Tilde{\alpha}_2\hat{B} 
\end{bmatrix}\nonumber\\
&= \begin{bmatrix}
\hat{\alpha}_2 & \hat{A}\\
-\alpha_1\alpha_2^*+\alpha_2\alpha_1^* & \alpha_1 A+\alpha_2 B 
\end{bmatrix}\nonumber\\
&\myeq{\eqref{eq:3}}\,\,\, \begin{bmatrix}
\hat{\alpha}_2 & \hat{A}\\
0 & C
\end{bmatrix}.
\end{align}
So, on the one hand, we have that
\begin{equation}\label{eq:68}
|J\Hat{F}| = |\Hat{\alpha}_2||C|\,\,\myeq{\eqref{eq:58}} \,  (-1)^{n-k}|\Tilde{\alpha}_2||C| \, ,
\end{equation}
while on the other hand, we have
\begin{equation}\label{eq:65}
|J\Hat{F}| = |J||\Hat{F}|\,\, \myeq{\eqref{eq:56}}\,\, |\Tilde{\alpha}_2||F|(-1)^k\, .
\end{equation}
Thus, by equating equations \eqref{eq:68} and \eqref{eq:65}, multiplying both sides by $(-1)^{n-k}$, and dividing by $|\Tilde{\alpha}_2|$, we obtain the equality we set out to prove.
\end{proof}
In particular, using $C(\lambda)$ as defined in \eqref{eq:20} and $F(\vx,\lambda)$ defined to be the fundamental matrix in equation \eqref{eq:152}, we can apply Lemma \ref{thm:C_and_fundamental} to obtain the following relation:
\begin{equation}\label{eq:154}
(-1)^n|C(\lambda)| = |F(\vx,\lambda)| = E^{\Omega}_\Gamma(\lambda).
\end{equation}
Hence, $|C(\lambda)|$ is zero if and only if $\lambda\in \sigma(H^{\Omega}_\Gamma)$. Since we have supposed $\lambda\in\rho(H^{\Omega}_\Gamma)$, we have that $C(\lambda)$ is invertible and we can apply Cramer's rule to solve equation \eqref{eq:19}. Of course, the right side of this equation is the bottom half of $-\gamma_\Gamma \vy_p$, which can be calculated to be
\begin{align}\label{eq:25}
&-\alpha_1 \vy_p(\Vec{0},\lambda)- \alpha_2 \vy_p'(\Vec{0},\lambda)\nonumber
\\&=\begin{bmatrix}
\sum_{i=1}^n\begin{vmatrix}
-\overline{b_{\tau_i,i}} & -b_{1, i}\\ \overline{a_{\tau_i,i}}&a_{1, i}
\end{vmatrix}\frac{\int_{0}^{\ell_i} v_i(t_i)z_{i,i}(t_i,\lambda) \,dt_i}{D_i(\lambda)}
\\
\vdots\\
\sum_{i=1}^n\begin{vmatrix}
-\overline{b_{\tau_i,i}} & -b_{n, i}\\ \overline{a_{\tau_i,i}}&a_{n, i}
\end{vmatrix}\frac{\int_{0}^{\ell_i} v_i(t_i)z_{i,i}(t_i,\lambda) \,dt_i}{D_i(\lambda)}
\end{bmatrix} \, .
\end{align}
So, applying Cramer's rule to solve equation \eqref{eq:19}, we obtain
\begin{align} \label{eq:27}
c_{n+i} = \frac{|C_i(\lambda,\vv)|}{|C(\lambda)|} \, ,
\end{align}
where $C_i$ is defined as $C$ with the $i$th column replaced by the right side of equation \eqref{eq:25}. It is this replaced column that introduces $\vv$ dependence which will be vital later, so we make special note of it.

Thus, the resolvent is given by:
\begin{equation}\label{eq:resolvent_formula}
(R_\lambda^\Gamma\vv)(\vx) = \left ( \sum_{i=1}^n c_{n+i}(\lambda,\vv) \vz_i(\vx,\lambda) \right ) +\vy_p(\vx,\lambda,\vv) \, .
\end{equation}

As previously mentioned, the choice of the $n$ different $D_i$ matrices depends on $\lambda$ locally. However, our construction method overall will actually give us the resolvent globally, for all $\lambda\in \rho(H^{\Omega}_\Gamma)$. Since our constructed $R_\lambda^\Gamma\vv$ is a meromorphic function which is identical to the resolvent on some disc over which our choice of $D_i$ matrices is valid, it is in fact equal to the resolvent globally due to the unique analytic continuation of holomorphic functions (which can be adjusted to extend to meromorphic functions).

\section{From Resolvents to Maps}
Let $\lambda\in \rho(H^{\Omega}_\Gamma)$, and fix $f\in \CC^{2n}$. If $\vu_{\Gamma}\in H^2(\Omega)$ solves
\begin{equation}\label{eq:28}
\begin{cases}
(H^\Omega-\lambda)\vu_\Gamma = \Vec{0} \, ,\\
\gamma_\Gamma \vu = \vf \, , 
\end{cases}
\end{equation}
then, as seen in \cite{gesztesy2008generalized}, one can use integration by parts to show that
\begin{align}\label{eq:ibp}
(\vu_\Gamma, \vv) &= (\vu_\Gamma, (H^\Omega_\Gamma-\bar{\lambda} )R_{\bar{\lambda}}^\Gamma\vv)
\nonumber\\&= ((H^\Omega_\Gamma-\bar{\lambda} )\vu_\Gamma, R_{\bar{\lambda}}^\Gamma\vv)+ \langle \gamma_N\vu_\Gamma, \gamma_D R_{\bar{\lambda}}^\Gamma\vv \ra - \langle \gamma_D \vu_\Gamma, \gamma_N R_{\bar{\lambda}}^\Gamma\vv \ra
\nonumber\\&= \langle \gamma_N\vu_\Gamma, \gamma_D R_{\bar{\lambda}}^\Gamma\vv \ra - \langle \gamma_D \vu_\Gamma, \gamma_N R_{\bar{\lambda}}^\Gamma\vv \ra \,\nonumber
\\&= \langle \gamma_N\vu_\Gamma, \gamma_D \overline{R_{\lambda}^\Gamma\vv} \ra - \langle \gamma_D \vu_\Gamma, \gamma_N \overline{R_{\lambda}^\Gamma\vv} \ra \, ,
\end{align}
where $(\cdot,\cdot)$ is the $L^2$ inner product,$\la \cdot, \cdot \ra$ is the $\CC^{2n}$ inner product, and $\vv\in L^2(\Omega)$. Both are linear in their first component and antilinear in their second. We wish to somehow incorporate $\vf$ in this formula. Taking inspiration from \cite{qgraphIntro}, we introduce several objects. They can be constructed based on $\alpha$ or $\beta$ matrices, but we can denote the construction of both at once in terms of the block diagonal matrices:
\begin{equation}\label{eq:delta_def}
\delta_i = \begin{bmatrix}
\beta_i & 0\\
0 & \alpha_i
\end{bmatrix} \hspace{.25in} i\in \{1,2\} \, .
\end{equation}
First, we define a function $\eta$ by:
\begin{equation}\label{eq:eta_def}
\eta(k) = -(\delta_1 + ik\delta_2)^{-1}(\delta_1-ik\delta_2)\, .
\end{equation}
We use this function to define the unitary matrix $\mathcal{U}$:
\begin{equation}\label{eq:u_def}
\mathcal{U} = \eta(-1) = -(\delta_1 - i\delta_2)^{-1}(\delta_1+i\delta_2)\, .
\end{equation}
We will now modify the following basic equation:
\begin{equation}\label{eq:29}
\delta_1 \begin{bmatrix}
\vu_\Gamma(\Vec{\ell},\lambda)\\
\vu_\Gamma(\Vec{0},\lambda)
\end{bmatrix} + \delta_2 \begin{bmatrix}
\vu_\Gamma'(\Vec{\ell},\lambda)\\
\vu_\Gamma'(\Vec{0},\lambda)
\end{bmatrix} = \vf\, .
\end{equation}
Multiplying equation \eqref{eq:29} by $-2i(\delta_1-i\delta_2)^{-1}$ from the left gives us:
\begin{equation}\label{eq:31}
i(\mathcal{U}-I_{2n})\begin{bmatrix}
\vu_\Gamma(\Vec{\ell},\lambda)\\
\vu_\Gamma(\Vec{0},\lambda)
\end{bmatrix} + (\mathcal{U}+I_{2n})\begin{bmatrix}
\vu_\Gamma'(\Vec{\ell},\lambda)\\
\vu_\Gamma'(\Vec{0},\lambda)
\end{bmatrix} = -2i(\delta_1-i\delta_2)^{-1}\vf \, .
\end{equation}
We can get more useful identities from this one by introducing the Dirichlet, Neumann, and Robin projections.

\begin{definition}\label{def:projections}
The Dirichlet Projection $P_D$ is the projection onto the kernel of $\delta_2$. It can alternatively be defined as the projection onto the eigenspace of $\mathcal{U}$ corresponding to the eigenvalue $-1$.

The Neumann Projection $P_N$ is the projection onto the kernel of $\delta_1$. It can alternatively be defined as the projection onto the eigenspace of $\mathcal{U}$ corresponding to the eigenvalue $1$.

The Robin Projection $P_R$ is defined in terms of the other two by the equation
\begin{equation}\label{eq:robin_proj}
P_D+P_N+P_R = I_{2n} \, .
\end{equation}
It is also the projection onto the eigenspace of $\mathcal{U}$ corresponding to all other eigenvalues not $\pm 1$.
\end{definition}

\begin{remark}
All three projectors commute with $\mathcal{U}$, $(\mathcal{U}+I_{2n})$, and $(\mathcal{U}-I_{2n})$. We also get from the definitions that:
\begin{align}\label{eq:32}
(\mathcal{U} + I_{2n})P_D &= 0_{2n}\nonumber\, ,\\
(\mathcal{U} - I_{2n})P_D &= -2P_D\nonumber\, ,\\
(\mathcal{U} + I_{2n})P_N &= 2P_N\nonumber\, ,\\
(\mathcal{U} - I_{2n})P_N &= 0_{2n}\, .
\end{align}
\end{remark}

One more important object is $\Lambda:\text{ran}\, P_R\rightarrow \text{ran}\, P_R$. It is an invertible self-adjoint operator constructed as follows:
\begin{equation}\label{eq:30}
\Lambda := -i(\mathcal{U}+I_{2n})^{-1}_R(\mathcal{U}-I_{2n}) \, ,
\end{equation}
where $(\mathcal{U}+I_{2n})_R$ is the restriction of $(\mathcal{U}+I_{2n})$ to the range of $P_R$ (this restriction has domain and range $\ran P_R$, and is invertible). Since $\delta_1$ and $\delta_2$ are both block diagonal, these new objects are all also block diagonal as well.

With the introduction of $\Lambda$, we can now list one more set of defining properties of these objects. Since $R^\Gamma_\lambda$ satisfies $\Gamma$ boundary conditions, we have that:
\begin{align}\label{eq:projection_equations}
P_D \begin{bmatrix}
(R^\Gamma_\lambda\vv)(\Vec{\ell},\lambda)\\
(R^\Gamma_\lambda\vv)(\Vec{0},\lambda)
\end{bmatrix} &= \Vec{0}
\nonumber \, ,\\
P_N \begin{bmatrix}
(R^\Gamma_\lambda\vv)'(\Vec{\ell},\lambda)\\
(R^\Gamma_\lambda\vv)'(\Vec{0},\lambda)
\end{bmatrix} &= \Vec{0}
\nonumber\, ,\\
P_R\begin{bmatrix}
(R^\Gamma_\lambda\vv)'(\Vec{\ell},\lambda)\\
(R^\Gamma_\lambda\vv)'(\Vec{0},\lambda)
\end{bmatrix} &= \Lambda P_R \begin{bmatrix}
(R^\Gamma_\lambda\vv)(\Vec{\ell},\lambda)\\
(R^\Gamma_\lambda\vv)(\Vec{0},\lambda)
\end{bmatrix}\, .
\end{align}

Here our derivations diverge from those in \cite{qgraphIntro}, where equations \eqref{eq:33}, \eqref{eq:34}, and \eqref{eq:35} are found with $\vf=\Vec{0}$. Multiplying equation $\eqref{eq:31}$ by $P_D$ from the left, we obtain:
\begin{equation}\label{eq:33}
P_D \begin{bmatrix}
\vu_\Gamma(\Vec{\ell},\lambda)\\
\vu_\Gamma(\Vec{0},\lambda)
\end{bmatrix} = P_D(\delta_1-i\delta_2)^{-1}\vf \,.
\end{equation}

Multiplying equation \eqref{eq:31} by $P_N$ from the left yields:
\begin{equation}\label{eq:34}
P_N \begin{bmatrix}
\vu_\Gamma'(\Vec{\ell},\lambda)\\
\vu_\Gamma'(\Vec{0},\lambda)
\end{bmatrix} = -iP_N(\delta_1-i\delta_2)^{-1}\vf \,.
\end{equation}

Multiplying equation \eqref{eq:31} by $P_R$ from the left gives:
\begin{align}\label{eq:35}
&i(\U - I_{2n})P_R\begin{bmatrix}
\vu_\Gamma(\Vec{\ell},\lambda)\\
\vu_\Gamma(\Vec{0},\lambda)
\end{bmatrix} + (\U + I_{2n})P_R\begin{bmatrix}
\vu_\Gamma'(\Vec{\ell},\lambda)\\
\vu_\Gamma'(\Vec{0},\lambda)
\end{bmatrix} = -2iP_R(\delta_1 - i\delta_2)^{-1}\vf\nonumber
\\
&\Rightarrow(\U + I_{2n})P_R\begin{bmatrix}
\vu_\Gamma'(\Vec{\ell},\lambda)\\
\vu_\Gamma'(\Vec{0},\lambda)
\end{bmatrix} = -i(\U - I_{2n})P_R\begin{bmatrix}
\vu_\Gamma(\Vec{\ell},\lambda)\\
\vu_\Gamma(\Vec{0},\lambda)
\end{bmatrix} \nonumber
\\&\hspace{1.8in}-2iP_R(\delta_1 - i\delta_2)^{-1}\vf\nonumber
\\&\Rightarrow P_R\begin{bmatrix}
\vu_\Gamma'(\Vec{\ell},\lambda)\\
\vu_\Gamma'(\Vec{0},\lambda)
\end{bmatrix} = -i(\U + I_{2n})^{-1}_R(\U - I_{2n})P_R\begin{bmatrix}
\vu_\Gamma(\Vec{\ell},\lambda)\\
\vu_\Gamma(\Vec{0},\lambda)
\end{bmatrix} \nonumber
\\&\hspace{1.2in}-2i(\U + I_{2n})_R^{-1}P_R(\delta_1 - i\delta_2)^{-1}\vf\nonumber
\\&\Rightarrow P_R\begin{bmatrix}
\vu_\Gamma'(\Vec{\ell},\lambda)\\
\vu_\Gamma'(\Vec{0},\lambda)
\end{bmatrix} = \Lambda P_R\begin{bmatrix}
\vu_\Gamma(\Vec{\ell},\lambda)\\
\vu_\Gamma(\Vec{0},\lambda)
\end{bmatrix} -2i(\U + I_{2n})_R^{-1}P_R(\delta_1 - i\delta_2)^{-1}\vf\,.
\end{align}

We can rewrite the previous three equations in terms of the Dirichlet and Neumann traces if we account for the normal vector scaling in Neumann traces. So, given some $2n\times 2n$ matrix $M$, we introduce the new matrix $\Tilde{M}$ defined as
\begin{equation}\label{eq:36}
\Tilde{M} = M\begin{bmatrix}
I_n & 0\\
0 & -I_n
\end{bmatrix}\,.
\end{equation}
Then we may rewrite equations \eqref{eq:33}, \eqref{eq:34}, \eqref{eq:35}, and the last line of \eqref{eq:34} as
\begin{align}\label{eq:37}
P_D \gamma_D\vu_\Gamma &= P_D(\delta_1-i\delta_2)^{-1}\vf \,,
\nonumber\\
P_N \gamma_N\vu_\Gamma &= -i\Tilde{P}_N(\delta_1-i\delta_2)^{-1}\vf\,,
\nonumber \\
P_R \gamma_N\vu_\Gamma &= \Lambda \Tilde{P}_R\gamma_D\vu_\Gamma-2i(\U + I_{2n})^{-1}_R \Tilde{P}_R(\delta_1 -  i\delta_2)^{-1}\vf\,,
\nonumber \\
P_R \gamma_N R_\lambda^{\Gamma}\vv &= \Lambda \Tilde{P}_R\gamma_DR_\lambda^{\Gamma}\vv\,.
\end{align}
Note that, due to the block diagonal nature of all of the new matrices involved, we can safely rewrite the domain and range of $\Lambda$ as $\text{ran}\,\Tilde{P}_R$. Using all this machinery, we can alter our inner product formula.
\begin{theorem}\label{thm:inner_product}
Let $\lambda\in \rho(H^{\Omega}_\Gamma)$, $\vf\in \CC^{2n}$, and $\vv\in L^2(\Omega)$. Let $Q$ be the image of $\ran \Tilde{P}_R$ under $\delta_2$. If $\vu_\Gamma\in H^2(\Omega)$ solves equation \eqref{eq:28}, then
\begin{align} \label{eq:the_inner_product}
(\vu_\Gamma,\vv) =&\langle -i\Tilde{P}_N(\delta_1 -  i\delta_2)^{-1}\vf, \gamma_D \overline{R_{\lambda}^\Gamma \vv}\rangle \nonumber
\\&+\langle (\delta_2)_Q^{-1}(\delta_1 - i\delta_2)_R \Tilde{P}_R(\delta_1 -  i\delta_2)^{-1}\vf, \gamma_D \overline{R_{\lambda}^\Gamma \vv}\rangle \nonumber
\\&-\langle P_D(\delta_1-i\delta_2)^{-1}\vf, \gamma_N \overline{R_{\lambda}^\Gamma \vv}\rangle \,.
\end{align}
\end{theorem}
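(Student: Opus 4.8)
The plan is to begin from the last line of \eqref{eq:ibp}, namely $(\vu_\Gamma,\vv)=\la\gamma_N\vu_\Gamma,\gamma_D\overline{R_\lambda^\Gamma\vv}\ra-\la\gamma_D\vu_\Gamma,\gamma_N\overline{R_\lambda^\Gamma\vv}\ra$, and to insert the partition of unity $I_{2n}=P_D+P_N+P_R$ in front of $\gamma_N\vu_\Gamma$ in the first pairing and in front of $\gamma_D\vu_\Gamma$ in the second. Because $P_D,P_N,P_R$ are spectral projectors of the unitary $\U$ they are orthogonal, hence self-adjoint, so each may be moved freely to the other slot of $\la\cdot,\cdot\ra$. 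The inputs are the first three lines of \eqref{eq:37}, which express $P_D\gamma_D\vu_\Gamma$, $P_N\gamma_N\vu_\Gamma$, and $P_R\gamma_N\vu_\Gamma$ in terms of $\vf$, together with $P_D\gamma_D R_\lambda^\Gamma\vv=0$, $P_N\gamma_N R_\lambda^\Gamma\vv=0$, and $P_R\gamma_N R_\lambda^\Gamma\vv=\Lambda\Tilde{P}_R\gamma_D R_\lambda^\Gamma\vv$ from \eqref{eq:projection_equations} and the last line of \eqref{eq:37}; these resolvent relations carry over verbatim to $\overline{R_\lambda^\Gamma\vv}$ since they only encode that the function satisfies the $\Gamma$ conditions.

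Expanding the first pairing, the $P_D$ piece is $\la\gamma_N\vu_\Gamma,P_D\gamma_D\overline{R_\lambda^\Gamma\vv}\ra=0$; the $P_N$ piece is, by the second line of \eqref{eq:37}, exactly the first term $\la-i\Tilde{P}_N(\delta_1-i\delta_2)^{-1}\vf,\gamma_D\overline{R_\lambda^\Gamma\vv}\ra$ of \eqref{eq:the_inner_product}; and the $P_R$ piece, by the third line of \eqref{eq:37}, splits into $\la\Lambda\Tilde{P}_R\gamma_D\vu_\Gamma,\gamma_D\overline{R_\lambda^\Gamma\vv}\ra$ and $-\la 2i(\U+I_{2n})_R^{-1}\Tilde{P}_R(\delta_1-i\delta_2)^{-1}\vf,\gamma_D\overline{R_\lambda^\Gamma\vv}\ra$. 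Expanding the second pairing, the $P_D$ piece is $\la P_D(\delta_1-i\delta_2)^{-1}\vf,\gamma_N\overline{R_\lambda^\Gamma\vv}\ra$ (first line of \eqref{eq:37}), which with the outer minus sign is the third term of \eqref{eq:the_inner_product}; the $P_N$ piece is $\la\gamma_D\vu_\Gamma,P_N\gamma_N\overline{R_\lambda^\Gamma\vv}\ra=0$; and the $P_R$ piece, after the resolvent's Robin relation, is $\la\gamma_D\vu_\Gamma,\Lambda\Tilde{P}_R\gamma_D\overline{R_\lambda^\Gamma\vv}\ra$. To finish this part I would use that $\Lambda$ is self-adjoint and that $\Lambda$, $\Tilde{P}_R$, and the matrix $\begin{bmatrix}I_n&0\\0&-I_n\end{bmatrix}$ of \eqref{eq:36} are block diagonal --- so they may be reordered, and $P_R\Lambda=\Lambda P_R=\Lambda$ once $\Lambda$ is extended by zero off $\ran P_R$ --- to rewrite this $P_R$ piece as $\la\Lambda\Tilde{P}_R\gamma_D\vu_\Gamma,\gamma_D\overline{R_\lambda^\Gamma\vv}\ra$. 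Since the second pairing enters with a minus sign, it cancels the $\la\Lambda\Tilde{P}_R\gamma_D\vu_\Gamma,\gamma_D\overline{R_\lambda^\Gamma\vv}\ra$ coming from the first pairing, leaving the first and third terms of \eqref{eq:the_inner_product} together with $-\la 2i(\U+I_{2n})_R^{-1}\Tilde{P}_R(\delta_1-i\delta_2)^{-1}\vf,\gamma_D\overline{R_\lambda^\Gamma\vv}\ra$.

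The final step is to identify this leftover with the middle term of \eqref{eq:the_inner_product}, which amounts to the operator identity $-2i(\U+I_{2n})_R^{-1}=(\delta_2)_Q^{-1}(\delta_1-i\delta_2)_R$ on $\ran\Tilde{P}_R=\ran P_R$. From \eqref{eq:u_def} one computes $\U+I_{2n}=-2i(\delta_1-i\delta_2)^{-1}\delta_2$. Since $\ker\delta_2=\ran P_D$ is orthogonal to $\ran P_R$, the restriction $\delta_2:\ran P_R\to Q$ is a bijection with inverse $(\delta_2)_Q^{-1}$; solving $(\U+I_{2n})_R w=u$ for $w\in\ran P_R$ gives $\delta_2 w=\tfrac{i}{2}(\delta_1-i\delta_2)u$, hence $w=\tfrac{i}{2}(\delta_2)_Q^{-1}(\delta_1-i\delta_2)u$, and the existence of such a $w$ also shows that $\delta_1-i\delta_2$ carries $\ran P_R$ into $Q$, so the composite is meaningful. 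Multiplying by $-2i$ gives the identity, and substituting it into the leftover term (using linearity of $\la\cdot,\cdot\ra$ in the first slot) produces precisely $\la(\delta_2)_Q^{-1}(\delta_1-i\delta_2)_R\Tilde{P}_R(\delta_1-i\delta_2)^{-1}\vf,\gamma_D\overline{R_\lambda^\Gamma\vv}\ra$, which completes the proof.

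I expect the vanishing pieces and the reindexing to be routine. The two steps needing real care are the cancellation of the Robin contributions --- which relies on $\Lambda=\Lambda^*$ together with the block-diagonal structure that lets $\Lambda$, $\begin{bmatrix}I_n&0\\0&-I_n\end{bmatrix}$, and $P_R$ be commuted past one another --- and the partial-inverse identity $-2i(\U+I_{2n})_R^{-1}=(\delta_2)_Q^{-1}(\delta_1-i\delta_2)_R$, where one must keep careful track of the domains and codomains of the partial inverses $(\U+I_{2n})_R^{-1}$ and $(\delta_2)_Q^{-1}$ and of the restriction $(\delta_1-i\delta_2)_R$, in particular verifying that $\delta_1-i\delta_2$ sends $\ran P_R$ into $Q$ rather than merely into $\CC^{2n}$.
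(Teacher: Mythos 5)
Your proposal is correct and follows essentially the same route as the paper's proof: the same insertion of $P_D+P_N+P_R$ into both pairings of \eqref{eq:ibp}, the same use of \eqref{eq:37} and \eqref{eq:projection_equations} to kill the $P_D$ and $P_N$ pieces of the resolvent traces, the same cancellation of the two Robin contributions via $\Lambda=\Lambda^*$, and the same identification $-2i(\U+I_{2n})^{-1}_R=(\delta_2)^{-1}_Q(\delta_1-i\delta_2)_R$ obtained from $\U+I_{2n}=-2i(\delta_1-i\delta_2)^{-1}\delta_2$. The two points you flag as delicate are exactly the ones the paper also leans on.
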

\begin{proof}
By utilizing our new self-adjoint projections, we find that:
\begin{align}\label{eq:big_inner_prod}
(\vu_{\Gamma},\vv) \myeq{\eqref{eq:ibp}}\,\,&\langle\gamma_N \vu_\Gamma, \gamma_D \overline{R_{\lambda}^\Gamma \vv}\rangle - \langle \gamma_D \vu_\Gamma, \gamma_N \overline{R_{\lambda}^\Gamma \vv}\rangle
\nonumber\\
\myeq{\eqref{eq:robin_proj}}\,\,&
\langle\gamma_N \vu_\Gamma, (P_D+P_N+P_R)\gamma_D \overline{R_{\lambda}^\Gamma \vv}\rangle \nonumber
\\&- \langle \gamma_D \vu_\Gamma, (P_D+P_N+P_R)\gamma_N \overline{R_{\lambda}^\Gamma \vv}\rangle
\nonumber\\
\myeq{\eqref{eq:projection_equations}}\,\,&\langle P_N\gamma_N \vu_\Gamma, \gamma_D \overline{R_{\lambda}^\Gamma \vv}\rangle +\langle P_R\gamma_N \vu_\Gamma, \gamma_D \overline{R_{\lambda}^\Gamma \vv}\rangle 
\nonumber\\
&-\langle P_D\gamma_D \vu_\Gamma, \gamma_N \overline{R_{\lambda}^\Gamma \vv}\rangle - \langle \gamma_D \vu_\Gamma, P_R\gamma_N \overline{R_{\lambda}^\Gamma \vv}\rangle
\nonumber\\
\myeq{\eqref{eq:37}}\,\,&\langle -i\Tilde{P}_N(\delta_1 - i\delta_2)^{-1}\vf, \gamma_D \overline{R_{\lambda}^\Gamma \vv}\rangle 
\nonumber\\
&+\langle \Lambda \Tilde{P}_R\gamma_D\vu_\Gamma-2i(\U + I_{2n})^{-1}_R \Tilde{P}_R(\delta_1 -  i\delta_2)^{-1}\vf, \gamma_D \overline{R_{\lambda}^\Gamma \vv}\rangle
\nonumber\\
&-\langle P_D(\delta_1-i\delta_2)^{-1}\vf, \gamma_N \overline{R_{\lambda}^\Gamma \vv}\rangle - \langle \gamma_D \vu_\Gamma, \Lambda \Tilde{P}_R\gamma_D \overline{R_{\lambda}^\Gamma \vv}\rangle
\nonumber\\
=&\langle -i\Tilde{P}_N(\delta_1 - i\delta_2)^{-1}\vf, \gamma_D \overline{R_{\lambda}^\Gamma \vv}\rangle 
\nonumber\\
&+\langle \Tilde{P}_R\Lambda \Tilde{P}_R\gamma_D\vu_\Gamma-2i(\U + I_{2n})^{-1}_R \Tilde{P}_R(\delta_1 -  i\delta_2)^{-1}\vf, \gamma_D \overline{R_{\lambda}^\Gamma \vv}\rangle
\nonumber\\
&-\langle P_D(\delta_1-i\delta_2)^{-1}\vf, \gamma_N \overline{R_{\lambda}^\Gamma \vv}\rangle - \langle \gamma_D \vu_\Gamma, \Tilde{P}_R\Lambda \Tilde{P}_R\gamma_D \overline{R_{\lambda}^\Gamma \vv}\rangle
\nonumber\\
=&\langle -i\Tilde{P}_N(\delta_1 -  i\delta_2)^{-1}\vf, \gamma_D \overline{R_{\lambda}^\Gamma \vv}\rangle 
\nonumber\\
&+\langle -2i(\U + I_{2n})^{-1}_R \Tilde{P}_R(\delta_1 -  i\delta_2)^{-1}\vf, \gamma_D \overline{R_{\lambda}^\Gamma \vv}\rangle
\nonumber\\
&-\langle P_D(\delta_1-i\delta_2)^{-1}\vf, \gamma_N \overline{R_{\lambda}^\Gamma \vv}\rangle
\nonumber\\
&+\langle [\Lambda -\Lambda^*]\Tilde{P}_R\gamma_D\vu_\Gamma, \Tilde{P}_R\gamma_D \overline{R_{\lambda}^\Gamma \vv}\rangle\,.
\end{align}
As the Cayley Transform of the unitary matrix $\mathcal{U}$, $\Lambda$ is self-adjoint as a matrix from $\ran \Tilde{P}_R$ to $\ran \Tilde{P}_R$. Thus, the last term in the last equality of equation \eqref{eq:big_inner_prod} is $0$.

As an additional simplification, we wish to eliminate $\U$ from the term $-2i(\U + I)^{-1}_R \Tilde{P}_R(\delta_1 -  i\delta_2)^{-1}\vf$. We observe that $\U+I_{2n}$ can be rewritten as
\[
\U+I_{2n} = -2i(\delta_1 - i\delta_2)^{-1}\delta_2 \,.
\]
Since $(\U+I_{2n})_R:\ran \Tilde{P}_R\rightarrow \ran\Tilde{P}_R$ is invertible, we have that if the image of $\ran \Tilde{P}_R$ under $\delta_2$ is some $Q\subseteq \CC^{2n}$, then the image of $Q$ under $(\delta_1 - i\delta_2)^{-1}$ must be $\ran \Tilde{P}_R$. Thus, $(\delta_1-i\delta_2)_R$, the restriction of $(\delta_1-i\delta_2)$ to $\ran \Tilde{P}_R$ will map from $\ran \Tilde{P}_R$ to $Q$, and $(\delta_2)_Q^{-1}$ is defined as a map from $Q$ to $\ran \Tilde{P}_R$, so we see that
\[
-2i(\U + I_{2n})^{-1} \Tilde{P}_R(\delta_1 -  i\delta_2)^{-1}\vf = (\delta_2)_Q^{-1}(\delta_1 - i\delta_2)_R \Tilde{P}_R(\delta_1 -  i\delta_2)^{-1}\vf \,.
\]
With these simplifications to equation \eqref{eq:big_inner_prod}, we obtain the desired formula.
\end{proof}

Theorem \ref{thm:inner_product} can be used to construct $\vu_\Gamma$ with an arbitrary trace value $\vf$, by using $R_{\lambda}^\Gamma\vv$. In particular, by setting $\vv$ to constant functions taking the form of the various standard basis vectors of $\RR^n$, we can solve for the components of $\vu_{\Gamma,i}$ one by one. Part of this process will involve the integrands produced by two different inner products. Note that the various $C_i(\lambda,\Vec{e}_j)$ include a column (from equation \eqref{eq:25}) in which all entries carry definite integrals of $\frac{z_{j,j}}{D_j}$ from $0$ to $\ell_j$ with respect to the placeholder $t_j$. Thus, taking a determinant and expanding down this column, we see that every term in this sum defining $|C_i(\lambda,\Vec{e}_j)|$ still includes a factor carrying this integral. By absorbing all constants and combining integral terms, we can write the whole determinant as one definite integral from $0$ to $\ell_j$. If we define $\mathcal{C}_i(\lambda,\Vec{e}_j)$ as $C_i(\lambda,\Vec{e}_j)$ without these integrals (that is, leaving behind a column of only the scaling constants), then we easily see that 
\begin{equation}\label{Ck}
    |C_i(\lambda,\Vec{e}_j)| = \int_0^{\ell_j}|\mathcal{C}_i(\lambda,\Vec{e}_j)|\frac{z_{j,j}(t_j,\lambda)}{D_j(\lambda)} \,dt_j.
\end{equation}

Before we proceed, we define the following adjustment vectors:
\begin{align}\label{eq:38}
\Vec{L}_i &=
-i\Tilde{P}_N(\delta_1 -  i\delta_2)^{-1}\Vec{e}_i \,,
\nonumber\\ \Vec{M}_i &=(\delta_2)_Q^{-1}(\delta_1 - i\delta_2)_R \Tilde{P}_R(\delta_1 -  i\delta_2)^{-1}\Vec{e}_i \,,
\nonumber\\\Vec{N}_i &= -P_D(\delta_1-i\delta_2)^{-1}\Vec{e}_i \,.
\end{align}

\begin{theorem}\label{thm:general_ugamma}
Let $\lambda\in \rho(H^{\Omega}_\Gamma)$, and let $\vu_{\Gamma,i}\in H^2(\Omega)$ be the unique solution to
\[
\begin{cases}
(H^{\Omega}-\lambda)\vu_{\Gamma,i}=\Vec{0}\\
\gamma_{\Gamma}\vu_{\Gamma,i}=\Vec{e}_i.
\end{cases}
\] 
Then,
the $j$th component $(\vu_{\Gamma,i})_j$ can be calculated using the formula:
\begin{align}\label{eq:general_ugamma}
(\vu_{\Gamma,i})_j(x_j,\lambda)=&\la \Vec{L}_i + \Vec{M}_i, \begin{bmatrix}
\frac{z_{j,j}(x_j,\lambda)}{D_j(\lambda)}\frac{|\mathcal{C}_1(\lambda,\Vec{e}_j)|}{|C(\lambda)|}z_{1,1}(\ell_1,\lambda)
\\
\vdots
\\
\frac{z_{j,j}(\ell_j,\lambda)(z_{j,j}(x_j,\lambda)|\mathcal{C}_j(\lambda,\Vec{e}_j)|-|C(\lambda)|y_{\tau_j, j}(x_j,\lambda))}{|C(\lambda)|D_j(\lambda)}
\\
\vdots
\\  \frac{z_{j,j}(x_j,\lambda)}{D_j(\lambda)}\frac{|\mathcal{C}_n(\lambda,\Vec{e}_j)|}{|C(\lambda)|}z_{n,n}(\ell_n,\lambda)
\\ \frac{z_{j,j}(x_j,\lambda)}{D_j(\lambda)}\frac{|\mathcal{C}_1(\lambda,\Vec{e}_j)|}{|C(\lambda)|}z_{1,1}(0,\lambda)
\\
\vdots
\\
\frac{z_{j,j}(x_j,\lambda)(z_{j,j}(0,\lambda)|\mathcal{C}_j(\lambda,\Vec{e}_j)|-|C(\lambda)|y_{\tau_j, j}(0,\lambda))}{|C(\lambda)|D_j(\lambda)}
\\
\vdots
\\
\frac{z_{j,j}(x_j,\lambda)}{D_j(\lambda)}\frac{|\mathcal{C}_n(\lambda,\Vec{e}_j)|}{|C(\lambda)|}z_{n,n}(0,\lambda)
\end{bmatrix} \ra 
\nonumber\\ &+ \la \Vec{N}_i, \begin{bmatrix}
\frac{z_{j,j}(x_j,\lambda)}{D_j(\lambda)}\frac{|\mathcal{C}_1(\lambda,\Vec{e}_j)|}{|C(\lambda)|}z_{1,1}'(\ell_1,\lambda)
\\
\vdots
\\
\frac{z_{j,j}'(\ell_j,\lambda)(z_{j,j}(x_j,\lambda)|\mathcal{C}_j(\lambda,\Vec{e}_j)|-|C(\lambda)|y_{\tau_j, j}(x_j,\lambda))}{|C(\lambda)|D_j(\lambda)}
\\
\vdots
\\  \frac{z_{j,j}(x_j,\lambda)}{D_j(\lambda)}\frac{|\mathcal{C}_n(\lambda,\Vec{e}_j)|}{|C(\lambda)|}z_{n,n}'(\ell_n,\lambda)
\\ -\frac{z_{j,j}(x_j,\lambda)}{D_j(\lambda)}\frac{|\mathcal{C}_1(\lambda,\Vec{e}_j)|}{|C(\lambda)|}z_{1,1}'(0,\lambda)
\\
\vdots
\\
-\frac{z_{j,j}(x_j,\lambda)(z_{j,j}'(0,\lambda)|\mathcal{C}_j(\lambda,\Vec{e}_j)|-|C(\lambda)|y_{\tau_j, j}'(0,\lambda))}{|C(\lambda)|D_j(\lambda)}
\\
\vdots
\\
-\frac{z_{j,j}(x_j,\lambda)}{D_j(\lambda)}\frac{|\mathcal{C}_n(\lambda,\Vec{e}_j)|}{|C(\lambda)|}z_{n,n}'(0,\lambda)
\end{bmatrix}\ra \,,
\end{align}
where the terms with $|C(\lambda)|$ in the numerator occur in the $j$th and $(n+j)$th components only.
\end{theorem}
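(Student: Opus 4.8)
The plan is to apply Theorem~\ref{thm:inner_product} with $\vf=\Vec{e}_i$, so that the function called $\vu_\Gamma$ there is exactly our $\vu_{\Gamma,i}$, and then to make the right-hand side of \eqref{eq:the_inner_product} explicit enough to read off $\vu_{\Gamma,i}$ itself. With $\vf=\Vec{e}_i$ the three coefficient blocks in \eqref{eq:the_inner_product} become precisely the adjustment vectors $\Vec{L}_i,\Vec{M}_i,\Vec{N}_i$ of \eqref{eq:38}, so for every $\vv\in L^2(\Omega)$,
\[
(\vu_{\Gamma,i},\vv)=\la \Vec{L}_i+\Vec{M}_i,\ \gamma_D\overline{R_\lambda^\Gamma\vv}\ra+\la \Vec{N}_i,\ \gamma_N\overline{R_\lambda^\Gamma\vv}\ra .
\]
Since $\vu_{\Gamma,i}\in H^2(\Omega)$ is continuous and is determined by its $L^2$ pairings, it suffices to compute the two finite-dimensional trace vectors $\gamma_D R_\lambda^\Gamma\vv$ and $\gamma_N R_\lambda^\Gamma\vv$ as explicit linear integral functionals of $\vv$ and then extract the kernel attached to the $\epsilon_j$ component of $\vv$; equivalently, one evaluates the identity above at the Dirac mass at $x_j$ supported on $\epsilon_j$, for which the left side collapses to $(\vu_{\Gamma,i})_j(x_j,\lambda)$.

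The heart of the argument is the explicit computation of $\gamma_D R_\lambda^\Gamma\vv$ and $\gamma_N R_\lambda^\Gamma\vv$ from the resolvent formula \eqref{eq:resolvent_formula}, $R_\lambda^\Gamma\vv=\sum_{k=1}^n c_{n+k}(\lambda,\vv)\vz_k+\vy_p$. First I would use that $z_{k,m}=0$ for $m\neq k$, so that the value of the $m$th component of $R_\lambda^\Gamma\vv$ at either endpoint is simply $c_{n+m}(\lambda,\vv)\,z_{m,m}(\cdot\,)+(\vy_p)_m(\cdot\,)$. Then I would insert the Cramer's-rule expression \eqref{eq:27} for $c_{n+m}$ together with \eqref{Ck}; this isolates the dependence of $c_{n+m}$ on the $\epsilon_j$ component of $\vv$ and produces exactly the factors $\tfrac{|\mathcal{C}_m(\lambda,\Vec{e}_j)|}{|C(\lambda)|}\tfrac{z_{j,j}}{D_j}$ appearing in \eqref{eq:general_ugamma}. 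Finally I would evaluate $\vy_p$ from \eqref{eq:particular} at $x_i=0$ and $x_i=\ell_i$: at each endpoint one of the two Variation-of-Parameters integrals has collapsing limits and only a single $y_{\tau_j,j}$ term survives, and $(\vy_p)_m$ feeds the $\epsilon_j$ kernel only when $m=j$. This last fact is precisely why the terms carrying $|C(\lambda)|$ in the numerator (the $y_{\tau_j,j}$ corrections) occur only in the $j$th and $(n+j)$th entries of the two trace vectors. Repeating the computation with $x_i$-derivatives and inserting the Neumann sign on the lower half produces the two column vectors displayed in \eqref{eq:general_ugamma}.

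Assembling the pieces, I would substitute these trace vectors into the displayed identity; grouping the two Dirichlet-trace terms gives the pairing $\la\Vec{L}_i+\Vec{M}_i,\cdot\ra$ of the statement, and the Neumann-trace term gives $\la\Vec{N}_i,\cdot\ra$, which is exactly \eqref{eq:general_ugamma}. One bookkeeping point deserves care: the pairing $\la\cdot,\cdot\ra$ is conjugate-linear in its second slot while $\overline{R_\lambda^\Gamma\vv}$ carries the opposite conjugation, and conjugating a solution of the defining ODE at $\bar\lambda$ returns a solution at $\lambda$ (the operator being self-adjoint); the conjugations cancel, so every $z_{j,j}$, $y_{\tau_j,j}$, $D_j$, $\mathcal{C}_m$, and $C$ in the final formula is evaluated at $\lambda$, as claimed.

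The step I expect to be the main obstacle is the index bookkeeping in the middle paragraph: faithfully separating, inside each entry of $\gamma_D R_\lambda^\Gamma\vv$ and $\gamma_N R_\lambda^\Gamma\vv$, the ``diagonal'' edge-$j$ contribution --- which picks up both a multiple of $\vz_j$ through $c_{n+j}$ and the particular-solution piece --- from the off-diagonal contributions, which see only the $c_{n+m}\vz_m$ term; carrying out the endpoint collapse of the integrals in \eqref{eq:particular} correctly; and matching the resulting determinantal expressions to the $|\mathcal{C}_m(\lambda,\Vec{e}_j)|$ normalization fixed in \eqref{Ck}. Once those are in hand, the theorem follows by direct substitution into Theorem~\ref{thm:inner_product}.
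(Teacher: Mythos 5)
Your proposal is correct and follows essentially the same route as the paper: apply Theorem \ref{thm:inner_product} with $\vf=\Vec{e}_i$, compute $\gamma_D R_\lambda^\Gamma\vv$ and $\gamma_N R_\lambda^\Gamma\vv$ explicitly from \eqref{eq:resolvent_formula}, \eqref{eq:particular}, \eqref{eq:27} and \eqref{Ck} (with the endpoint collapse of the Variation-of-Parameters integrals and the diagonal structure of $Z$), and then identify the kernel attached to the edge-$j$ component of $\vv$ — the paper phrases this as setting $\vv=\Vec{e}_j$ and equating integrands, which is the same kernel-extraction step you describe.
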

\begin{proof}
First we calculate the general Dirichlet and Neumann traces of $R_{\lambda}^\Gamma\vv$. Since each column in $Z$ is only nonzero in one component (see formula \eqref{eq:12}), by using the definition of the resolvent from equation \eqref{eq:resolvent_formula} (which iteslf references the particular solution formula from equation \eqref{eq:particular}), we find that for $k\in \{1,...,n\}$:
\[
(\gamma_D R_{\lambda}^\Gamma\vv)_k = \frac{|C_k({\lambda},\vv)|}{|C({\lambda})|}z_{k,k}(\ell_k,{\lambda}) -z_{k, k}(\ell_k,{\lambda})\int_{0}^{\ell_k}\frac{v_k(t_k)y_{\tau_k, k}(t_k,{\lambda})}{D_k({\lambda})}  \,dt_k,
\]
\[
(\gamma_D R_{\lambda}^\Gamma\vv)_{n+k} = \frac{|C_k(\lambda,\vv)|}{|C(\lambda)|}z_{k,k}(0,\lambda) -y_{\tau_k, k}(0,\lambda)\int_{0}^{\ell_k} \frac{v_k(t_k)z_{k,k}(t_k,\lambda)}{D_k(\lambda)} \,dt_k,
\]
\[
(\gamma_N R_{\lambda}^\Gamma\vv)_k = \frac{|C_k(\lambda,\vv)|}{|C(\lambda)|}z_{k,k}'(\ell_k,\lambda) -z_{k, k}'(\ell_k,\lambda)\int_{0}^{\ell_k}\frac{v_k(t_k)y_{\tau_k, k}(t_k,\lambda)}{D_k(\lambda)} \,dt_k,
\]
\[
(\gamma_N R_{\lambda}^\Gamma\vv)_{n+k} =-(\frac{|C_k(\lambda,\vv)|}{|C(\lambda)|}z_{k,k}'(0,\lambda) -y_{\tau_k, k}'(0,\lambda)\int_{0}^{\ell_k} \frac{v_k(t_k)z_{k,k}(t_k,\lambda)}{D_k(\lambda)} \,dt_k).
\]
When $\vv=\Vec{e}_j$, the components of these traces can be simplified as follows:
\[
(\gamma_D R_{\lambda}^\Gamma\Vec{e}_j)_k = \begin{cases}
\frac{|C_k({\lambda},\Vec{e}_j)|}{|C({\lambda})|}z_{k,k}(\ell_k,{\lambda}), &k\neq j\\
\frac{|C_j({\lambda},\Vec{e}_j)|}{|C({\lambda})|}z_{j,j}(\ell_j,{\lambda}) -z_{j, j}(\ell_j,{\lambda})\int_{0}^{\ell_j}\frac{y_{\tau_j, j}(t_j,{\lambda})}{D_j({\lambda})} \,dt_j, & k=j
\end{cases}
\]
\[
(\gamma_D R_{\lambda}^\Gamma\Vec{e}_j)_{n+k} =\begin{cases}
\frac{|C_k({\lambda},\Vec{e}_j)|}{|C({\lambda})|}z_{k,k}(0,{\lambda}), & k\neq j\\
\frac{|C_j({\lambda},\Vec{e}_j)|}{|C({\lambda})|}z_{j,j}(0,{\lambda}) -y_{\tau_j, j}(0,{\lambda})\int_{0}^{\ell_j} \frac{z_{j,j}(t_j,{\lambda})}{D_j({\lambda})} \,dt_j, & k=j
\end{cases}
\]
\[
(\gamma_N R_\lambda^\Gamma\Vec{e}_j)_k = \begin{cases}
\frac{|C_k(\lambda,\Vec{e}_j)|}{|C(\lambda)|}z_{k,k}'(\ell_k,\lambda), & k\neq j \\
\frac{|C_j(\lambda,\Vec{e}_j)|}{|C(\lambda)|}z_{j,j}'(\ell_j,\lambda) -z_{j, j}'(\ell_j,\lambda)\int_{0}^{\ell_j}\frac{y_{\tau_j, j}(t_j,\lambda)}{D_j(\lambda)} \,dt_j, & k=j
\end{cases}
\]
\[
(\gamma_N R_\lambda^\Gamma\Vec{e}_j)_{n+k} = \begin{cases}
-\frac{|C_k(\lambda,\Vec{e}_j)|}{|C(\lambda)|}z_{k,k}'(0,\lambda), & k\neq j\\
-(\frac{|C_j(\lambda,\Vec{e}_j)|}{|C(\lambda)|}z_{j,j}'(0,\lambda) -y_{\tau_j, j}'(0,\lambda)\int_{0}^{\ell_j} \frac{z_{j,j}(t_j,\lambda)}{D_j(\lambda)} \,dt_j), & k=j
\end{cases}
\]
Thus, we can recover the $j$th component of $\vu_{\Gamma,i}$ by setting $\vv=\Vec{e}_j$, equating the integrands on both sides of equation \eqref{eq:the_inner_product} and using formulas \eqref{Ck}, \eqref{eq:38}, completing the proof.
\end{proof}

\section{Eigenvalue Counting Formulas}
\subsection{Single Split}
We can split $\Omega$ into two subgraphs at some non-vertex point $x_j=s_j$, $\Omega_1$ and $\Omega_2$, over which new conditions and operators are defined as in Remark \ref{rem:extraops}. In addition, we consider boundary conditions $\Gamma_1'$ and $\Gamma_2'$, which are identical to $\Gamma_1$ and $\Gamma_2$, respectively, at all vertices except $s_1$. In $\Gamma_1'$, we impose the Neumann condition $u'(s_1) = 0$, and in $\Gamma_2'$ we impose the Neumann condition $u_j'(s_1)=0$. Of course, from these new conditions we can construct operators $H^{\Omega_1}_{\Gamma_1'}$ and $H^{\Omega_2}_{\Gamma_2'}$, defined as follows:
\begin{equation}
\begin{split}
H^{\Omega_1}_{\Gamma_1'}u := H^{\Omega_1}u\text{ for }u\in \dom (H^{\Omega_1}_{\Gamma_1'}) := \{u\in H^2(\Omega_1): \gamma_{\Gamma_1'}u = \Vec{0}\},
\\H^{\Omega_2}_{\Gamma_2'}\vu := H^{\Omega_2}\vu\text{ for }\vu\in \dom (H^{\Omega_2}_{\Gamma_2'}) := \{\vu\in H^2(\Omega_2): \gamma_{\Gamma_2'}\vu = \Vec{0}\}.
\end{split}
\end{equation}

We will now demonstrate a way to count eigenvalues of $H^{\Omega}_{\Gamma}$ by instead working with $H^{\Omega_1}_{\Gamma_1}$ and $H^{\Omega_2}_{\Gamma_2}$.

\begin{remark}\label{rem:phitheta}
We will use the following functions satisfying Dirichlet and Neumann conditions throughout the rest of this section: $\Vec{\phi}$ and $\Vec{\theta}$. They are solutions to $(H^\Omega-\lambda)\vu=\Vec{0}$ and satisfy the following boundary conditions along the $j$th edge:
\begin{align}\label{eq:42}
\theta_j(s_1) = 1 \hspace{.5in}& \phi_j(s_1) = 0\nonumber \,,\\
\theta_j'(s_1) = 0\hspace{.5in} & \phi_j'(s_1) = 1\,.
\end{align}
For convenience, all other components (except for the jth components) for $\Vec{\phi}$ and $\Vec{\theta}$ are made identically $0$.
\end{remark}

\subsubsection{$M_1$ Construction}
When constructing $M_1$, we suppose that $\lambda\in \rho(H^{\Omega_1}_{\Gamma_1})$ and $f\in \CC$. Let $u\in H^2({\Omega_1})$ solve the inhomogeneous problem
\begin{equation}\label{eq:41}
\begin{cases}
(H^{\Omega_1}-\lambda)u = 0,\\
u(s_1) = f,\\
\text{$u$ satisfies the $\Gamma$ condition at $\ell_j$}.
\end{cases}
\end{equation}
Since $u$ does not satisfy the Dirichlet condition at $s_1$, we find that $u(x_j,\lambda)=cz_{j,j}|_{\Omega_1}(x_j,\lambda)$ for some $c$, where $z_{j,j}$ is the solution satisfying the $\beta$ conditions (and thus also $\beta^{\Gamma_1}$ conditions) on $\epsilon_j$ at $x_j=\ell_j$. Then, by equation \eqref{eq:300}, we have that
\begin{equation}\label{eq:44}
M_1(\lambda) = -\frac{u'(s_1,\lambda)}{u(s_1,\lambda)}= -\frac{cz_{j,j}|_{\Omega_1}'(s_1,\lambda)}{cz_{j,j}|_{\Omega_1}(s_1,\lambda)} = -\frac{z_{j,j}'(s_1,\lambda)}{z_{j,j}(s_1,\lambda)}\,.
\end{equation}
Of course, we can rewrite this as a quotient of Wronskians by utilizing $\phi_j$ and $\theta_j$ from equation \eqref{eq:42}:
\begin{equation}\label{eq:45}
M_1(\lambda) = -\frac{W_0(\theta_j(s_1,\lambda),z_{j,j}(s_1,\lambda))}{W_0(z_{j,j}(s_1,\lambda), \phi_j(s_1,\lambda))} = \frac{W_0(\theta_j(x_j,\lambda),z_{j,j}(x_j,\lambda))}{W_0( \phi_j(x_j,\lambda),z_{j,j}(x_j,\lambda))}\,.
\end{equation}
Of course, these Wronskians are simply the determinants of the fundamental matrices for the $\Gamma_1$ and $\Gamma_1'$ problems. Thus, rewritten as Evans functions, we have
\begin{equation}\label{eq:83}
M_1(\lambda) = \frac{E^{\Omega_1}_{\Gamma_1'}(\lambda)}{E^{\Omega_1}_{\Gamma_1}(\lambda)}.
\end{equation}

\subsubsection{$M_2$ Construction}\label{sec:lambda2}
The construction of $M_2$ as a quotient of Evans functions must use a different argument since it is defined by a problem over $\Omega_2$, which has a star structure.

\begin{theorem}\label{thm:map_evans}
Let $\lambda\in \rho(H^{\Omega_2}_{\Gamma_2})$. Then 
\[
M_2(\lambda) = -\frac{E^{\Omega_2}_{\Gamma_2'}(\lambda)}{E^{\Omega_2}_{\Gamma_2}(\lambda)}.
\]
\end{theorem}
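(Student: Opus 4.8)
The plan is to mimic the $M_1$ construction from the previous subsection, but to account for the star structure of $\Omega_2$ by using the general solution formula. First I would recall that $M_2(\lambda) = u_j'(s_1,\lambda)/u_j(s_1,\lambda)$, where $\vu$ solves $(H^{\Omega_2}-\lambda)\vu = \Vec{0}$ subject to the $\Gamma_2$ conditions at all vertices other than $s_1$ (that is, the inherited $\Gamma$ conditions at the original vertices of $\Omega_2$, which all sit at the origin side). The key observation is that such a $\vu$ must be a linear combination of the complementary solutions $\vy_i|_{\Omega_2}$ built from the $\alpha$-conditions at the origin; it is \emph{not} required to satisfy the Dirichlet condition at $s_1$, so no $\vz$-type solution enters. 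Concretely, $\vu = \sum_i c_i \vy_i|_{\Omega_2}$ for suitable constants $c_i$, and only the $j$th component $u_j$ (evaluated at $s_1$, which plays the role of $\ell_j$ for $\Omega_2$) matters.

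Next I would express both $u_j(s_1,\lambda)$ and $u_j'(s_1,\lambda)$ as determinants. The denominator $E^{\Omega_2}_{\Gamma_2}(\lambda)$ is, by Definition \ref{def:evansfunction}, the determinant of the fundamental matrix $F|_{\Omega_2}$ built from $Y|_{\Omega_2}$ and $Z|_{\Omega_2}$, where the $Z$ columns are normalized at $s_1$ via the Dirichlet-type $\beta^{\Gamma_2}$ conditions there; similarly $E^{\Omega_2}_{\Gamma_2'}(\lambda)$ is the determinant of the analogous matrix with $Z$ replaced by the solutions $\hat Z$ normalized by the Neumann-type $\beta^{\Gamma_2'}$ conditions at $s_1$. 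The natural route is a Wronskian/Cramer computation: solving for the $c_i$ that realize the trace $u_j(s_1) = f$ while killing all the other $\Gamma_2$ traces is exactly an application of Cramer's rule against the matrix $C(\lambda)$ from equation \eqref{eq:22} (restricted to $\Omega_2$), whose determinant is $\pm E^{\Omega_2}_{\Gamma_2}$ by \eqref{eq:154}. One cofactor expansion gives $u_j(s_1,\lambda)$ proportional to $E^{\Omega_2}_{\Gamma_2}$ and another (with the Dirichlet row at $s_1$ swapped for the Neumann row) gives $u_j'(s_1,\lambda)$ proportional to $E^{\Omega_2}_{\Gamma_2'}$, with the proportionality constants cancelling in the ratio. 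Alternatively — and perhaps more cleanly — one can invoke Theorem \ref{thm:general_ugamma} (or a Dirichlet-trace specialization thereof) to read off $u_j(s_1,\lambda)$ and $u_j'(s_1,\lambda)$ directly in terms of the $z_{j,j}$, $|C(\lambda)|$ and the cofactor determinants $|\mathcal{C}_i(\lambda,\Vec{e}_j)|$, then identify the resulting quotient as $-E^{\Omega_2}_{\Gamma_2'}/E^{\Omega_2}_{\Gamma_2}$.

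The main obstacle I anticipate is bookkeeping the sign. In the $M_1$ case the sign comes out of $-u'/u$ and the reordering of a $2\times 2$ Wronskian; in the star-graph case the relevant determinantal identity \eqref{eq:154} already carries a $(-1)^n$, and swapping the $s_1$-Dirichlet row for the $s_1$-Neumann row (equivalently, replacing the $-h_j\Vec e_j,\ \overline{g_j}\Vec e_j$ normalization of the $j$th $\vz$ by the Neumann normalization) changes the orientation of one column of the fundamental matrix. Tracking these two contributions carefully is what produces the overall minus sign in $M_2 = -E^{\Omega_2}_{\Gamma_2'}/E^{\Omega_2}_{\Gamma_2}$, in contrast to the plus sign for $M_1$; the asymmetry is exactly the asymmetry between $s_1$ playing the role of $\ell_j$ (for $\Omega_2$) versus the role of the origin (for $\Omega_1$), and between the Neumann trace components $u_j'(s_1)$ and $-u'(s_1)$ appearing in \eqref{eq:251} and \eqref{eq:252}. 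A secondary technical point is verifying that $\lambda\in\rho(H^{\Omega_2}_{\Gamma_2})$ guarantees $u_j(s_1,\lambda)\neq 0$, so that the quotient is well-defined; this follows because $u_j(s_1,\lambda)=0$ together with the other $\Gamma_2$ conditions would force $\vu$ to be an eigenfunction of $H^{\Omega_2}_{\Gamma_2}$ (equivalently, it forces $E^{\Omega_2}_{\Gamma_2}(\lambda)=0$), contradicting $\lambda\in\rho(H^{\Omega_2}_{\Gamma_2})$. As with the earlier theorems, the final identity should be read as an equality of meromorphic functions in $\lambda$.
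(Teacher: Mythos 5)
Your primary route is genuinely different from the paper's, and it is sound in outline. The paper proves this by specializing Theorem \ref{thm:general_ugamma} (with $i=j$, using the projection identities to collapse the trace formula to the single term \eqref{eq:48}), then converting $|C|$ and $|\mathcal{C}_j|$ to fundamental determinants via Lemma \ref{thm:C_and_fundamental} and carrying out a lengthy algebraic reduction in \eqref{eq:75} involving $\phi_j$, $\theta_j$, and the complementary minors $B_1,B_2$, with the key cancellation $|\mathcal{F}_j|-(y_{\tau_j,j}B_1-y_{\tau_j,j}'B_2)=0$. Your Cramer's-rule/cofactor argument on the basis $\mathcal{Y}$ is more direct: writing $\vu=\sum_i c_i\vy_i|_{\Omega_2}$, solving the outer boundary conditions plus $u_j(s_1)=1$ for the $c_i$, and recognizing $\sum_i y_{i,j}'(s_1)\,\mathrm{cof}_{j,i}$ as the determinant with the Dirichlet row at $s_1$ swapped for the Neumann row does produce $E^{\Omega_2}_{\Gamma_2'}/E^{\Omega_2}_{\Gamma_2}$ up to sign, bypassing the resolvent machinery entirely. (Your fallback alternative --- invoking Theorem \ref{thm:general_ugamma} --- is essentially what the paper does.)

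Two concrete issues. First, the matrix your Cramer system inverts is not $C(\lambda)=\alpha_1 Z(\Vec 0)+\alpha_2 Z'(\Vec 0)$ from \eqref{eq:22}: since $\vu$ lies in the span of $\mathcal{Y}$ (it satisfies the origin conditions but \emph{not} the Dirichlet condition at $s_1$, so it is not in the span of $\mathcal{Z}$), the conditions still to be imposed are the $\beta^{\Gamma_2}$ conditions at the outer endpoints and at $s_1$, i.e.\ the \emph{other} diagonal block $\beta_1^{\Gamma_2}Y(\Vec\ell\,)+\beta_2^{\Gamma_2}Y'(\Vec\ell\,)$ of $\Tilde S_0$ in \eqref{eq:16}. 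Its determinant is still $\pm E^{\Omega_2}_{\Gamma_2}$ by the $\beta$-side analogue of Lemma \ref{thm:C_and_fundamental}, so the argument survives, but as written you are citing the wrong block. Second, and more substantively, you explicitly leave the overall sign undetermined. The minus sign is the entire content distinguishing this theorem from \eqref{eq:83}, and "tracking these two contributions carefully" is precisely the step that must be carried out; a completed proof has to fix the orientation conventions (the $(-1)^n$ from Lemma \ref{thm:C_and_fundamental}, the column ordering of Definition \ref{def:columnorder}, and the fact that $s_1$ plays the role of $\ell_j$ so the relevant Neumann trace component is $+u_j'(s_1)$ rather than $-u'(s_1)$) and verify that they combine to give $-E^{\Omega_2}_{\Gamma_2'}/E^{\Omega_2}_{\Gamma_2}$. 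Until that computation is done the proposal establishes the identity only up to sign.
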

\begin{proof}
Let $\lambda\in \rho(H^{\Omega_2}_{\Gamma_2})$. Let $\vu_{\Gamma_2,j}\in H^2({\Omega_2})$ be the unique solution to the problem
\begin{equation}\label{eq:46}
\begin{cases}
(H^{\Omega_2}-\lambda)\vu_{\Gamma_2,j} = \Vec{0},\\
\gamma_{\Gamma_2} \vu_{\Gamma_2,j} = \Vec{e}_{j}.
\end{cases}
\end{equation}
That is, $\vu_{\Gamma_2,j}$ satisfies all $\Gamma_2$ boundary conditions except the Dirichlet condition at $s_1$. Then by equation \eqref{eq:310}, we can express the map as follows:
\begin{equation}\label{eq:49}
M_2(\lambda) = \frac{(\vu_{\Gamma_2,j})'_j(s_1,\lambda)}{(\vu_{\Gamma_2,j})_j(s_1,\lambda)}=\frac{(\vu_{\Gamma_2,j})_j'(s_1,\lambda)}{1}\,.
\end{equation}
To find a more informative way of writing $(\vu_{\Gamma_2,j})'_j(s_1,\lambda)$, we can use Theorem \ref{thm:general_ugamma} with $i=j$, $\ell_j=s_1$, and fundamental $Y$ and $Z$ solutions defined on $\Omega_2$ using $\Gamma_2$ conditions.

We introduce $\delta^{\Gamma_2}_1$ and $\delta^{\Gamma_2}_2$, which are the $\delta$ matrices (see formula \eqref{eq:delta_def}) corresponding to $\Gamma_2$ boundary conditions. Due to the Dirichlet condition imposed at $s_1$, we have that the $j$th row and $j$th column of $\delta^{\Gamma_2}_1$ are both $\Vec{e}_j$, while the $j$th row and column of $\delta^{\Gamma_2}_2$ consist of zero entries. Therefore, $(\delta^{\Gamma_2}_1-i\delta^{\Gamma_2}_2)\Vec{e}_j = \Vec{e}_j$, which means that $(\delta^{\Gamma_2}_1-i\delta^{\Gamma_2}_2)^{-1}\Vec{e}_j = \Vec{e}_j$. Additionally, since $P_D$ is a projection onto $\ker \delta_2$, which $\Vec{e}_j$ is decidedly in, we have $P_D\Vec{e}_j = \Vec{e}_j$. However, since $P_D$, $\Tilde{P}_N$, and $\Tilde{P}_R$ are mutually orthogonal, we have that $\Tilde{P}_N\Vec{e}_j =\Tilde{P}_R\Vec{e}_j=\Vec{0}$. This means that $\Vec{e}_j$ is sent to zero by $L_j+M_j$, while it is passed through $N_j$ with nothing more than a sign swap. Hence, we can reduce Theorem \ref{thm:general_ugamma} for $i=j$ in this case to
\begin{equation}\label{eq:48}
(\vu_{\Gamma_2,j})_j(x_j,\lambda) = -\frac{z_{j,j}'(s_1,\lambda)(z_{j,j}(x_j,\lambda)|\mathcal{C}_j(\lambda,\Vec{e}_j)|-|C(\lambda)|y_{\tau_j,j}(x_j,\lambda))}{|C(\lambda)|D_j(\lambda)}\,.
\end{equation}
Note that in this formula, and for the rest of Section \ref{sec:lambda2}, $z_{j,j}$, $y_{\tau_j,j}$, $C$, $D_j$, etc. are all those objects related to the construction of $R_\lambda^{\Gamma_2}\vv$, as opposed to those from $R_\lambda^{\Gamma}\vv$. However, note that all of the functions except $z_{j,j}$ constructed using $\Gamma_2$ conditions are equal to their $\Gamma$ condition counterparts. 

The only portion of the Neumann trace we care about is $(\vu_{\Gamma_2,j})_j'(s_1,\lambda)$.
\begin{equation}\label{eq:62}
(\vu_{\Gamma_2,j})'_j(s_1,\lambda) = -\frac{z_{j,j}'(s_1,\lambda)(z_{j,j}'(s_1,\lambda)|\mathcal{C}_j(\lambda,\Vec{e}_j)|-|C(\lambda)|y_{\tau_j,j}'(s_1,\lambda))}{|C(\lambda)|D_j(\lambda)}\,.
\end{equation}
By Lemma \ref{thm:C_and_fundamental} we have that $|C| = (-1)^n|F|$, where $F$ is the fundamental matrix associated with the $H^{\Omega_2}_{\Gamma_2}$ operator. Note that $E^{\Omega_2}_{\Gamma_2}$ and $E^{\Omega_2}_{\Gamma_2'}$ are determinants of matrices identical to $F$ except for the $(n+j)$th column, which is replaced by $\Vec{\phi}$ and $\Vec{\theta}$, respectively. 

If we replace $z_{j,j}$ everywhere in $F$ and $C$ with $y_{\tau_j,j}$, we can again apply Lemma \ref{thm:C_and_fundamental} and get an analogous result. Of course, $C(\lambda)$ with all the $z_{j,j}$ contributions replaced by $y_{\tau_j,j}$ contributions is simply $\mathcal{C}_j(\lambda,\Vec{e}_j)$ (that is, $z_{j,j}$ and $z_{j,j}'$ are swapped out everywhere for $y_{\tau_j,j}$ and $y_{\tau_j,j}'$, respectively) as in equation \eqref{Ck}. Thus, we have that $|\mathcal{C}_j(\lambda,\Vec{e}_j)|=(-1)^n|\mathcal{F}_j(\lambda,\Vec{e}_j)|$ where $\mathcal{F}_j$ is $F$ with all $z_{j,j}$ contributions replaced with equivalent $y_{\tau_j,j}$ contributions. Then, equation \eqref{eq:62} can be rewritten as

\begin{align}\label{eq:80}
M_2(\lambda) &= (\vu_{\Gamma_2,j})'_j(s_1,\lambda)\nonumber 
\\&= -\frac{z_{j,j}'(s_1,\lambda)(z_{j,j}'(s_1,\lambda)(-1)^n|\mathcal{F}_j(\lambda,\Vec{e}_j)|-(-1)^n|F(\lambda)|y_{\tau_j,j}'(s_1,\lambda))}{(-1)^n|F(\lambda)|D_j(\lambda)}\nonumber\\
&=-\frac{z_{j,j}'(s_1,\lambda)(z_{j,j}'(s_1,\lambda)|\mathcal{F}_j(\lambda,\Vec{e}_j)|-|F(\lambda)|y_{\tau_j,j}'(s_1,\lambda))}{|F(\lambda)|D_j(\lambda)}\,.
\end{align}

Since there is a Dirichlet condition at $s_1$, we see that $z_{j,j}= \phi_j|_{\Omega_2}$ (the restriction of $\phi_j$ to $\Omega_2\cap \epsilon_j$), and thus $|F|=E^{\Omega_2}_{\Gamma_2}$. Observe that $|F|$ can be rewritten by expanding down the $\phi$ column when $x_j=0$ to give us:
\begin{equation}\label{eq:71}
|F(\lambda)| = \phi_j(0,\lambda) B_1(\lambda) - \phi_j'(0,\lambda)B_2(\lambda)\,,
\end{equation}
where $B_1$ and $B_2$ are the complementary minors picked up in Laplace expansion. Additionally, by our assumed initial conditions:
\begin{equation}\label{eq:70}
\theta_j(x_j) \phi_j'(x_j) - \theta_j'(x_j)\phi_j(x_j) = W_0(\theta_j,\phi_j) = W_0(\theta_j(s_1),\phi_j(s_1)) = \begin{vmatrix}
1 & 0 \\ 0 & 1
\end{vmatrix} = 1\,,
\end{equation}
and thus
\begin{equation}\label{eq:72}
\phi_j'(x_j) = \frac{1+\theta_j'(x_j)\phi_j(x_j)}{\theta_j(x_j)}\,.
\end{equation}
Then, by multiplying equation $\eqref{eq:80}$ by $-|F(\lambda)|$ we see:
\begin{align}\label{eq:75}
&-|F(\lambda)|M_2(\lambda)\nonumber=\frac{z_{j,j}'(s,\lambda)(z_{j,j}'(s_1,\lambda)|\mathcal{F}_j(\lambda,\Vec{e}_j)|-|F(\lambda)|y_{\tau_j,j}'(s_1,\lambda))}{D_j(\lambda)}\nonumber 
\\=& \frac{\phi_j'(s_1,\lambda)(\phi_j'(s_1,\lambda)|\mathcal{F}_j(\lambda,\Vec{e}_j)|-|F(\lambda)|y_{\tau_j,j}'(s_1,\lambda))}{W_0(y_{\tau_j,j}, z_{j,j})}\nonumber 
\\\myeq{\eqref{eq:42}}&\,\, \frac{|\mathcal{F}_j(\lambda,\Vec{e}_j)|-|F(\lambda)|y_{\tau_j,j}'(s_1,\lambda)}{W_0(y_{\tau_j,j}, \phi_j|_{\Omega_2})}\nonumber=\frac{|\mathcal{F}_j(\lambda,\Vec{e}_j)|-|F(\lambda)|\begin{vmatrix}
1& y_{\tau_j,j}(s_1,\lambda)\\
0 & y_{\tau_j,j}'(s_1,\lambda)
\end{vmatrix}}{y_{\tau_j,j}(0,\lambda)\phi_j'(0,\lambda)-y_{\tau_j,j}'(0,\lambda)\phi_j(0,\lambda)}\nonumber
\\ \myeq{\eqref{eq:71},\eqref{eq:72}}&\,\,\,\,\,\,\,\frac{|\mathcal{F}_j(\lambda,\Vec{e}_j)|-( \phi_j(0,\lambda) B_1(\lambda) - \phi_j'(0,\lambda)B_2(\lambda))\begin{vmatrix}
\theta_j(0,\lambda)& y_{\tau_j,j}(0,\lambda)\\
\theta_j'(0,\lambda) & y_{\tau_j,j}'(0,\lambda)
\end{vmatrix}}{y_{\tau_j,j}(0,\lambda)\frac{1+\theta_j'(0,\lambda)\phi_j(0,\lambda)}{\theta_j(0,\lambda)}-y_{\tau_j,j}'(0,\lambda)\phi_j(0,\lambda)}\nonumber
\\
&\text{(From here on out, all variables stay the same, so we omit dependencies)}\nonumber
\\=&\frac{|\mathcal{F}_j|-( \phi_j B_1 - \phi_j'B_2)
(\theta_j y_{\tau_j,j}'- y_{\tau_j,j}\theta_j')}{\frac{y_{\tau_j,j}}{\theta_j}+\phi_j(\frac{\theta_j'}{\theta_j}y_{\tau_j,j} - y_{\tau_j,j}')}\nonumber
\\=&\frac{|\mathcal{F}_j|-( \phi_j B_1 - \phi_j'B_2)
(-\theta_j)(y_{\tau_j,j}\frac{\theta_j'}{\theta_j}-y_{\tau_j,j}')}{\frac{y_{\tau_j,j}}{\theta_j}+\phi_j(\frac{\theta_j'}{\theta_j}y_{\tau_j,j} - y_{\tau_j,j}')}\nonumber
\\=&\frac{|\mathcal{F}_j|+  \theta_j B_1\phi_j(y_{\tau_j,j}\frac{\theta_j'}{\theta_j}-y_{\tau_j,j}') - \theta_j B_2\phi_j'(y_{\tau_j,j}\frac{\theta_j'}{\theta_j}-y_{\tau_j,j}')
}{\frac{y_{\tau_j,j}}{\theta_j}+\phi_j(\frac{\theta_j'}{\theta_j}y_{\tau_j,j} - y_{\tau_j,j}')}\nonumber
\\=&\frac{|\mathcal{F}_j|+  \theta_j B_1(\frac{y_{\tau_j,j}}{\theta_j}-\frac{y_{\tau_j,j}}{\theta_j}+\phi_j(y_{\tau_j,j}\frac{\theta_j'}{\theta_j}-y_{\tau_j,j}'))}{\frac{y_{\tau_j,j}}{\theta_j}+\phi_j(\frac{\theta_j'}{\theta_j}y_{\tau_j,j} - y_{\tau_j,j}')}\nonumber
\\&-\frac{B_2\theta_j(\frac{1+\theta_j'\phi_j}{\theta_j})(y_{\tau_j,j}\frac{\theta_j'}{\theta_j}-y_{\tau_j,j}')
}{\frac{y_{\tau_j,j}}{\theta_j}+\phi_j(\frac{\theta_j'}{\theta_j}y_{\tau_j,j} - y_{\tau_j,j}')}\nonumber
\\=&\theta_j B_1+\frac{|\mathcal{F}_j|  -\theta_j B_1\frac{y_{\tau_j,j}}{\theta_j} - B_2(y_{\tau_j,j}\frac{\theta_j'}{\theta_j}-y_{\tau_j,j}') - \theta_j'B_2\phi_j(y_{\tau_j,j}\frac{\theta_j'}{\theta_j}-y_{\tau_j,j}')
}{\frac{y_{\tau_j,j}}{\theta_j}+\phi_j(\frac{\theta_j'}{\theta_j}y_{\tau_j,j} - y_{\tau_j,j}')}\nonumber
\\=&\theta_j B_1 +\frac{|\mathcal{F}_j|  -\theta_j B_1\frac{y_{\tau_j,j}}{\theta_j} - B_2(y_{\tau_j,j}\frac{\theta_j'}{\theta_j}-y_{\tau_j,j}')
}{\frac{y_{\tau_j,j}}{\theta_j}+\phi_j(\frac{\theta_j'}{\theta_j}y_{\tau_j,j} - y_{\tau_j,j}')}\nonumber
\\&-\frac{\theta_j'B_2(\frac{y_{\tau_j,j}}{\theta_j}- \frac{y_{\tau_j,j}}{\theta_j}+\phi_j(y_{\tau_j,j}\frac{\theta_j'}{\theta_j}-y_{\tau_j,j}')
)}{\frac{y_{\tau_j,j}}{\theta_j}+\phi_j(\frac{\theta_j'}{\theta_j}y_{\tau_j,j} - y_{\tau_j,j}')}\nonumber
\\=&\theta_j B_1-\theta_j'B_2+\frac{|\mathcal{F}_j|  -\theta_j B_1\frac{y_{\tau_j,j}}{\theta_j} - B_2(y_{\tau_j,j}\frac{\theta_j'}{\theta_j}-y_{\tau_j,j}')+ \frac{y_{\tau_j,j}}{\theta_j}\theta_j'B_2
}{\frac{y_{\tau_j,j}}{\theta_j}+\phi_j(\frac{\theta_j'}{\theta_j}y_{\tau_j,j} - y_{\tau_j,j}')}\nonumber
\\=&\theta_j B_1-\theta_j'B_2+\frac{|\mathcal{F}_j|  - y_{\tau_j,j}B_1  +y_{\tau_j,j}'B_2}{\frac{y_{\tau_j,j}}{\theta_j}+\phi_j(\frac{\theta_j'}{\theta_j}y_{\tau_j,j} - y_{\tau_j,j}')}\,.
\end{align}
As discussed previously, $\mathcal{F}_j$ is simply $F$ with all the $z_{j,j}$ contributions swapped for $y_{\tau_j,j}$ contributions. By the definition of $B_1$ and $B_2$, the difference $y_{\tau_j,j}B_1- y_{\tau_j,j}'B_2$ is precisely the determinant of the fundamental solution matrix with $z_{j,j}$ swapped out for $y_{\tau_j,j}$. So, we certainly have $|\mathcal{F}_j|-(y_{\tau_j,j}B_1 - y_{\tau_j,j}'B_2)=0$, and thus we can represent $M_2$ as our desired quotient. Indeed, dividing equation \eqref{eq:75} by $-|F(\lambda)|$ tells us that
\begin{equation}\label{eq:79}
M_2(\lambda) = -\frac{\theta_j(0,\lambda)B_1(\lambda)-\theta_j'(0,\lambda)B_2(\lambda)}{|F(\lambda)|}=-\frac{E^{\Omega_2}_{\Gamma_2'}(\lambda)}{E^{\Omega_2}_{\Gamma_2}(\lambda)}\,.
\end{equation}
\end{proof}

\subsubsection{An Evans Function Relation}
We are now prepared to prove Theorem \ref{thm:determinant_equivalence}.
\begin{proof}
We know from equation \eqref{eq:83} and Theorem \ref{thm:map_evans} that we can represent $M_1$ and $M_2$ as the quotients of Evans functions:
\begin{align}\label{eq:76}
\begin{split}
M_1(\lambda) = \frac{E^{\Omega_1}_{\Gamma_1'}(\lambda)}{E^{\Omega_1}_{\Gamma_1}(\lambda)} \, ,
\\ M_2(\lambda) = -\frac{E^{\Omega_2}_{\Gamma_2'}(\lambda)}{E^{\Omega_2}_{\Gamma_2}(\lambda)} \, ,
\end{split}
\end{align}
Then, multiplying $M_1+M_2$ by $E^{\Omega_1}_{\Gamma_1}E^{\Omega_2}_{\Gamma_2}$ and applying equation \eqref{eq:76}, we see:
\begin{equation}\label{eq:77}
(M_1(\lambda)+M_2(\lambda))E^{\Omega_1}_{\Gamma_1}(\lambda)E^{\Omega_2}_{\Gamma_2}(\lambda) = E^{\Omega_1}_{\Gamma_1'}(\lambda)E^{\Omega_2}_{\Gamma_2}(\lambda) - E^{\Omega_2}_{\Gamma_2'}(\lambda)E^{\Omega_1}_{\Gamma_1}(\lambda) \,.
\end{equation}
Since $z_{j,j}(x_j,\lambda)$ is the initial value problem solution satisfying the $\Gamma$ condition associated with the vertex $x_j=\ell_j$, and $\Vec{\phi}$ and $\Vec{\theta}$ are as defined in Equation \eqref{eq:42}, we can evaluate the $2\times 2$ determinants $E^{\Omega_1}_{\Gamma_1}$ and $E^{\Omega_1}_{\Gamma_1'}$ at $x_j=s_1$ to obtain:
\begin{align}\label{eq:78}
&E^{\Omega_1}_{\Gamma_1'}(\lambda)E^{\Omega_2}_{\Gamma_2}(\lambda) - E^{\Omega_2}_{\Gamma_2'}(\lambda)E^{\Omega_1}_{\Gamma_1}(\lambda) \nonumber
\\=& \begin{vmatrix}
\theta_j(s_1,\lambda) & z_{j,j}(s_1,\lambda) \\
\theta_j(s_1,\lambda) & z_{j,j}'(s_1,\lambda)
\end{vmatrix}E^{\Omega_2}_{\Gamma_2}(\lambda) - E^{\Omega_2}_{\Gamma_2'}(\lambda)\begin{vmatrix}
\phi_j(s_1,\lambda) & z_{j,j}(s_1,\lambda) \\
\phi_j'(s_1,\lambda) & z_{j,j}'(s_1,\lambda)
\end{vmatrix}\nonumber
\\=& z_{j,j}'(s_1,\lambda)E^{\Omega_2}_{\Gamma_2}(\lambda)+E^{\Omega_2}_{\Gamma_2'}(\lambda)z_{j,j}(s_1,\lambda) \,.
\end{align}
We recall that $E^{\Omega_2}_{\Gamma_2}$ and $E^{\Omega_2}_{\Gamma_2'}$ are both determinants whose every column is equal to the the same column in the fundamental matrix $F$ for the $\Gamma$ problem except the $(n+j)$th, which is either $\Vec{\phi}$ or $\Vec{\theta}$, respectively. By evaluating these determinants at $x_j=s_1$, we can multiply the $z'_{j,j}$ and $z_{j,j}$ terms into the $(n+j)$th columns of $E_{\Gamma_2}$ and $E_{\Gamma_2'}$, respectively, to turn the expression from equation \eqref{eq:78} into the determinant of $F$. So
\begin{equation}\label{eq:84}
(M_1+M_2)E^{\Omega_1}_{\Gamma_1}E^{\Omega_2}_{\Gamma_2} = E^{\Omega_1}_{\Gamma_1'}E^{\Omega_2}_{\Gamma_2} - E^{\Omega_2}_{\Gamma_2'}E^{\Omega_1}_{\Gamma_1} = |F| = E^{\Omega}_{\Gamma}.
\end{equation}
Dividing this equation by $E^{\Omega_1}_{\Gamma_1}E^{\Omega_2}_{\Gamma_2}$  proves our desired equality.
\end{proof}
Since there is a one-to-one correspondence between the zeros of $E^{\Omega}_{\Gamma}$ and the eigenvalues of $H^{\Omega}_\Gamma$ (including algebraic multiplicities), Theorem \ref{thm:counting} clearly follows from Theorem \ref{thm:determinant_equivalence}.

\subsection{Double Split on One Wire}
\begin{figure}[H]
    \centering
    \includegraphics[width=1.5in]{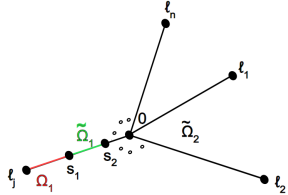}
    \caption{Splitting $\Omega$ into three subgraphs with two cuts on one wire.}
    \label{fig:10}
\end{figure}
So far our eigenvalue counting theorem is only proven for cases in which $\Omega_1$ and $\Omega_2$ meet at one point. We will now extend this result to cases where we split twice on one wire.

Let $s_1$ and $s_2$ be interior points of the $j$th wire of $\Omega$ such that $s_1>s_2$. We can split $\Omega$ into three subgraphs as in Definition \ref{def:secondsplit}. Then by Theorem $\ref{thm:determinant_equivalence}$, we have that
\begin{equation}\label{eq:111}
E^{\Omega}_\Gamma = E^{\Omega_1}_{\Gamma_1}E^{\Omega_2}_{\Gamma_2}(M_1+M_2),
\end{equation}
where $M_1+M_2$ is the two-sided map constructed at $s_1$. In a later proof, it will be beneficial to break up maps and Evans functions built for $\Omega_2$. This will require the introduction of some new boundary conditions.
\begin{definition}\label{def:moreops}
Let $\Tilde{\Gamma}_1^{DD}$, $\Tilde{\Gamma}_1^{DN}$, $\Tilde{\Gamma}_1^{ND}$, and $\Tilde{\Gamma}_1^{NN}$ be boundary conditions defined over $\Tilde{\Omega}_1$. A superscript of $D$ assigns the Dirichlet condition $u(s_i)=0$, a superscript of $N$ assigns the Neumann condition $u'(s_i) =0$, where $i$ corresponds to the position in which the letter sits. So for example, $\Tilde{\Gamma}_1^{DN}$ represents the boundary conditions $u(s_1)=u'(s_2)=0$. Note that $\Tilde{\Gamma}_1^{DD}$ is identical to the set of conditions called $\Tilde{\Gamma}_1$ in Definition \ref{def:secondsplit}. We also have $\Tilde{\Gamma}_2$ and $\Tilde{\Gamma}_2'$ defined over $\Tilde{\Omega}_2$, which are identical to $\Gamma_2$ at all common vertices. For $\Tilde{\Gamma}_2$, the Dirichlet condition $u_j(s_2) = 0$ is applied; for $\Tilde{\Gamma}_2'$, the Neumann condition $u_j'(s_2)=0$ is applied.
\end{definition}

\begin{remark}\label{rem:opdef}
At this point we have a standard way of building operators and Evans functions based on sets of boundary conditions on subgraphs. Given boundary conditions $\Gamma_0$ defined over some subgraph $\Omega^*$, we can let $H^{\Omega^*}$ be the restriction of $H^{\Omega}$ to $\Omega^*$. Then we can define the operator $H^{\Omega^*}_{\Gamma_0}$ as follows:
\[
H^{\Omega^*}_{\Gamma_0}\vu := H^{\Omega^*}\vu\text{ for }\vu\in \dom (H^{\Omega^*}_{\Gamma_0}) := \{\vu\in H^2(\Omega^*): \gamma_{\Gamma_0}\vu = \Vec{0}\}.
\]
We let the Evans function associated with $H^{\Omega^*}_{\Gamma_0}$ be denoted by $E^{\Omega^*}_{\Gamma_0}$.
\end{remark}

With this new notation, we can split $\Omega_2$ at $s_2$ to get the three desired subgraphs, and another application of Theorem $\ref{thm:determinant_equivalence}$ gives us that
\begin{equation}\label{eq:112}
E^{\Omega_2}_{\Gamma_2} = E^{\Tilde{\Omega}_1}_{\Tilde{\Gamma}_1^{DD}}E^{\Tilde{\Omega}_2}_{\Tilde{\Gamma}_2}(\Tilde{M}_1+\Tilde{M}_2),
\end{equation}
where $\Tilde{M}_1+\Tilde{M}_2$ is a two sided map defined at $s_2$ with a Dirichlet condition at $s_1$.

Combining equations \eqref{eq:111} and \eqref{eq:112}, we find that:
\begin{align}\label{eq:250}
\begin{split}
E^{\Omega}_\Gamma &= E^{\Omega_1}_{\Gamma_1}E^{\Omega_2}_{\Gamma_2}(M_1+M_2)\\
&=E^{\Omega_1}_{\Gamma_1}E^{\Tilde{\Omega}_1}_{\Tilde{\Gamma}_1^{DD}}E^{\Tilde{\Omega}_2}_{\Tilde{\Gamma}_2}(\Tilde{M}_1+\Tilde{M}_2)(M_1+M_2).
\end{split}
\end{align}
We will now define the maps $\M_1$ and $\M_2$, the two-dimensional extensions of the one-sided maps. 
\begin{definition}\label{def:onewireMs}
Let $\lambda\in \rho(H^{\Tilde{\Omega}_1}_{\Tilde{\Gamma}_1^{DD}})$ and fix $\vf\in \CC^2$. Then, we define $\M_1(\lambda):\CC^2\rightarrow \CC^2$ as follows:
\[
\M_1(\lambda)\vf := \begin{bmatrix}
u'(s_1,\lambda)\\
-u'(s_2,\lambda)
\end{bmatrix},
\]
where $u$ is the unique solution to the boundary value problem
\[
\begin{cases}
(H^{\Tilde{\Omega}_1}-\lambda)u = 0,\\
\gamma_{\Tilde{\Gamma}_1^{DD}}u = \vf.
\end{cases}
\]
Likewise, for $\lambda\in \rho(H^{\Omega_1}_{\Gamma_1})\cap\rho(H^{\Tilde{\Omega}_2}_{\Tilde{\Gamma}_2})$ and $\vf\in \CC^2$, we can instead define $\M_2(\lambda):\CC^2\rightarrow \CC^2$ as follows
\[
\M_2(\lambda)\vf :=
\begin{bmatrix}
-u'(s_1,\lambda)\\
u_j'(s_2,\lambda)
\end{bmatrix},
\]
where $u$ is the unique solution to the boundary value problem
\[
\begin{cases}
(H^{\Omega_1}-\lambda)u = 0,\\
\gamma_{\Gamma_1}u = \begin{bmatrix}
0\\f_1    
\end{bmatrix}.
\end{cases}
\]
and $u_j$ is the $j$th component of the unique solution $\vu$ to the boundary value problem
\[
\begin{cases}
(H^{\Tilde{\Omega}_2}-\lambda)\vu = \Vec{0},\\
u_j(s_2) = f_2,\\
\text{$\vu$ satisfies all other $\Tilde{\Gamma}_2$ conditions.}
\end{cases}
\]
\end{definition}

\begin{remark}
$\M_1$ and $\M_2$ are meant to be $2\times 2$ analogs of the one-sided maps $M_1$ and $M_2$. We can utilize some of the previously constructed maps to rewrite construct $\M_2$ as follows:
\begin{equation}\label{eq:120}
\M_2(\lambda) := \begin{bmatrix}
M_1(\lambda) & 0\\
0 & \Tilde{M}_2(\lambda)
\end{bmatrix}.
\end{equation}
Additionally, we can get a more concrete handle on $\M_1$ using the following construction
\begin{equation}\label{eq:124}
\M_1(\lambda) := \begin{bmatrix}
\frac{u'(s_1,\lambda)}{u(s_1,\lambda)} & \frac{w'(s_1,\lambda)}{w(s_2,\lambda)}\\
-\frac{u'(s_2,\lambda)}{u(s_1,\lambda)} & -\frac{w'(s_2,\lambda)}{w(s_2,\lambda)}
\end{bmatrix},
\end{equation}
where $u$ and $w$ are the solutions to the boundary value problems:
\begin{equation}\label{eq:122}
\begin{split}
\begin{cases}
(H^{\Tilde{\Omega}_1}-\lambda)u = 0,\\
u(s_1) = 1,\\
u(s_2) = 0,
\end{cases}
\\
\begin{cases}
(H^{\Tilde{\Omega}_1}-\lambda)w = 0,\\
w(s_1) = 0,\\
w(s_2) = 1.
\end{cases}
\end{split}
\end{equation}
Note that these constructions of $\M_1$ and $\M_2$ are identical to those produced by the the definitions.
\end{remark}

\begin{theorem}\label{thm:twosplitonewire}
Suppose $\Omega$ is split (as described in Definition \ref{def:secondsplit}) into three quantum subgraphs $\Omega_1$, $\Tilde{\Omega}_1$, and $\Tilde{\Omega}_2$, at the points $0<s_2<s_1<\ell_j$ for some $1\leq j\leq n$. Additionally, suppose that $\lambda\in\rho(H^{\Omega_1}_{\Gamma_1})\cap \rho(H^{\Tilde{\Omega}_1}_{\Tilde{\Gamma}_1})\cap \rho(H^{\Tilde{\Omega}_2}_{\Tilde{\Gamma}_2})$, and let $\M_1+\M_2$ be the two-sided map associated with $s_1$ and $s_2$. Then
\begin{equation}
\frac{E^{\Omega}_{\Gamma}(\lambda)}{E^{\Omega_1}_{\Gamma_1}(\lambda)E^{\Tilde{\Omega}_1}_{\Tilde{\Gamma}_1}(\lambda)E^{\Tilde{\Omega}_2}_{\Tilde{\Gamma}_2}(\lambda)} = |(\M_1+\M_2)(\lambda)|,
\end{equation}
where $E^{\Omega}_{\Gamma}$, $E^{\Omega_1}_{\Gamma_1}$, $E^{\Tilde{\Omega}_1}_{\Tilde{\Gamma}_1}$, and $E^{\Tilde{\Omega}_2}_{\Tilde{\Gamma}_2}$ are the Evans functions associated with the operators $H^{\Omega}_{\Gamma}$, $H^{\Omega_1}_{\Gamma_1}$, $H^{\Tilde{\Omega}_1}_{\Tilde{\Gamma}_1}$, and $H^{\Tilde{\Omega}_2}_{\Tilde{\Gamma}_2}$, respectively.
\end{theorem}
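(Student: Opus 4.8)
The plan is to reduce the statement to a single scalar identity and then verify it by a short determinant expansion. By Definition \ref{def:moreops} the conditions $\Tilde{\Gamma}_1^{DD}$ are exactly the conditions called $\Tilde{\Gamma}_1$ in Definition \ref{def:secondsplit}, so the already-established factorization \eqref{eq:250} reads
\[
\frac{E^{\Omega}_{\Gamma}(\lambda)}{E^{\Omega_1}_{\Gamma_1}(\lambda)E^{\Tilde{\Omega}_1}_{\Tilde{\Gamma}_1}(\lambda)E^{\Tilde{\Omega}_2}_{\Tilde{\Gamma}_2}(\lambda)} = \big(\Tilde{M}_1+\Tilde{M}_2\big)(\lambda)\,\big(M_1+M_2\big)(\lambda),
\]
where $M_1+M_2$ is the two-sided map at $s_1$ for $\Omega=\Omega_1\cup\Omega_2$ and $\Tilde{M}_1+\Tilde{M}_2$ is the two-sided map at $s_2$, with a Dirichlet condition imposed at $s_1$, for $\Omega_2=\Tilde{\Omega}_1\cup\Tilde{\Omega}_2$. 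Hence it remains to prove, as an identity of meromorphic functions, that $\big(\Tilde{M}_1+\Tilde{M}_2\big)(\lambda)\,\big(M_1+M_2\big)(\lambda) = \big|(\M_1+\M_2)(\lambda)\big|$.

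I would next make every scalar explicit in terms of the two interval solutions $u,w$ on $\Tilde{\Omega}_1=[s_2,s_1]$ from \eqref{eq:122}, normalized so that $u(s_1)=1,\ u(s_2)=0$ and $w(s_1)=0,\ w(s_2)=1$. A solution on $\Tilde{\Omega}_1$ with $\Tilde{\Gamma}_1^{DD}$-trace $(f_1,f_2)^t$ is $f_1u+f_2w$, so the construction \eqref{eq:124} of $\M_1$ becomes the constant matrix with rows $\big(u'(s_1),\,w'(s_1)\big)$ and $\big(-u'(s_2),\,-w'(s_2)\big)$, while \eqref{eq:120} gives $\M_2=\diag\big(M_1,\Tilde{M}_2\big)$. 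Solving the problem defining $\Tilde{M}_1$, whose Dirichlet condition at $s_1$ forces the relevant solution on $\Tilde{\Omega}_1$ to be a multiple of $w$, yields $\Tilde{M}_1(\lambda)=-w'(s_2,\lambda)$, so $\Tilde{M}_1+\Tilde{M}_2$ is precisely the lower-right entry of $\M_1+\M_2$. Therefore
\[
\M_1+\M_2 = \begin{bmatrix} u'(s_1)+M_1 & w'(s_1)\\ -u'(s_2) & \Tilde{M}_1+\Tilde{M}_2 \end{bmatrix},\qquad \big|\M_1+\M_2\big| = \big(u'(s_1)+M_1\big)\big(\Tilde{M}_1+\Tilde{M}_2\big)+w'(s_1)\,u'(s_2).
\]

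The key remaining ingredient is a composition formula expressing $M_2$, the Dirichlet-to-Neumann map at $s_1$ for the whole subgraph $\Omega_2$, through $\Tilde{M}_2$ and $u,w$. I would obtain it by gluing at the interior point $s_2$: the solution of the $\Omega_2$-problem with value $f$ at $s_1$ restricts on $\Tilde{\Omega}_1$ to $fu+gw$, where $g$ is its value at $s_2$, and on $\Tilde{\Omega}_2$ to a $\Tilde{\Gamma}_2$-solution whose $j$th component has value $g$ at $s_2$; since $s_2$ is not a vertex of $\Omega_2$, matching first derivatives there gives $f\,u'(s_2)+g\,w'(s_2)=\Tilde{M}_2(\lambda)g$, i.e. $g=f\,u'(s_2)/\big(\Tilde{M}_1+\Tilde{M}_2\big)$, and then $M_2(\lambda)f=f\,u'(s_1)+g\,w'(s_1)$ gives
\[
M_2(\lambda) = u'(s_1,\lambda)+\frac{u'(s_2,\lambda)\,w'(s_1,\lambda)}{\big(\Tilde{M}_1+\Tilde{M}_2\big)(\lambda)}.
\]
Substituting this into $\big(\Tilde{M}_1+\Tilde{M}_2\big)\big(M_1+M_2\big)$ clears the denominator and returns exactly $\big(u'(s_1)+M_1\big)\big(\Tilde{M}_1+\Tilde{M}_2\big)+w'(s_1)\,u'(s_2)$, which is $\big|\M_1+\M_2\big|$. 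As all the manipulations are between meromorphic functions, it suffices to run the argument for $\lambda$ outside the (discrete) spectra of every operator appearing and then extend by analytic continuation, exactly as for Theorem \ref{thm:determinant_equivalence}.

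I expect the only genuine obstacle to be the orientation bookkeeping at $s_2$: getting the signs right in the normal-derivative conventions defining $\Tilde{M}_1$ and $\Tilde{M}_2$, and checking carefully that the restriction of the $\Omega_2$-solution to $\Tilde{\Omega}_1$ is genuinely the combination $fu+gw$ with $g$ pinned down by the $C^1$-matching. Once that setup is fixed, everything else is the one-line $2\times2$ determinant expansion indicated above.
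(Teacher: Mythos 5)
Your proposal is correct, and for the crux of the argument it takes a genuinely different route from the paper. Both proofs start identically: reduce via \eqref{eq:250} to the scalar identity $(\Tilde{M}_1+\Tilde{M}_2)(M_1+M_2)=|\M_1+\M_2|$, and both observe that $\Tilde{M}_1+\Tilde{M}_2$ is the lower-right entry of $\M_1+\M_2$. From there the paper proceeds by expressing the entries of $\M_1$ as Wronskian quotients of the normalized solutions $\phi_1,\phi_2,\theta_1,\theta_2$, identifying the four Evans functions $E^{\Tilde{\Omega}_1}_{\Tilde{\Gamma}_1^{DD}},E^{\Tilde{\Omega}_1}_{\Tilde{\Gamma}_1^{DN}},E^{\Tilde{\Omega}_1}_{\Tilde{\Gamma}_1^{ND}},E^{\Tilde{\Omega}_1}_{\Tilde{\Gamma}_1^{NN}}$ with those Wronskians, invoking Theorem \ref{thm:determinant_equivalence} a third time (with a Neumann condition at $s_1$) to decompose $E^{\Omega_2}_{\Gamma_2'}$ and hence $M_2$ through the auxiliary maps $\hat{M}_1,\hat{M}_2$, and finally expanding both sides and matching term by term, using $W(\theta_2,\phi_2)=1$ along the way. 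You instead derive the composition formula $M_2=u'(s_1)+u'(s_2)w'(s_1)/(\Tilde{M}_1+\Tilde{M}_2)$ directly by $C^1$-gluing the $\Omega_2$-solution at the interior point $s_2$, which exhibits $(\Tilde{M}_1+\Tilde{M}_2)(M_1+M_2)$ as precisely the Schur-complement factorization of $|\M_1+\M_2|$ and makes the $2\times 2$ determinant identity immediate. I checked the signs: with the paper's conventions ($s_2$ playing the role of $\ell_j$ for $\Tilde{\Omega}_2$, so $\Tilde{M}_2g$ is the one-sided derivative at $s_2$ from the $\Tilde{\Omega}_2$ side, and $M_2f$ the derivative at $s_1$ from inside $\Omega_2$), your matching condition and the resulting formula are correct, and they agree with the paper's \eqref{eq:125} and \eqref{eq:131} after normalizing $u=\phi_2/\phi_2(s_1)$, $w=\phi_1/\phi_1(s_2)$. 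Your approach buys a shorter and more conceptual proof that avoids the auxiliary decomposition of $E^{\Omega_2}_{\Gamma_2'}$ and the maps $\hat{M}_1,\hat{M}_2$ entirely; the paper's Wronskian/Evans-function bookkeeping, on the other hand, is the template it reuses almost verbatim in the two-wire case (Theorem \ref{thm:twosplittwowire}), where the gluing point is the star vertex and your scalar transfer argument would need to be upgraded. Do make explicit, as you partly do, that writing the restriction to $[s_2,s_1]$ as $f\,u+g\,w$ uses $\lambda\in\rho(H^{\Tilde{\Omega}_1}_{\Tilde{\Gamma}_1})$, and that the division by $\Tilde{M}_1+\Tilde{M}_2$ is legitimate off a discrete set, after which meromorphic continuation finishes the argument exactly as you indicate.
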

\begin{proof}
The desired result can be obtained from equation \eqref{eq:250}. We claim that the corrective term $(\Tilde{M}_1+\Tilde{M}_2)(M_1+M_2)$ is precisely the determinant of the two-sided map $\M_1 + \M_2$. First, we construct several key functions. Let $\phi_1,\phi_2,\theta_1,\theta_2$ all be solutions to $(H^{\Tilde{\Omega}_1}-\lambda)u=0$ defined over $\Tilde{\Omega}_1$, which satisfy the following conditions:
\begin{align}\label{eq:121}
& \phi_1(s_1) = 0 & \theta_1(s_1) = 1 & &\phi_2(s_2) = 0 && \theta_2(s_2) = 1,\nonumber\\
& \phi_1'(s_1) = 1 & \theta_1'(s_1) = 0 && \phi_2'(s_2) = 1 && \theta_2'(s_2) = 0.
\end{align}

Consider the functions $u$ and $w$ from equation \eqref{eq:122}. Since $u$ satisfies the Dirichlet condition at $s_2$ but not $s_1$, it is parallel to $\phi_2$, so there exists $\sigma_2$ such that $u(x_j) = \sigma_2\phi_2(x_j)$. Similarly, since $w$ satisfies Dirichlet conditions at $s_1$ by not $s_2$, there exists some $\sigma_1$ such that $w(x_j) = \sigma_1\phi_1(x_j)$. Then we can rewrite $\M_1$ as
\begin{align}\label{eq:125}
\M_1(\lambda) &= \begin{bmatrix}
\frac{u'(s_1,\lambda)}{u(s_1,\lambda)} & \frac{w'(s_1,\lambda)}{w(s_2,\lambda)}\\
-\frac{u'(s_2,\lambda)}{u(s_1,\lambda)} & -\frac{w'(s_2,\lambda)}{w(s_2,\lambda)}
\end{bmatrix} = \begin{bmatrix}
\frac{\phi_2'(s_1,\lambda)}{\phi_2(s_1,\lambda)} & \frac{\phi_1'(s_1,\lambda)}{\phi_1(s_2,\lambda)}\\
-\frac{\phi_2'(s_2,\lambda)}{\phi_2(s_1,\lambda)} & -\frac{\phi_1'(s_2,\lambda)}{\phi_1(s_2,\lambda)}
\end{bmatrix}\nonumber\\
&= \begin{bmatrix}
-\frac{W(\theta_1,\phi_2)}{W(\phi_1,\phi_2)} & \frac{1}{W(\phi_1,\phi_2)}\\
\frac{1}{W(\phi_1,\phi_2)} & \frac{W(\phi_1,\theta_2)}{W(\phi_1,\phi_2)}.
\end{bmatrix} = \begin{bmatrix}
-\frac{W(\phi_2,\theta_1)}{W(\phi_2,\phi_1)} & -\frac{1}{W(\phi_2,\phi_1)}\\
-\frac{1}{W(\phi_2,\phi_1)} & \frac{W(\theta_2,\phi_1)}{W(\phi_2,\phi_1)}
\end{bmatrix}.
\end{align}
We quickly note that, by definition,
\begin{align*}
&E^{\Tilde{\Omega}_1}_{\Tilde{\Gamma}_1^{DD}}  = W(\phi_2,\phi_1)
&E^{\Tilde{\Omega}_1}_{\Tilde{\Gamma}_1^{DN}} = W(\theta_2, \phi_1),\\
&E^{\Tilde{\Omega}_1}_{\Tilde{\Gamma}_1^{ND}}  = W(\phi_2,\theta_1)
&E^{\Tilde{\Omega}_1}_{\Tilde{\Gamma}_1^{NN}}  = W(\theta_2,\theta_1).
\end{align*}
By equation \eqref{eq:83} adjusted to a subgraph $\Tilde{\Omega}_1$, we also have that
\begin{equation}\label{eq:127}
\Tilde{M}_1 = \frac{E^{\Tilde{\Omega}_1}_{\Tilde{\Gamma}_1^{DN}}}{E^{\Tilde{\Omega}_1}_{\Tilde{\Gamma}_1^{DD}}} = \frac{W(\theta_2,\phi_1)}{W(\phi_2,\phi_1)}.
\end{equation}
We additionally note that $\Tilde{M}_1=-\frac{w'(s_2,\lambda)}{w(s_2,\lambda)}$, and can be substituted as such into $\M_1$. The final note we make is that 
\begin{equation}\label{eq:129}
1=\begin{vmatrix}
1 & 0 \\
0 & 1
\end{vmatrix} = \begin{vmatrix}
\theta_2(s_2) & \phi_2(s_2)\\
\theta_2'(s_2) & \phi_2'(s_2)
\end{vmatrix} = W(\theta_2,\phi_2) = \theta_2\phi_2'-\theta_2'\phi_2.
\end{equation}
\\
We can now begin calculating the determinant of the two-sided map.
\begin{align}\label{eq:126}
&|\M_1+\M_2| \,\,\,\,\,\myeq{\eqref{eq:120},\eqref{eq:125}}\,\,\,\,\, \begin{vmatrix}
-\frac{W(\phi_2,\theta_1)}{W(\phi_2,\phi_1)}+M_1 & -\frac{1}{W(\phi_2,\phi_1)}\\
-\frac{1}{W(\phi_2,\phi_1)} & \Tilde{M}_1+\Tilde{M}_2
\end{vmatrix}\nonumber\\
&=(-\frac{W(\phi_2,\theta_1)}{W(\phi_2,\phi_1)}+M_1)(\Tilde{M}_1+\Tilde{M}_2)- \frac{1}{W(\phi_2,\phi_1)^2}\nonumber\\
&=-\frac{W(\phi_2,\theta_1)}{W(\phi_2,\phi_1)}(\Tilde{M}_1+\Tilde{M}_2)+M_1(\Tilde{M}_1+\Tilde{M}_2)- \frac{1}{W(\phi_2,\phi_1)^2}\nonumber\\
&\myeq{\eqref{eq:127}}-\frac{W(\phi_2,\theta_1)}{W(\phi_2,\phi_1)}\frac{W(\theta_2,\phi_1)}{W(\phi_2,\phi_1)}-\frac{W(\phi_2,\theta_1)}{W(\phi_2,\phi_1)}\Tilde{M}_2+M_1(\Tilde{M}_1+\Tilde{M}_2)- \frac{1}{W(\phi_2,\phi_1)^2}\nonumber\\
&=-\frac{E^{\Tilde{\Omega}_1}_{\Tilde{\Gamma}_1^{ND}}}{E^{\Tilde{\Omega}_1}_{\Tilde{\Gamma}_1^{DD}}}\Tilde{M}_2+M_1(\Tilde{M}_1+\Tilde{M}_2)-\frac{1}{E^{\Tilde{\Omega}_1}_{\Tilde{\Gamma}_1^{DD}}}\left (\frac{W(\phi_2,\theta_1)W(\theta_2,\phi_1)+1}{W(\phi_2,\phi_1)} \right )\nonumber\\
&=-\frac{E^{\Tilde{\Omega}_1}_{\Tilde{\Gamma}_1^{ND}}}{E^{\Tilde{\Omega}_1}_{\Tilde{\Gamma}_1^{DD}}}\Tilde{M}_2+M_1(\Tilde{M}_1+\Tilde{M}_2)-\frac{1}{E^{\Tilde{\Omega}_1}_{\Tilde{\Gamma}_1^{DD}}}\left (\frac{(-\phi_2'(s_1))\theta_2(s_1)+1}{\phi_2(s_1)} \right )\nonumber\\
&\myeq{\eqref{eq:129}}-\frac{E^{\Tilde{\Omega}_1}_{\Tilde{\Gamma}_1^{ND}}}{E^{\Tilde{\Omega}_1}_{\Tilde{\Gamma}_1^{DD}}}\Tilde{M}_2+M_1(\Tilde{M}_1+\Tilde{M}_2)-\frac{1}{E^{\Tilde{\Omega}_1}_{\Tilde{\Gamma}_1^{DD}}}\left (\frac{-\phi_2(s_1)\theta_2'(s_1)}{\phi_2(s_1)} \right )\nonumber\\
&=-\frac{E^{\Tilde{\Omega}_1}_{\Tilde{\Gamma}_1^{ND}}}{E^{\Tilde{\Omega}_1}_{\Tilde{\Gamma}_1^{DD}}}\Tilde{M}_2+M_1(\Tilde{M}_1+\Tilde{M}_2)-\frac{E^{\Tilde{\Omega}_1}_{\Tilde{\Gamma}_1^{NN}}}{E^{\Tilde{\Omega}_1}_{\Tilde{\Gamma}_1^{DD}}}.
\end{align}
On the other hand, we can also simplify the corrective term. By Theorem \ref{thm:map_evans}, we have that $M_2 = -\frac{E^{\Omega_2}_{\Gamma_2'}}{E^{\Omega_2}_{\Gamma_2}}$, which can be further decomposed. In particular, another application of Theorem \ref{thm:determinant_equivalence} allows us to obtain
\begin{equation}\label{eq:130}
E^{\Omega_2}_{\Gamma_2'} = E^{\Tilde{\Omega}_1}_{\Tilde{\Gamma}_1^{ND}}E^{\Tilde{\Omega}_2}_{\Tilde{\Gamma}_2}(\hat{M}_1+\hat{M}_2),
\end{equation}
where $\hat{M}_1+\hat{M}_2$ is the associated two-sided map at $s_2$ (here there is a Neumann condition at $s_1$). Observe that $\hat{M}_2=\Tilde{M}_2$ (the reasoning is clear from Figure \ref{fig:10}). Thus, by equations \eqref{eq:112} and \eqref{eq:130} we have
\begin{equation}\label{eq:131}
M_2 = -\frac{E^{\Omega_2}_{\Gamma_2'}}{E^{\Omega_2}_{\Gamma_2}}= -\frac{E^{\Tilde{\Omega}_1}_{\Tilde{\Gamma}_1^{ND}}E^{\Tilde{\Omega}_2}_{\Tilde{\Gamma}_2}(\hat{M}_1+\hat{M}_2)}{E^{\Tilde{\Omega}_1}_{\Tilde{\Gamma}_1^{DD}}E^{\Tilde{\Omega}_2}_{\Tilde{\Gamma}_2}(\Tilde{M}_1+\Tilde{M}_2)} =-\frac{E^{\Tilde{\Omega}_1}_{\Tilde{\Gamma}_1^{ND}}(\hat{M}_1+\Tilde{M}_2)}{E^{\Tilde{\Omega}_1}_{\Tilde{\Gamma}_1^{DD}}(\Tilde{M}_1+\Tilde{M}_2)} .
\end{equation}
Finally, noting that, by equation \eqref{eq:83} adjusted to a subgraph $\Tilde{\Omega}_1$, $\Hat{M}_1$ can be rewritten as Evans functions as follows:
\begin{equation}\label{eq:128}
\Hat{M}_1 = \frac{E^{\Tilde{\Omega}_1}_{\Tilde{\Gamma}_1^{NN}}}{E^{\Tilde{\Omega}_1}_{\Tilde{\Gamma}_1^{ND}}}= \frac{W(\theta_2,\theta_1)}{W(\phi_2,\theta_1)}.
\end{equation}
We can decompose the corrective term into a workable state. Indeed,
\begin{align}\label{eq:132}
(M_1+M_2)(\Tilde{M}_1+\Tilde{M}_2) &\myeq{\eqref{eq:131}} \left ( M_1-\frac{E^{\Tilde{\Omega}_1}_{\Tilde{\Gamma}_1^{ND}}(\hat{M}_1+\Tilde{M}_2)}{E^{\Tilde{\Omega}_1}_{\Tilde{\Gamma}_1^{DD}}(\Tilde{M}_1+\Tilde{M}_2)}\right )(\Tilde{M}_1+\Tilde{M}_2 )\nonumber\\
&=M_1(\Tilde{M}_1+\Tilde{M}_2)-\frac{E^{\Tilde{\Omega}_1}_{\Tilde{\Gamma}_1^{ND}}(\hat{M}_1+\Tilde{M}_2)}{E^{\Tilde{\Omega}_1}_{\Tilde{\Gamma}_1^{DD}}}\nonumber\\
&=M_1(\Tilde{M}_1+\Tilde{M}_2) -\frac{E^{\Tilde{\Omega}_1}_{\Tilde{\Gamma}_1^{ND}}\Tilde{M}_2}{E^{\Tilde{\Omega}_1}_{\Tilde{\Gamma}_1^{DD}}} -\frac{E^{\Tilde{\Omega}_1}_{\Tilde{\Gamma}_1^{ND}}\hat{M}_1}{E^{\Tilde{\Omega}_1}_{\Tilde{\Gamma}_1^{DD}}}\nonumber\\
&\myeq{\eqref{eq:128}}M_1(\Tilde{M}_1+\Tilde{M}_2) -\frac{E^{\Tilde{\Omega}_1}_{\Tilde{\Gamma}_1^{ND}}\Tilde{M}_2}{E^{\Tilde{\Omega}_1}_{\Tilde{\Gamma}_1^{DD}}} -\frac{E^{\Tilde{\Omega}_1}_{\Tilde{\Gamma}_1^{ND}}\frac{E^{\Tilde{\Omega}_1}_{\Tilde{\Gamma}_1^{NN}}}{E^{\Tilde{\Omega}_1}_{\Tilde{\Gamma}_1^{ND}}}}{E^{\Tilde{\Omega}_1}_{\Tilde{\Gamma}_1^{DD}}}\nonumber\\
&=M_1(\Tilde{M}_1+\Tilde{M}_2) -\frac{E^{\Tilde{\Omega}_1}_{\Tilde{\Gamma}_1^{ND}}\Tilde{M}_2}{E^{\Tilde{\Omega}_1}_{\Tilde{\Gamma}_1^{DD}}} -\frac{E^{\Tilde{\Omega}_1}_{\Tilde{\Gamma}_1^{NN}}}{E^{\Tilde{\Omega}_1}_{\Tilde{\Gamma}_1^{DD}}}.
\end{align}
Comparing equations \eqref{eq:126} and \eqref{eq:132}, we indeed find that $|\M_1+\M_2| = (M_1+M_2)(\Tilde{M}_1+\Tilde{M}_2)$ as claimed. This means we can substitute $|\M_1+\M_2|$ into equation \eqref{eq:250}, completing the proof of this theorem.
\end{proof}

\subsection{Splitting on Two Wires}
\begin{figure}[H]
    \centering
    \includegraphics[width=1.5in]{twosplitdouble.PNG}
    \caption{Splitting $\Omega$ into three subgraphs with two cuts on seperate wires.}
    \label{fig:11}
\end{figure}
Suppose we wish to split $\Omega$ into three subgraphs, where the cuts are made on two different wires. We can extend our Evans function equivalence to this case as well.

Without loss of generality, we assume that these cuts are made on the first and second wires at vertices $s_1$ and $s_2$ satisfying $0< s_1< \ell_1$ and $0< s_2< \ell_2$. The first split is done on the first wire, splitting $\Omega$ into $\Omega_1$ and $\Omega_2$ . We define our usual $\Gamma_1$, $\Gamma_2$, $\Gamma_1'$, and $\Gamma_2'$ conditions as described in Remark \ref{rem:extraops}. The second split is done on the second wire, splitting $\Omega_2$ into $\Tilde{\Omega}_1$ and $\Tilde{\Omega}_2$. Over these subgraphs we define six sets of boundary conditions: $\Tilde{\Gamma}_1$, $\Tilde{\Gamma}_1'$, $\Tilde{\Gamma}_2^{DD}$, $\Tilde{\Gamma}_2^{DN}$, $\Tilde{\Gamma}_2^{ND}$, and $\Tilde{\Gamma}_2^{NN}$. $\Tilde{\Gamma}_1$ and $\Tilde{\Gamma}_1'$ both include the $\Gamma_2$ condition at $\ell_2$, but $\Tilde{\Gamma}_1$ includes the Dirichlet condition $u_2(s_2)=0$ at $s_2$, while $\Tilde{\Gamma}_1'$ includes the Neumann condition $u_2'(s_2)=0$. The various $\Tilde{\Gamma}_2$ sets share $\Gamma_2$ conditions at all common vertices between $\Omega_2$ and $\Tilde{\Omega}_2$ except $s_1$. The conditions at $s_i$ are either Dirichlet $u_i(s_i)=0$ (associated with superscript $D$) or Neumann $u_i'(s_i)$ (associated with superscript $N$), where $i$ is determined by the position of the signifying letter in the superscript. For example, $\Tilde{\Gamma}_2^{ND}$ includes a Neumann condition at $s_1$ and a Dirichlet condition at $s_2$. Note that $\Tilde{\Gamma}_2^{DD}$ is exactly the same as $\Tilde{\Gamma}_2$ as defined in Definition \ref{def:secondsplit}. We define $H$ operators and corresponding Evans functions for all of these boundary conditions according to Remark \ref{rem:opdef}.

We observe that two applications of Theorem \ref{thm:determinant_equivalence} give us
\begin{equation}\label{eq:502}
E^{\Omega}_\Gamma = E^{\Omega_1}_{\Gamma_1}E^{\Omega_2}_{\Gamma_2}(M_1+M_2),
\end{equation}
and
\begin{equation}\label{eq:501}
E^{\Omega_2}_{\Gamma_2} = E^{\Tilde{\Omega}_1}_{\Tilde{\Gamma}_1}E^{\Tilde{\Omega}_2}_{\Tilde{\Gamma}_2^{DD}}(\Tilde{M}_1+\Tilde{M}_2),
\end{equation}
which combine to give us
\[\label{eq:170}
E^{\Omega}_\Gamma= E^{\Omega_1}_{\Gamma_1}E^{\Tilde{\Omega}_1}_{\Tilde{\Gamma}_1}E^{\Tilde{\Omega}_2}_{\Tilde{\Gamma}_2^{DD}}(\Tilde{M}_1+\Tilde{M}_2)(M_1+M_2).
\]

As in the previous section, we can extend our counting equivalence if the following equality holds: $|\M_1+\M_2| = (\Tilde{M}_1+\Tilde{M}_2)(M_1+M_2)$. Of course, for this new case we need to slightly adjust the definitions of $\M_1$ and $\M_2$ from those in Definition \ref{def:onewireMs}.

\begin{definition}\label{def:twowireMs}
Let $\lambda\in \rho(H^{\Omega_1}_{\Gamma_1})\cap\rho(H^{\Tilde{\Omega}_1}_{\Tilde{\Gamma}_1})$ and $\vf\in \CC^2$. Then we can define $\M_1(\lambda):\CC^2\rightarrow \CC^2$ as follows:
\[
\M_1(\lambda)\vf :=
\begin{bmatrix}
-u_1'(s_1,\lambda)\\
-u_2'(s_2,\lambda)
\end{bmatrix},
\]
where $\vu$ solves 
\[
\begin{cases}
(H^{\Omega_1}-\lambda)u_1 = 0,\\
(H^{\Tilde{\Omega}_1}-\lambda)u_2 = 0,\\
u_1(s_1)=f_1,\\
u_2(s_2)=f_2,\\
\text{$\vu$ satisfies all other $\Gamma_1$ and $\Tilde{\Gamma}_1$ conditions}.
\end{cases}
\]
Note that in this case $\vu$ consists of only two components, since its domain is two disconnected wires.

Likewise, for $\lambda\in \rho(H^{\Tilde{\Omega}_2}_{\Tilde{\Gamma}_2^{DD}})$ and $\vf\in \CC^2$, we define $\M_2(\lambda):\CC^2\rightarrow \CC^2$ as follows:
\[
\M_2(\lambda)\vf := \begin{bmatrix}
u_1'(s_1,\lambda)\\
u_2'(s_2,\lambda)
\end{bmatrix},
\]
where $\vu$ is the unique solution to the boundary value problem
\[
\begin{cases}
(H^{\Tilde{\Omega}_2}-\lambda)\vu = \Vec{0},\\
u_1(s_1)=f_1,\\
u_2(s_2)=f_2,\\
\text{$\vu$ satisfies all other $\Gamma_2^{DD}$ conditions}.
\end{cases}
\]
\end{definition}

\begin{remark}\label{rem:twowireMs}
We can find a helpful matrix representation of $\M_2$ by observing its action on the standard $\CC^2$ basis vectors $\Vec{e}_1$ and $\Vec{e}_2$. That is,
to construct $\M_2$, we introduce the functions $\vu$ and $\vw$ over $\Tilde{\Omega}_2$ that represent the unique solutions to the following boundary value problems:
\begin{equation}\label{eq:171}
\begin{cases}
(H^{\Tilde{\Omega}_2}-\lambda)\vu = \Vec{0},\\
u_1(s_1) = 1,\\
u_2(s_2) = 0,\\
\text{$\vu$ satisfies all $\Gamma$ conditions at the origin},
\end{cases}
\end{equation}
and
\begin{equation}\label{eq:172}
\begin{cases}
(H^{\Tilde{\Omega}_2}-\lambda)\vw = \Vec{0},\\
w_1(s_1) = 0,\\
w_2(s_2) = 1,\\
\text{$\vw$ satisfies all $\Gamma$ conditions at the origin}.
\end{cases}
\end{equation}
Then $\M_2$ can be constructed as follows:
\begin{equation}\label{eq:173}
\M_2(\lambda) = \begin{bmatrix}
\frac{u_1'(s_1,\lambda)}{u_1(s_1,\lambda)} & \frac{w_1'(s_1,\lambda)}{w_2(s_2,\lambda)}\\
\frac{u_2'(s_2,\lambda)}{u_1(s_1,\lambda)} & \frac{w_2'(s_2,\lambda)}{w_2(s_2,\lambda)}
\end{bmatrix}.
\end{equation}
Also, we will build $\M_1$ in terms of maps already encountered:
\[\label{eq:504}
\M_1(\lambda) = \begin{bmatrix}
M_1(\lambda) & 0\\
0 & \Tilde{M}_1(\lambda)
\end{bmatrix}.
\]
\end{remark}
We now have all the necessary objects defined, and can prove our extension of Theorem \ref{thm:determinant_equivalence}.

\begin{theorem}\label{thm:twosplittwowire}
Suppose $\Omega$ is split (as described in Definition \ref{def:secondsplit}) into three quantum subgraphs $\Omega_1$, $\Tilde{\Omega}_1$, and $\Tilde{\Omega}_2$, at non-vertex points $s_1$ and $s_2$ on seperate edges of $\Omega$. Additionally, suppose that $\lambda\in\rho(H^{\Omega_1}_{\Gamma_1})\cap \rho(H^{\Tilde{\Omega}_1}_{\Tilde{\Gamma}_1})\cap \rho(H^{\Tilde{\Omega}_2}_{\Tilde{\Gamma}_2})$, and let $\M_1+\M_2$ be the two-sided map associated with $s_1$ and $s_2$. Then
\begin{equation}
\frac{E^{\Omega}_{\Gamma}(\lambda)}{E^{\Omega_1}_{\Gamma_1}(\lambda)E^{\Tilde{\Omega}_1}_{\Tilde{\Gamma}_1}(\lambda)E^{\Tilde{\Omega}_2}_{\Tilde{\Gamma}_2}(\lambda)} = |(\M_1+\M_2)(\lambda)|,
\end{equation}
where $E^{\Omega}_{\Gamma}$, $E^{\Omega_1}_{\Gamma_1}$, $E^{\Tilde{\Omega}_1}_{\Tilde{\Gamma}_1}$, and $E^{\Tilde{\Omega}_2}_{\Tilde{\Gamma}_2}$ are the Evans functions associated with the operators $H^{\Omega}_{\Gamma}$, $H^{\Omega_1}_{\Gamma_1}$, $H^{\Tilde{\Omega}_1}_{\Tilde{\Gamma}_1}$, and $H^{\Tilde{\Omega}_2}_{\Tilde{\Gamma}_2}$, respectively.
\end{theorem}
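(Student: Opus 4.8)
The plan is to follow the pattern of the proof of Theorem~\ref{thm:twosplitonewire}. Two applications of Theorem~\ref{thm:determinant_equivalence}, first at $s_1$ (equation \eqref{eq:502}) and then at $s_2$ (equation \eqref{eq:501}), already give \eqref{eq:170}; since $\Tilde{\Gamma}_2^{DD}=\Tilde{\Gamma}_2$, the theorem is therefore equivalent to the single identity between maps
\[
|(\M_1+\M_2)(\lambda)| = (M_1(\lambda)+M_2(\lambda))(\Tilde{M}_1(\lambda)+\Tilde{M}_2(\lambda)).
\]
Using the block form $\M_1=\diag(M_1,\Tilde{M}_1)$ from \eqref{eq:504} and the matrix representation \eqref{eq:173} of $\M_2$, the left side expands as $|\M_1+\M_2| = (M_1+(\M_2)_{11})(\Tilde{M}_1+(\M_2)_{22}) - (\M_2)_{12}(\M_2)_{21}$, so everything reduces to identifying the four entries of $\M_2$.

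The diagonal entries are one-sided ($M_2$-type) Dirichlet-to-Neumann maps at a single distinguished point of the star $\Tilde{\Omega}_2$, taken with a Dirichlet condition frozen at the other distinguished point, so Theorem~\ref{thm:map_evans} applied to $\Tilde{\Omega}_2$ with the boundary conditions $\Tilde{\Gamma}_2^{DD}$, $\Tilde{\Gamma}_2^{ND}$, $\Tilde{\Gamma}_2^{DN}$ gives
\[
(\M_2)_{11} = -\frac{E^{\Tilde{\Omega}_2}_{\Tilde{\Gamma}_2^{ND}}}{E^{\Tilde{\Omega}_2}_{\Tilde{\Gamma}_2^{DD}}}, \qquad (\M_2)_{22} = -\frac{E^{\Tilde{\Omega}_2}_{\Tilde{\Gamma}_2^{DN}}}{E^{\Tilde{\Omega}_2}_{\Tilde{\Gamma}_2^{DD}}} = \Tilde{M}_2,
\]
the last equality because $\Tilde{\Omega}_2$ inherits the Dirichlet condition at $s_1$ directly from $\Gamma_2$. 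For the off-diagonal entries I would use the solutions $\vu$ and $\vw$ of \eqref{eq:171}--\eqref{eq:172}: the Lagrange bilinear concomitant of any two homogeneous solutions on $\Tilde{\Omega}_2$ is conserved along each edge and vanishes at the origin and at $\ell_i$ ($i\neq 1,2$) because of the self-adjoint $\Gamma$ conditions there; evaluating it for $\vu$ and $\vw$ leaves only the $s_1$ and $s_2$ terms and forces $(\M_2)_{12} = w_1'(s_1) = u_2'(s_2) = (\M_2)_{21}$, so the cross term is a perfect square, say $q(\lambda)^2$.

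The delicate step is to express $q^2$ through the Evans functions of $\Tilde{\Omega}_2$. Consider the two-dimensional space of solutions on $\Tilde{\Omega}_2$ satisfying every $\Gamma$-type condition of $\Tilde{\Omega}_2$ except at $s_1$ and $s_2$, and let $\vp$ be its member normalized on the first wire by $p_1(s_1)=0$, $p_1'(s_1)=1$ (analogously to \eqref{eq:121}); then $\vw=\vp/p_2(s_2)$ and $q = 1/p_2(s_2)$. A Laplace expansion of the $2n\times 2n$ fundamental matrix of $\Tilde{\Omega}_2$ along its two distinguished $Z$-columns — the same bookkeeping used in the proof of Theorem~\ref{thm:map_evans} to split off the minors $B_1,B_2$ — identifies each of $E^{\Tilde{\Omega}_2}_{\Tilde{\Gamma}_2^{DD}}$, $E^{\Tilde{\Omega}_2}_{\Tilde{\Gamma}_2^{DN}}$, $E^{\Tilde{\Omega}_2}_{\Tilde{\Gamma}_2^{ND}}$, $E^{\Tilde{\Omega}_2}_{\Tilde{\Gamma}_2^{NN}}$ with one common nonvanishing meromorphic factor times the corresponding $2\times 2$ boundary-trace minor of this solution space at $s_1,s_2$. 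The Plücker relation among the $2\times 2$ minors of the $4\times 2$ trace matrix, together with the Wronskian normalization on the first wire and the value of the second-wire Wronskian forced by the same concomitant identity (this is the role played by \eqref{eq:129} in the one-wire proof), then gives
\[
q^2 = \frac{E^{\Tilde{\Omega}_2}_{\Tilde{\Gamma}_2^{ND}}\,E^{\Tilde{\Omega}_2}_{\Tilde{\Gamma}_2^{DN}} - E^{\Tilde{\Omega}_2}_{\Tilde{\Gamma}_2^{NN}}\,E^{\Tilde{\Omega}_2}_{\Tilde{\Gamma}_2^{DD}}}{\bigl(E^{\Tilde{\Omega}_2}_{\Tilde{\Gamma}_2^{DD}}\bigr)^2}.
\]

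To finish I would decompose the corrective term exactly as in Theorem~\ref{thm:twosplitonewire}, writing $E^{XY}$ for $E^{\Tilde{\Omega}_2}_{\Tilde{\Gamma}_2^{XY}}$. By Theorem~\ref{thm:map_evans}, $M_2=-E^{\Omega_2}_{\Gamma_2'}/E^{\Omega_2}_{\Gamma_2}$; split the denominator by \eqref{eq:501}, and split the numerator by a further application of Theorem~\ref{thm:determinant_equivalence} at $s_2$. In that split the far piece $\Tilde{\Omega}_1$ on the second wire does not touch $s_1$, so its contribution $\Tilde{M}_1$ is unchanged when the $s_1$ condition is switched from Dirichlet to Neumann, while the new centre-piece map equals $-E^{NN}/E^{ND}$ by Theorem~\ref{thm:map_evans} on $\Tilde{\Omega}_2$. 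Carrying out the cancellations, $(M_1+M_2)(\Tilde{M}_1+\Tilde{M}_2)$ collapses to $M_1\Tilde{M}_1 - M_1\frac{E^{DN}}{E^{DD}} - \Tilde{M}_1\frac{E^{ND}}{E^{DD}} + \frac{E^{NN}}{E^{DD}}$, whereas the expansion of $|\M_1+\M_2|$ above becomes the same expression with the last term replaced by $\frac{E^{ND}E^{DN}}{(E^{DD})^2} - q^2$; the two agree exactly by the identity just displayed. Substituting $|\M_1+\M_2|=(M_1+M_2)(\Tilde{M}_1+\Tilde{M}_2)$ into \eqref{eq:170} completes the proof. I expect the main obstacle to be precisely the middle step: reducing the star-graph Evans functions $E^{\Tilde{\Omega}_2}_{\Tilde{\Gamma}_2^{XY}}$ to $2\times 2$ Wronskians of a two-dimensional solution space with a single common factor, and reading the cross-term identity off the Plücker relation. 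For a single interval (the one-wire case) these Evans functions are literally Wronskians and the step is \eqref{eq:129}; on a star it requires the full Laplace-expansion bookkeeping of the $2n\times 2n$ fundamental matrix and careful sign tracking.
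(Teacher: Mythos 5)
Your proposal follows the same architecture as the paper's proof: two applications of Theorem \ref{thm:determinant_equivalence} reduce everything to $|\M_1+\M_2|=(M_1+M_2)(\Tilde{M}_1+\Tilde{M}_2)$, the diagonal entries of $\M_2$ are identified via Theorem \ref{thm:map_evans} exactly as in \eqref{eq:516}--\eqref{eq:517}, the decomposition of $M_2$ through \eqref{eq:501}, \eqref{eq:515} and \eqref{eq:hM} reproduces the paper's \eqref{eq:183} verbatim, and the whole theorem comes down to the same quadratic identity $E^{\Tilde{\Omega}_2}_{\Tilde{\Gamma}_2^{DD}}E^{\Tilde{\Omega}_2}_{\Tilde{\Gamma}_2^{NN}}-E^{\Tilde{\Omega}_2}_{\Tilde{\Gamma}_2^{ND}}E^{\Tilde{\Omega}_2}_{\Tilde{\Gamma}_2^{DN}}=-(\M_2)_{12}(\M_2)_{21}\,(E^{\Tilde{\Omega}_2}_{\Tilde{\Gamma}_2^{DD}})^2$. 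You differ in two sub-steps. First, for the off-diagonal entries you invoke symmetry $(\M_2)_{12}=(\M_2)_{21}$ via the non-conjugated Lagrange concomitant, whereas the paper computes both entries separately from Theorem \ref{thm:general_ugamma} and Lemma \ref{thm:C_and_fundamental} (equations \eqref{eq:510}--\eqref{eq:181}) and never needs symmetry. Your shortcut is cleaner when it applies, but note the caveat: the vanishing of the \emph{bilinear} (unconjugated) concomitant at a vertex requires $\alpha_1\alpha_2^{t}=\alpha_2\alpha_1^{t}$, while the standing hypothesis \eqref{eq:3} is the sesquilinear condition $\alpha_1\alpha_2^{*}=\alpha_2\alpha_1^{*}$; these coincide for real matrices but not for general complex ones, and the two solutions you compare sit at the same (possibly complex) $\lambda$, so the conjugate-linear Green identity does not directly apply. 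Since the determinant only needs the product of the off-diagonal entries, the paper's explicit computation sidesteps this entirely, and you should either restrict to real vertex matrices or compute the product as the paper does. Second, your Pl\"ucker-relation framing of the key identity is a legitimate repackaging of the paper's argument: evaluating the Evans functions at $\vx=(s_1,s_2,\dots)$ collapses the four-term Laplace expansion \eqref{eq:195} to single bordered minors $M_{j,k}$, and the identity $M_{1,2}M_{3,4}=M_{1,3}M_{2,4}-M_{1,4}M_{2,3}$ that you read off the Pl\"ucker quadric is exactly what the paper proves by computing the determinant of the auxiliary matrix $A$ in \eqref{eq:196} two ways. Your framing is conceptually tidier; the paper's is self-contained and handles the sign bookkeeping for general $n$ explicitly, which is the part you correctly flag as the remaining labor.
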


\begin{proof}
Consider the $\vw$ and $\vu$ functions as discussed in Remark \ref{rem:twowireMs}. We note that, by Theorem \ref{thm:map_evans} adjusted to a subgraph $\Tilde{\Omega}_2$, 
\begin{equation}\label{eq:516}
\frac{w_2'(s_2,\lambda)}{w_2(s_2,\lambda)}=\Tilde{M}_2(\lambda)=-\frac{E^{\Tilde{\Omega}_2}_{\Tilde{\Gamma}_2^{DN}}(\lambda)}{E^{\Tilde{\Omega}_2}_{\Tilde{\Gamma}_2^{DD}}(\lambda)}.
\end{equation}
Additionally, walking through the argument leading to equation \eqref{eq:79}, we find that 
\begin{equation}\label{eq:517}
\frac{u_1'(s_1,\lambda)}{u_1(s_1,\lambda)}=-\frac{E^{\Tilde{\Omega}_2}_{\Tilde{\Gamma}_2^{ND}}(\lambda)}{E^{\Tilde{\Omega}_2}_{\Tilde{\Gamma}_2^{DD}}(\lambda)}.
\end{equation}
We still need to classify the off diagonal entries of $\M_2$. By applying Theorem \ref{thm:general_ugamma} on $\Tilde{\Omega}_2$ with $i=1$, $j=2$, and $\Tilde{\Gamma}_2$ boundary conditions, we can find $u_2$ to be
\[
u_2(x_2,\lambda) = -\frac{z_{2,2}(x_2,\lambda)}{D_2(\lambda)}\frac{|\mathcal{C}_1(\lambda,\Vec{e}_2)|}{|C(\lambda)|}z_{1,1}'(s_1,\lambda),
\]
and the same theorem with $i=2$, $j=1$ tells us that $w_1$ is
\[
w_1(x_1,\lambda) = -\frac{z_{1,1}(x_1,\lambda)}{D_1(\lambda)}\frac{|\mathcal{C}_2(\lambda,\Vec{e}_1)|}{|C(\lambda)|}z_{2,2}'(s_2,\lambda).
\]
There is only one component of the trace vectors taken in Theorem \ref{thm:general_ugamma} due to similar reasoning as that in the start of the proof of Theorem \ref{thm:map_evans}. That is, because of the Dirichlet conditions of $\Tilde{\Gamma}_2$ at $s_1$ and $s_2$, we find that $\Vec{N}_1\Vec{e}_2 = -\Vec{e}_2$ and $\Vec{N}_2\Vec{e}_1 = -\Vec{e}_1$, and therefore $(\Vec{L}_1+\Vec{M}_1)\Vec{e}_2=(\Vec{L}_2+\Vec{M}_2)\Vec{e}_1=\Vec{0}$ (the reason why is nearly identical to that at the start of the proof of Theorem \ref{thm:map_evans}).

As in previous cases, we specify some well-behaved archetypal functions. Let $\phi_1,\theta_1$ on edge $\epsilon_1$ and $\phi_2,\theta_2$ on edge $\epsilon_2$ be components of solutions to $(H^\Omega-\lambda)\vu=\vec{0}$ which satisfy the following conditions:
\begin{align}\label{eq:180}
& \phi_1(s_1) = 0 & \theta_1(s_1) = 1 & &\phi_2(s_2) = 0 && \theta_2(s_2) = 1,\nonumber\\
& \phi_1'(s_1) = 1 & \theta_1'(s_1) = 0 && \phi_2'(s_2) = 1 && \theta_2'(s_2) = 0.
\end{align}
We observe that $z_{1,1}$ and $z_{2,2}$ equal $\phi_1$ and $\phi_2$, respectively. Thus, $D_1 = W(y_{\tau_1,1},\phi_1)$ and $D_2=W(y_{\tau_2,2},\phi_2)$. Recognizing that $u_1(s_1,\lambda)$ and $w_2(s_2,\lambda)$ are both specified to be $1$, the off diagonal terms in $\M_2(\lambda)$ are just $w_1'(s_1,\lambda)$ and $u_2'(s_2,\lambda)$. Taking the derivative of our current expressions, we obtain
\begin{equation}\label{eq:510}
u_2'(x_2,\lambda) = -\frac{\phi_2'(x_2,\lambda)}{D_2(\lambda)}\frac{|\mathcal{C}_1(\lambda,\Vec{e}_2)|}{|C(\lambda)|},
\end{equation}
and
\begin{equation}\label{eq:511}
w_1'(x_1,\lambda) = -\frac{\phi_1'(x_1,\lambda)}{D_1(\lambda)}\frac{|\mathcal{C}_2(\lambda,\Vec{e}_1)|}{|C(\lambda)|}.
\end{equation}
We know that by using Lemma \ref{thm:C_and_fundamental}, $|C(\lambda)|$ in both of these equations can be transformed into $(-1)^n|F(\vx,\lambda)|$ for a corresponding fundamental matrix $F$. By definition, $|F| = E^{\Tilde{\Omega}_2}_{\Tilde{\Gamma}_2^{DD}}$. One can use Lemma \ref{thm:C_and_fundamental} to see that $|\mathcal{C}_1(\lambda,\Vec{e}_2)|=(-1)^n|\mathcal{F}_1(\vx,\lambda,\Vec{e}_2)|$ where $\mathcal{F}_1(\vx,\lambda,\Vec{e}_2)$ is simply the fundamental matrix $F$ with its $\vz_1$ column replaced by a vector which is $y_{\tau_2,2}$ in the second wire row, $y_{\tau_2,2}'$ in the second wire derivative row, and zeros in all others. Similarly, $|\mathcal{C}_2(\lambda,\Vec{e}_1)|=(-1)^n|\mathcal{F}_2(\vx,\lambda,\Vec{e}_1)|$ where $\mathcal{F}_2(\vx,\lambda,\Vec{e}_1)$ is simply the fundamental matrix $F$ with its $\vz_2$ column replaced by a vector which is $y_{\tau_1,1}$ in the first wire row, $y_{\tau_1,1}'$ in the first wire derivative row, and zeros in all others.

Notice that by expanding down the $\vz_1$ and $\vz_2$ columns in $|\mathcal{F}_1(\vx,\lambda,\Vec{e}_2)|$ with $2\times 2$ blocks, we find that
\begin{equation}\label{eq:512}
|\mathcal{F}_1(\vx,\lambda,\Vec{e}_2)| = (-1)^q B_1 (\lambda)D_2(\lambda),
\end{equation}
where $B_1$ is the complementary minor and $(-1)^q$ is the sign of the defining permutation of this expansion term. Similarly, we can expand down the $\vz_1$ and $\vz_2$ columns in $|\mathcal{F}_2(\vx,\lambda,\Vec{e}_1)|$ with $2\times 2$ blocks to obtain
\begin{equation}\label{eq:513}
|\mathcal{F}_2(\vx,\lambda,\Vec{e}_1)| = (-1)^{q+2}B_2(\lambda)(-D_1(\lambda)),
\end{equation}
where $B_2$ is the complementary minor, and $(-1)^{q+2}$ again serves to denote the permutation sign. We can now rewrite our off-diagonal $u_2$ and $w_1$ entries as follows:
\begin{align}\label{eq:181}
u_2'(s_2,\lambda) &\myeq{\eqref{eq:510}} -\frac{\phi_2'(s_2,\lambda)}{D_2(\lambda)}\frac{|\mathcal{C}_1(\lambda,\Vec{e}_2)|}{|C(\lambda)|}\,\,\,\,\,\,\, \myeq{\text{Lemma }\ref{thm:C_and_fundamental}}\,\,\,\,\,\,\, -\frac{(-1)^n|\mathcal{F}_1(\vx,\lambda,\Vec{e}_2)|}{D_2(\lambda)(-1)^n|F(\vx,\lambda)|}\nonumber\\
&\myeq{\eqref{eq:512}} -\frac{(-1)^qB_1(\lambda)D_2(\lambda)}{D_2(\lambda)|F(\vx,\lambda)|} =\frac{(-1)^{q+1}B_1(\lambda)}{E^{\Tilde{\Omega}_2}_{\Tilde{\Gamma}_2^{DD}}(\lambda)}\nonumber\\
w_1'(s_1,\lambda) &\myeq{\eqref{eq:511}} -\frac{\phi_1'(s_1,\lambda)}{D_1(\lambda)}\frac{|\mathcal{C}_2(\lambda,\Vec{e}_1)|}{|C(\lambda)|} \,\,\,\,\,\,\, \myeq{\text{Lemma }\ref{thm:C_and_fundamental}}\,\,\,\,\,\,\, -\frac{(-1)^n|\mathcal{F}_2(\vx,\lambda,\Vec{e}_1)|}{D_1(\lambda)(-1)^n|F(\vx,\lambda)|} \nonumber\\
&\myeq{\eqref{eq:513}} -\frac{(-1)^{q+2}B_2(\lambda)(-D_1(\lambda))}{D_1(\lambda)|F(\vx,\lambda)|} =\frac{(-1)^{q+2}B_2(\lambda)}{E^{\Tilde{\Omega}_2}_{\Tilde{\Gamma}_2^{DD}}(\lambda)}.
\end{align}
Then the determinant of the two-sided map $\mathcal{M}_1+\mathcal{M}_2$ can be expanded out using our new expressions of the off-diagonal terms. Note that the objects we work with for the rest of this proof depend on $\lambda$ only, so we suppress this dependence to make formulas more readable. With this in mind, using Remark \ref{rem:twowireMs} and equations \eqref{eq:516}, \eqref{eq:517} and \eqref{eq:181}, the determinant expansion is
\begin{align}\label{eq:182}
&|\mathcal{M}_1+\mathcal{M}_2| =\begin{vmatrix}
M_1-\frac{E^{\Tilde{\Omega}_2}_{\Tilde{\Gamma}_2^{ND}}}{E^{\Tilde{\Omega}_2}_{\Tilde{\Gamma}_2^{DD}}} & \frac{(-1)^{q+2}B_2}{E^{\Tilde{\Omega}_2}_{\Tilde{\Gamma}_2^{DD}}}\\
\frac{(-1)^{q+1}B_1}{E^{\Tilde{\Omega}_2}_{\Tilde{\Gamma}_2^{DD}}} & \Tilde{M}_1 + \Tilde{M}_2 
\end{vmatrix}\nonumber\\
&= -\frac{E^{\Tilde{\Omega}_2}_{\Tilde{\Gamma}_2^{ND}}}{E^{\Tilde{\Omega}_2}_{\Tilde{\Gamma}_2^{DD}}}(\Tilde{M}_1 + \Tilde{M}_2)\nonumber+M_1(\Tilde{M}_1 + \Tilde{M}_2)+ \frac{B_1B_2}{(E^{\Tilde{\Omega}_2}_{\Tilde{\Gamma}_2^{DD}})^2}\nonumber\\
&\myeq{\eqref{eq:516}} -\frac{E^{\Tilde{\Omega}_2}_{\Tilde{\Gamma}_2^{ND}}}{E^{\Tilde{\Omega}_2}_{\Tilde{\Gamma}_2^{DD}}}\Tilde{M}_1 +\frac{E^{\Tilde{\Omega}_2}_{\Tilde{\Gamma}_2^{ND}}E^{\Tilde{\Omega}_2}_{\Tilde{\Gamma}_2^{DN}}}{(E^{\Tilde{\Omega}_2}_{\Tilde{\Gamma}_2^{DD}})^2} 
+M_1(\Tilde{M}_1 + \Tilde{M}_2)+ \frac{B_1B_2}{(E^{\Tilde{\Omega}_2}_{\Tilde{\Gamma}_2^{DD}})^2}.
\end{align}
Using Theorem \ref{thm:determinant_equivalence} adjusted to a subgraph $\Omega_2$, we can decompose $E^{\Omega_2}_{\Gamma_2'}$ as follows:
\begin{equation}\label{eq:515}
E^{\Omega_2}_{\Gamma_2'} = E^{\Tilde{\Omega}_1}_{\Tilde{\Gamma}_1} E^{\Tilde{\Omega}_2}_{\Tilde{\Gamma}_2^{ND}}(\hat{M}_1+\hat{M}_2),
\end{equation}
where $\hat{M}_1+\hat{M}_2$ is the relevant two-sided map associated with $s_2$. Note that $\Hat{M}_1=\Tilde{M}_1$. Additionally, using Theorem \ref{thm:map_evans} adjusted to a subgraph $\tilde\Omega_2$, we can rewrite $\Hat{M}_2$ as

\begin{equation}\label{eq:hM}
\Hat{M}_2 = -\frac{E^{\Tilde{\Omega}_2}_{\Tilde{\Gamma}_2^{NN}}}{E^{\Tilde{\Omega}_2}_{\Tilde{\Gamma}_2^{ND}}}.
\end{equation}

Therefore, we can expand out $(M_1+M_2)(\Tilde{M}_1 + \Tilde{M}_2)$ as
\begin{align}\label{eq:183}
&(M_1+M_2)(\Tilde{M}_1 + \Tilde{M}_2) = M_1(\Tilde{M}_1 + \Tilde{M}_2) - \frac{E^{\Omega_2}_{\Gamma_2'}}{E^{\Omega_2}_{\Gamma_2}}(\Tilde{M}_1 + \Tilde{M}_2) \nonumber\\
& \myeq{\eqref{eq:501},\eqref{eq:515}}\,\,\,\, M_1(\Tilde{M}_1 + \Tilde{M}_2) - \frac{E^{\Tilde{\Omega}_1}_{\Tilde{\Gamma}_1}E^{\Tilde{\Omega}_2}_{\Tilde{\Gamma}_2^{ND}}(\Hat{M}_1+\Hat{M}_2)}{E^{\Tilde{\Omega}_1}_{\Tilde{\Gamma}_1}E^{\Tilde{\Omega}_2}_{\Tilde{\Gamma}_2^{DD}}(\Tilde{M}_1+\Tilde{M}_2)}(\Tilde{M}_1 + \Tilde{M}_2)\nonumber\\
& \myeq{\eqref{eq:hM}}\,\,\,\, M_1(\Tilde{M}_1 + \Tilde{M}_2) - \frac{E^{\Tilde{\Omega}_2}_{\Tilde{\Gamma}_2^{ND}}}{E^{\Tilde{\Omega}_2}_{\Tilde{\Gamma}_2^{DD}}}\Tilde{M}_1 + \frac{E^{\Tilde{\Omega}_2}_{\Tilde{\Gamma}_2^{ND}}}{E^{\Tilde{\Omega}_2}_{\Tilde{\Gamma}_2^{DD}}}\frac{E^{\Tilde{\Omega}_2}_{\Tilde{\Gamma}_2^{NN}}}{E^{\Tilde{\Omega}_2}_{\Tilde{\Gamma}_2^{ND}}}\nonumber\\
&=M_1(\Tilde{M}_1 + \Tilde{M}_2) - \frac{E^{\Tilde{\Omega}_2}_{\Tilde{\Gamma}_2^{ND}}}{E^{\Tilde{\Omega}_2}_{\Tilde{\Gamma}_2^{DD}}}\Tilde{M}_1 + \frac{E^{\Tilde{\Omega}_2}_{\Tilde{\Gamma}_2^{NN}}}{E^{\Tilde{\Omega}_2}_{\Tilde{\Gamma}_2^{DD}}}.
\end{align}
Comparing the terms in equations \eqref{eq:182} and \eqref{eq:183}, we see that we have the desired equality if
\begin{equation}\label{eq:184}
\frac{E^{\Tilde{\Omega}_2}_{\Tilde{\Gamma}_2^{NN}}}{E^{\Tilde{\Omega}_2}_{\Tilde{\Gamma}_2^{DD}}} = \frac{B_1B_2}{(E^{\Tilde{\Omega}_2}_{\Tilde{\Gamma}_2^{DD}})^2}+\frac{E^{\Tilde{\Omega}_2}_{\Tilde{\Gamma}_2^{ND}}E^{\Tilde{\Omega}_2}_{\Tilde{\Gamma}_2^{DN}}}{(E^{\Tilde{\Omega}_2}_{\Tilde{\Gamma}_2^{DD}})^2}.
\end{equation}
An equivalent condition can be obtained by rewriting the above equation as
\begin{equation}\label{eq:185}
E^{\Tilde{\Omega}_2}_{\Tilde{\Gamma}_2^{DD}}E^{\Tilde{\Omega}_2}_{\Tilde{\Gamma}_2^{NN}}-E^{\Tilde{\Omega}_2}_{\Tilde{\Gamma}_2^{ND}} E^{\Tilde{\Omega}_2}_{\Tilde{\Gamma}_2^{DN}} = B_1B_2.
\end{equation}
Note that, since every determinant in this proposed equality is being multiplied by another determinant with an underlying matrix of the same shape, we can prove this statement for slightly modified matrices. That is, we can rearrange rows and columns as long as the same swaps are performed for every determinant involved. For the remainder of this proof, we let all matrices involved have row ordering of the form $y_1,y_1',y_2,y_2',...,y_n,y_n'$ instead of the standard $y_1,y_2,...,y_n,y_1',y_2',...,y_n'$. These row swapped objects will be denoted by calligraphic letters. For example, $E^{\Tilde{\Omega}_2}_{\Tilde{\Gamma}_2^{DD}}$ and $B_1$ would be rewritten as $\E^{\Tilde{\Omega}_2}_{\Tilde{\Gamma}_2^{DD}}$ and $\B_1$, respectively. Therefore, equation \eqref{eq:185} is equivalent to
\begin{equation}\label{eq:cBB}
\E^{\Tilde{\Omega}_2}_{\Tilde{\Gamma}_2^{DD}}\E^{\Tilde{\Omega}_2}_{\Tilde{\Gamma}_2^{NN}}-\E^{\Tilde{\Omega}_2}_{\Tilde{\Gamma}_2^{ND}} \E^{\Tilde{\Omega}_2}_{\Tilde{\Gamma}_2^{DN}}=\B_1\B_2.
\end{equation}

We now begin a brute expansion of the various Evans functions on the left hand side of equation \eqref{eq:cBB}. Note that they are entirely identical except in the $\vz_1$ and $\vz_2$ columns. In these columns, playing the role of $z_1$ and $z_2$ (and thus their derivatives), are: $\phi_1$ and $\phi_2$ for $\E^{\Tilde{\Omega}_2}_{\Tilde{\Gamma}_2^{DD}}$, $\phi_1$ and $\theta_2$ for $\E^{\Tilde{\Omega}_2}_{\Tilde{\Gamma}_2^{DN}}$, $\theta_1$ and $\phi_2$ for $\E^{\Tilde{\Omega}_2}_{\Tilde{\Gamma}_2^{ND}}$, and $\theta_1$ and $\theta_2$ for $\E^{\Tilde{\Omega}_2}_{\Tilde{\Gamma}_2^{NN}}$. 

We choose to expand each Evans function along these two columns in $2\times 2$ blocks. This means that, letting $f_i$ stand in for the $\phi_i$ or $\theta_i$ in the $\vz_i$ column, these determinants expand in the form
\begin{align}\label{eq:195}
&-\begin{vmatrix}
f_1 & 0\\
0 &f_2
\end{vmatrix}M_{2,4} + \begin{vmatrix}
f_1 & 0\\
0 &f_2'
\end{vmatrix}M_{2,3} + \begin{vmatrix}
f_1' & 0\\
0 & f_2
\end{vmatrix}M_{1,4} - \begin{vmatrix}
f_1' & 0\\
0 & f_2'
\end{vmatrix}M_{1,3} \nonumber\\
&=-f_1f_2M_{2,4} + f_1f_2'M_{2,3} + f_1'f_2M_{1,4} - f_1'f_2'M_{1,3},
\end{align}
where each $M_{j,k}$ is some complementary minor. The $j$ and $k$ indicate which two of the first four rows are inherited as the first two rows of the undelying matrix of minor. We note that, using such notation, $\B_1\B_2=M_{1,2}M_{3,4}$ (defined the same way as the other $M_{j,k}$). 

Now, using the expansion from equation \eqref{eq:195}, we can rewrite equation \eqref{eq:cBB} as
\begin{align}\label{eq:200}
&\E^{\Tilde{\Omega}_2}_{\Tilde{\Gamma}_2^{DD}}\E^{\Tilde{\Omega}_2}_{\Tilde{\Gamma}_2^{NN}}-\E^{\Tilde{\Omega}_2}_{\Tilde{\Gamma}_2^{ND}} \E^{\Tilde{\Omega}_2}_{\Tilde{\Gamma}_2^{DN}}\nonumber
\\&= (-\phi_1\phi_2M_{2,4} + \phi_1\phi_2'M_{2,3} + \phi_1'\phi_2M_{1,4} - \phi_1'\phi_2'M_{1,3})\nonumber
\\&\times (-\theta_1\theta_2M_{2,4} + \theta_1\theta_2'M_{2,3} + \theta_1'\theta_2M_{1,4} - \theta_1'\theta_2'M_{1,3})\nonumber
\\&- (-\theta_1\phi_2M_{2,4} + \theta_1\phi_2'M_{2,3} + \theta_1'\phi_2M_{1,4} - \theta_1'\phi_2'M_{1,3})\nonumber
\\&\times 
(-\phi_1\theta_2M_{2,4} + \phi_1\theta_2'M_{2,3} + \phi_1'\theta_2M_{1,4} - \phi_1'\theta_2'M_{1,3})\nonumber
\\&=(-\theta_1\phi_2\phi_1'\theta_2' + \phi_1\phi_2\theta_1'\theta_2' - \theta_1'\phi_2'\phi_1\theta_2 + \phi_1'\phi_2'\theta_1\theta_2)M_{1,3}M_{2,4}\nonumber
\\&+(-\theta_1\phi_2'\phi_1'\theta_2 + \phi_1\phi_2'\theta_1'\theta_2 - \theta_1'\phi_2\phi_1\theta_2' + \phi_1'\phi_2\theta_1\theta_2')M_{1,4}M_{2,3} \nonumber
\\&= \begin{vmatrix}
\theta_1 & \phi_1\\
\theta_1' & \phi_1'
\end{vmatrix}\begin{vmatrix}
\theta_2 & \phi_2\\
\theta_2' & \phi_2'
\end{vmatrix}(M_{1,3}M_{2,4}-M_{1,4}M_{2,3})\nonumber
\\&= M_{1,3}M_{2,4}-M_{1,4}M_{2,3}.
\end{align}
We can now prove equation \eqref{eq:cBB} by showing that $M_{1,3}M_{2,4}-M_{1,4}M_{2,3}= M_{1,2}M_{3,4}$. This can be done via the construction of a special matrix $A$. $A$ is a $(4n-4)\times (4n-4)$ matrix, in which each row is split in two halves, each half being a $2n-2$ zero row or a $2n-2$ length row from fundamental matrix $F$ with its $\vz_1$ and $\vz_2$ columns removed. Let $R_1$ represent the first row of this modified $F$, $R_2$ the second, and so on down to $R_{2n}$. Then, listing zero rows as $0$, we construct $A$ like so:
\begin{equation}\label{eq:196}
A = \begin{bmatrix}
R_1 & 0\\
R_2 & R_2\\
R_3 & R_3\\
R_4 & R_4\\
R_5 & 0\\
\vdots & \vdots\\
R_{2n} & 0\\
0 & R_5\\
\vdots & \vdots\\
0 & R_{2n}
\end{bmatrix}.
\end{equation}
The rows from $R_5$ to $R_{2n}$ are those which are common in all the various $M_{i,j}$. Thus, the relevant $M_{i,j}$ minors can be written as
\begin{equation}\label{eq:205}
M_{i,j} = \begin{vmatrix}
R_i\\R_j\\R_5\\\vdots\\R_{2n}
\end{vmatrix}.
\end{equation}
By expanding in $(2n-2)\times (2n-2)$ blocks down the first $2n-2$ columns, we arrive at the following expression:
\begin{equation}\label{eq:198}
|A| = M_{1,2}M_{3,4}-M_{1,3}M_{2,4}+M_{1,4}M_{2,3}.
\end{equation}
Alternatively, we can do some row and column operations before performing this expansion to alter $A$ without changing its determinant. First, for the $j$th column of $A$ (where $1\leq j\leq 2n-2$), subtract the $(2n-2+j)$th column. Next, for each $k$ such that $2n-4\leq k\leq 4n-4$, add the $(k-2n+4)$th row to the $k$th row. So, the determinant $|A|$ can now be written as
\begin{equation}\label{eq:197}
|A| = \begin{vmatrix}
R_1 & 0\\
0 & R_2\\
0 & R_3\\
0 & R_4\\
R_5 & 0\\
\vdots & \vdots\\
R_{2n} & 0\\
0 & R_5\\
\vdots & \vdots\\
0 & R_{2n}
\end{vmatrix}.
\end{equation}
This determinant is zero, since we choose to expand in $(2n-2)\times (2n-2)$ blocks, but there are only $2n-3$ nonzero rows to be selected for minors in the left column collection: $R_1,R_5,...,R_{2n}$. Thus, every term in the expansion will have a zero row, and will be $0$. So, by our two calculations of $|A|$, we have
\[
M_{1,2}M_{3,4}-M_{1,3}M_{2,4}+M_{1,4}M_{2,3} =0,
\]
which implies
\[
M_{1,2}M_{3,4}=M_{1,3}M_{2,4}-M_{1,4}M_{2,3} \myeq{\eqref{eq:200}} \E^{\Tilde{\Omega}_2}_{\Tilde{\Gamma}_2^{DD}}\E^{\Tilde{\Omega}_2}_{\Tilde{\Gamma}_2^{NN}}-\E^{\Tilde{\Omega}_2}_{\Tilde{\Gamma}_2^{ND}} \E^{\Tilde{\Omega}_2}_{\Tilde{\Gamma}_2^{DN}}.
\]
Since $M_{1,2}M_{2,4} = \B_1\B_2$, the above equation gives us the following
\[
\B_1\B_2 = \E^{\Tilde{\Omega}_2}_{\Tilde{\Gamma}_2^{DD}}\E^{\Tilde{\Omega}_2}_{\Tilde{\Gamma}_2^{NN}}-\E^{\Tilde{\Omega}_2}_{\Tilde{\Gamma}_2^{ND}} \E^{\Tilde{\Omega}_2}_{\Tilde{\Gamma}_2^{DN}},
\]
which proves equation \eqref{eq:185}, thus completing the proof.
\end{proof}

Theorems \ref{thm:twosplitonewire} and \ref{thm:twosplittwowire} are the two cases making up the proof of Theorem \ref{thm:twoCutEvans}. The straightforward relationship between the counting functions in Theorem \ref{thm:counting2d} and the Evans functions and maps in Theorem \ref{thm:twoCutEvans} means that a proof for the counting relationship immediately follows.

\section{Applications}
Here we will consider a quantum star graph with two edges of length $1$. We will impose the Kirchhoff conditions $u_1(0)=u_2(0)$ and $u_1'(0)+u_2'(0)=0$ at the origin. We wish to see how the eigenvalues change as various potentials and endpoint conditions are introduced. In particular, we will graph Evans functions and map curves as $\lambda$ varies over a certain interval. Since our eigenvalue counting formulas follow directly from our Evans function equalities, they are true over any interval $[\lambda_1,\lambda_2]$. Thus, we represent the count of eigenvalues of an operator or zeros minus poles of a two sided map for $\lambda\in [\lambda_1,\lambda_2]$ by passing the interval as an argument in the relevant counting function from Theorem \ref{thm:counting} or \ref{thm:counting2d}. For example, $\mathcal{N}_X([\lambda_1,\lambda_2])$ equals the number of eigenvalues of $X$ for $\lambda\in[\lambda_1,\lambda_2]$.

\subsection{Potential Barrier/Well at Wire's End}
A potential barrier or well is a localized constant potential at the end of one wire. We can model these by setting $V_i(x_i)=0$ for all potentials in $H^\Omega$ except when $i=1$, and letting $V_1(x_1)$ be defined piecewise as:
\begin{equation}\label{eq:100}
V_1(x_1) = \begin{cases}
\nu, & s_1\leq x_1\leq 1\\
0, & 0\leq x_1 < s_1
\end{cases},
\end{equation}
where $\nu\in \RR$. In this example, the boundary conditions $\Gamma$ will be represented by the following $\alpha$ and $\beta$ matrices:
\begin{align}\label{eq:101}
\beta_1 = I_2,\,\, \beta_2 = 0_2,\,\,
\alpha_1 = \begin{bmatrix}
1 & -1\\
0 & 0
\end{bmatrix},\,\, \alpha_2 =\begin{bmatrix}
0 & 0 \\
1 & 1
\end{bmatrix}.
\end{align}
These are Kirchhoff at the origin, and Dirichlet at the outer vertices. Using Theorem \ref{thm:determinant_equivalence}, we can count the eigenvalues of $H^{\Omega}_{\Gamma}$ via Evans functions of its subproblems.

First, we split the problem in two by placing a new vertex at $x_1=s_1$ and imposing a Dirichlet condition there. The operator $H^{\Omega_1}_{\Gamma_1}$ defined over $\Omega_1$ is a Dirichlet Laplacian (with nonzero potential $\nu$), while the operator $H^{\Omega_2}_{\Gamma_2}$ over $\Omega_2$ is still a Kirchhoff Laplacian, but with a smaller edge $\epsilon_1$.

To find the eigenvalues of $H^{\Omega_1}_{\Gamma_1}$, we construct the corresponding Evans function. For $\lambda-\nu>0$, it is equal to
\begin{equation}\label{eq:103}
E^{\Omega_1}_{\Gamma_1}(\lambda) = \frac{\sin((1-s_1)\sqrten)}{\sqrten}.
\end{equation}

To find the eigenvalues of $H^{\Omega_2}_{\Gamma_2}$, we will construct the Evans function $E^{\Omega_2}_{\Gamma_2}$. For $\lambda>0$, we have
\begin{align}\label{eq:104}
E^{\Omega_2}_{\Gamma_2}(\lambda) =-\frac{\sin(\rlam(1+s_1))}{\rlam}\,.
\end{align}

Next we solve for the one-sided maps $M_1$ and $M_2$. For $\lambda>0$, we have
\begin{equation}\label{eq:106}
M_1(\lambda) = \frac{E^{\Omega_1}_{\Gamma_1'}(\lambda)}{E^{\Omega_1}_{\Gamma_1}(\lambda)}= \frac{\sqrt{\lambda-\nu}\cos(\sqrt{\lambda-\nu}(s_1-1))}{\sin(\sqrt{\lambda-\nu}(s_1-1))}\,,
\end{equation}

Finally, for $\lambda>0$ we have
\begin{equation}\label{eq:108}
M_2(\lambda) = -\frac{E^{\Omega_2}_{\Gamma_2'}(\lambda)}{E^{\Omega_2}_{\Gamma_2}(\lambda)}= -\frac{\rlam\cos(\rlam(1+s_1))}{\sin(\rlam (1+s_1))}.
\end{equation}

Our method so far has been quite simple, consisting of a few determinants and some easy initial value problems defined by the various boundary condition matrices. We would like to check our results against the actual eigenvalues of $H^\Omega_\Gamma$. It can be shown that the positive eigenvalues of $H^\Omega_\Gamma$ are precisely the zeros of the function
\begin{align}\label{eq:110}
\begin{split}
E(\lambda)=&\rlam \cos(\rlam (1+s_1))\sin(\sqrten(s_1-1))
\\&-\sqrten\cos(\sqrten(s_1-1))\sin(\rlam(1+s_1)),
\end{split}
\end{align}
Note that $E(\lambda)$ is an Evans function for $H^{\Omega}_{\Gamma}$, although not necessarily the one specified by our usual construction.

One can graphically see (for example in Figure \ref{fig:exampleSingle}) that the eigenvalues of $H^{\Omega}_\Gamma$ are counted accurately by plotting $E$, $M_1+M_2$, $E^{\Omega_1}_{\Gamma_1}$, and $E^{\Omega_2}_{\Gamma_2}$.

\begin{figure}[H]
    \centering
    \includegraphics[width=4in]{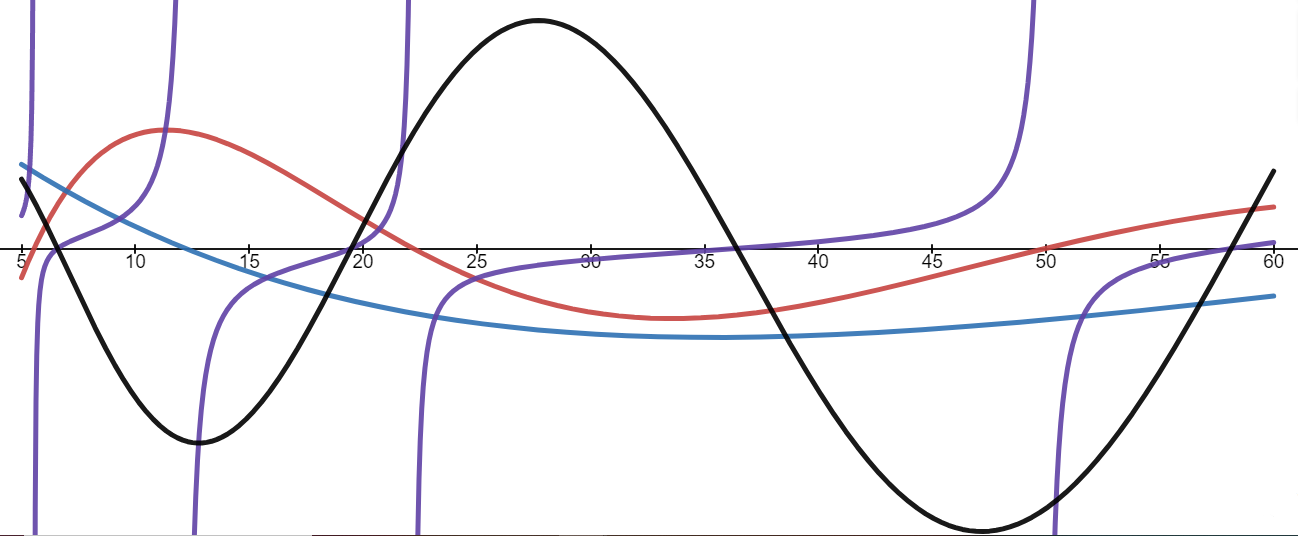}
    \caption{A single split with $s_1=\frac{1}{3}$ and $\nu = -10$ for $\lambda\in [5,60]$. The red curve is $10E^{\Omega_2}_{\Gamma_2}$, the blue is $15E^{\Omega_1}_{\Gamma_1}$, the purple curve is $\frac{1}{25}(M_1 +M_2)$, and the black curve is $E$. These objects are vertically rescaled to accentuate the zeros and poles.}
    \label{fig:exampleSingle}
\end{figure}

In particular, the zeros of $E^{\Omega_1}_{\Gamma_1}$ and $E^{\Omega_2}_{\Gamma_2}$ coincide with poles of $M_1+M_2$, representing cancellation. This is expected, since $E^{\Omega}_{\Gamma}$ has no zeros at these points. Also as expected, the zeros of $M_1+M_2$ coincide with those of $E^{\Omega}_{\Gamma}$. In terms of counting functions, we find that
\begin{align*}
\mathcal{N}_{H^\Omega_\Gamma}[5,60]  = 4,\hspace{.2in}
\mathcal{N}_{H^{\Omega_1}_{\Gamma_1}}[5,60] = 1,\hspace{.2in}
\mathcal{N}_{H^{\Omega_2}_{\Gamma_2}}[5,60] = 3,\\
N[5,60] = 4-4 = 0\hspace{1in}
\end{align*}
So the counting functions work in this case as predicted.

\subsection{Potential Barrier/Well on Wire's Interior}
In this example, we suppose that the boundary conditions at $x_1=1$ and $x_2=1$ are both Neumann ($u_i'(1) = 0$) for $i=1,2$. We additionally suppose we have a potential localized on the interior of wire one. Define the nonzero potential $V_1$ in $H^{\Omega}_\Gamma$ as:
\[
V_1(x_1) = \begin{cases}
\nu, & x_1\in (s_2,s_1)\\
0, & x_1\not\in (s_2,s_1)
\end{cases}
\]
where $0< s_2<s_1<1$, and $V_2(x_2)=0$. We wish to apply Theorem \ref{thm:counting2d} to count the eigenvalues of $H^{\Omega}_\Gamma$. For $\lambda>0$, we find that the relevant Evans functions are:
\[
E^{\Omega_1}_{\Gamma_1}(\lambda) = -\cos(\rlam(1-s_1)),
\]
\[
E^{\Tilde{\Omega}_1}_{\Tilde{\Gamma}_1}(\lambda) = \frac{\sin(\sqrten(s_1-s_2))}{\sqrten},
\]
\[
E^{\Tilde{\Omega}_2}_{\Tilde{\Gamma}_2}(\lambda) = -\cos(\rlam(1+s_2)),
\]
and the relevant one-sided maps are:
\[
\mathcal{M}_1(\lambda) = \begin{bmatrix}
\frac{\rlam\sin(\rlam (s_1-1))}{\cos(\rlam (s_1-1))} & 0\\ 0 & -\frac{\rlam \sin(\rlam (s_2+1))}{\cos(\rlam (s_2+1))}
\end{bmatrix}
\]
and
\[
\mathcal{M}_2(\lambda) = \begin{bmatrix}
\frac{\sqrten\cos(\sqrten(s_1-s_2))}{\sin(\sqrten (s_1-s_2))} & \frac{\sqrten}{\sin(\sqrten (s_2-s_1))}\\ -\frac{\sqrten}{\sin(\sqrten (s_1-s_2))} & -\frac{\sqrten\cos(\sqrten(s_2-s_1))}{\sin(\sqrten (s_2-s_1))}
\end{bmatrix}.
\]
Therefore, the map determinant is equal to
\begin{align*}
|(\M_1+\M_2)(\lambda)| =& \left( \rlam \tan(\rlam(s_1-1))+\sqrten\cot(\sqrten(s_1-s_2)) \right)
\\ &\times\left( -\rlam \tan(\rlam (s_2+1)) - \sqrten \cot(\sqrten (s_2-s_1)) \right)
\\& -\left (\frac{\sqrten}{\sin(\sqrten (s_1-s_2))} \right )^2
\end{align*}
Finally, one can show that the eigenvalues of $H^{\Omega}_\Gamma$ are given by the zeros of the following curve:
\begin{align}\label{eq:400}
E(\lambda)=& (2\lambda-\nu)\cos(\rlam(2+s_1-s_2))\sin(\sqrten(s_1-s_2))\nonumber \\&+ \nu\cos(\rlam(s_1+s_2))\sin(\sqrten(s_1-s_2))\nonumber\\&-2\rlam\sqrten\cos(\sqrten(s_1-s_2))\sin(\rlam(2+s_1-s_2)),
\end{align}
which, as in the previous example, is an Evans function for $H^{\Omega}_{\Gamma}$.

Again, we can check our result visually by plotting these objects against one another and making sure the poles and zeros line up in the right places. An example is shown in Figure \ref{fig:oneTwoExample}.
\begin{figure}[H]
    \centering
    \includegraphics[width=4in]{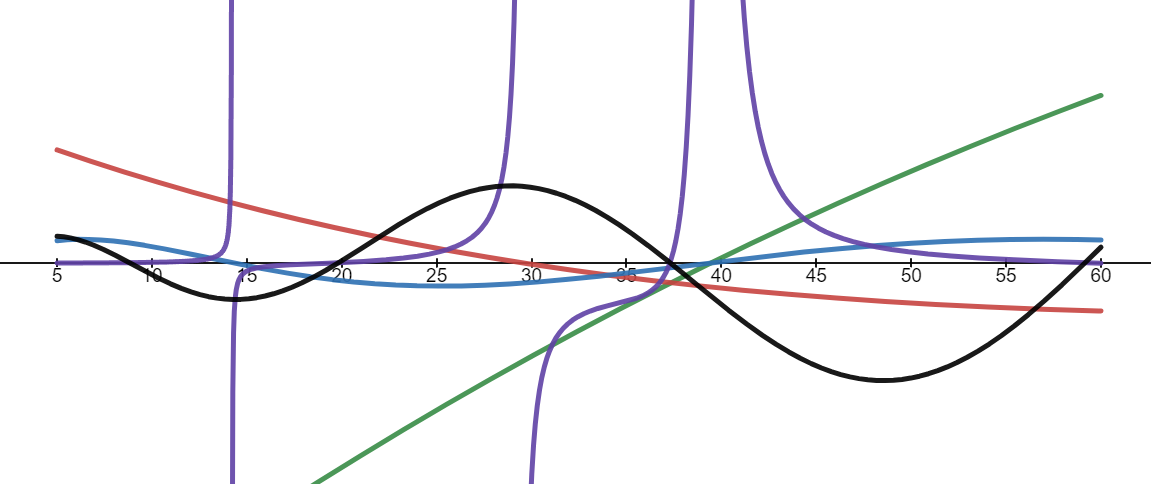}
    \caption{Two splits on one wire with $s_1=\frac{3}{4}$, $s_2=\frac{1}{4}$, and $\nu = -10$. The green curve is $20E^{\Omega_1}_{\Gamma_1}$, the red is $20E^{\Tilde{\Omega}_1}_{\Tilde{\Gamma}_1}$, the blue is $E^{\Tilde{\Omega}_2}_{\Tilde{\Gamma}_2}$, the purple curve is $\frac{1}{1000}|\M_1 +\M_2|$, and the black curve is $\frac{1}{10}E$. These objects are vertically rescaled to accentuate the zeros and poles.}
    \label{fig:oneTwoExample}
\end{figure}
This figure not only has poles and zeros lining up in the right places, it also has proper multiplicity: order one poles when only one Evans function has a zero of order one, and order two poles when two Evans functions have a zero of order one at the same $\lambda$. Indeed, we find that
\begin{align*}
\mathcal{N}_{H^\Omega_\Gamma}[5,60]  = 4,\hspace{.2in}
\mathcal{N}_{H^{\Omega_1}_{\Gamma_1}}[5,60] = 1,\hspace{.2in}
\mathcal{N}_{H^{\Tilde{\Omega}_1}_{\Tilde{\Gamma}_1}}[5,60]  = 1,\hspace{.2in}
\mathcal{N}_{H^{\Tilde{\Omega}_2}_{\Tilde{\Gamma}_2}}[5,60]  = 2,\\
N[5,60] = 4-4 = 0.\hspace{1.4in}
\end{align*}

\subsection{Potential Over Multiple Wires}
To demonstrate our last result, we consider a potential extending over both wires, and suppose that the boundary conditions at $x_1=1$ and $x_2=1$ are Dirichlet. Let
\[
V_i(x_i) = \begin{cases}
\nu, & x_i\in (0,s_i)\\
0, & x_i\not\in (0,s_i)
\end{cases}
\]
for both $i=1$ and $i=2$. To apply our theorem, we must construct the corresponding Evans functions.
\[
E^{\Omega_1}_{\Gamma_1}(\lambda) = -\frac{\sin(\rlam(1-s_1))}{\rlam},
\]
\[
E^{\Tilde{\Omega}_1}_{\Tilde{\Gamma}_1}(\lambda) = -\frac{\sin(\rlam(1-s_2))}{\rlam},
\]
and
\[
E^{\Tilde{\Omega}_2}_{\Tilde{\Gamma}_2}(\lambda) = -\frac{\sin(\sqrten(s_1+s_2))}{\sqrten}.
\]
It can be shown that the determinant of $\M_1+\M_2$ is given by
\begin{align*}
|(\M_1+\M_2)(\lambda)| =& \left(\sqrt{x+10}\cot\left(\sqrt{x+10}\right)+\sqrt{x}\cot\left(\frac{\sqrt{x}}{2}\right)+\frac{\sqrt{x+10}}{\sin\left(\sqrt{x+10}\right)}\right)
\\\times&\left(\sqrt{x+10}\cot\left(\sqrt{x+10}\right)+\sqrt{x}\cot\left(\frac{\sqrt{x}}{2}\right)-\frac{\sqrt{x+10}}{\sin\left(\sqrt{x+10}\right)}\right)
\end{align*}

It can also be shown that the function
\begin{align*}
E(\lambda) =& \sqrt{x}\sqrt{10+x}\cos\left(\sqrt{10+x}\right)\sin\left(\sqrt{x}\right)
\\+&\left(-5+\left(5+x\right)\cos\left(\sqrt{x}\right)\right)\sin\left(\sqrt{10+x}\right)
\end{align*}
indicates the location of the positive eigenvalues of $H^{\Omega}_\Gamma$. Once again, one can use a plot to confirm our formula accurately predicts the location of eigenvalues, as demonstrated in Figure \ref{fig:exampleDouble}.
\begin{figure}[H]
    \centering
    \includegraphics[width=4in]{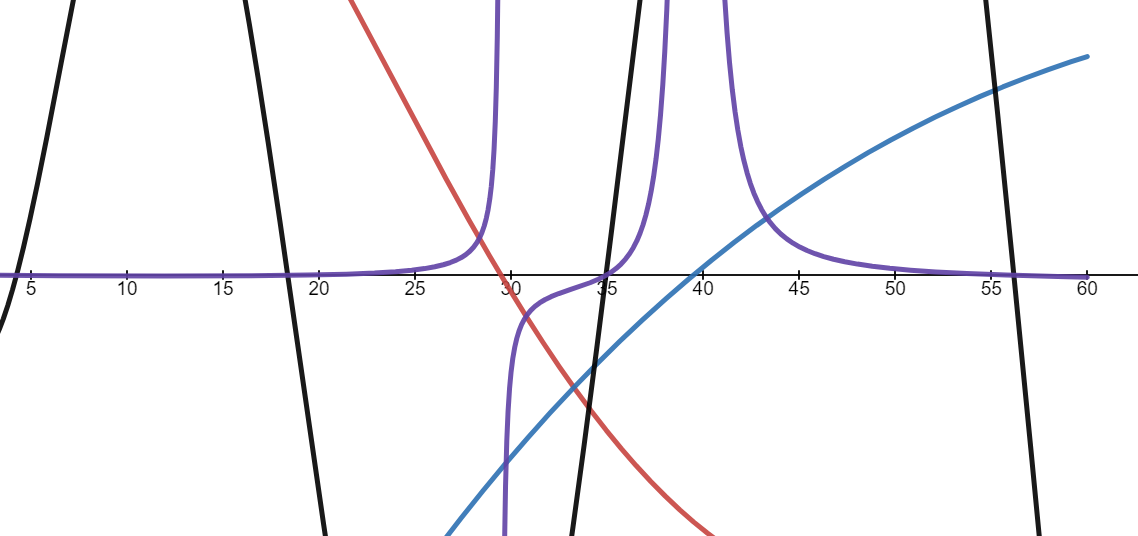}
    \caption{Two splits on two distinct wires with $s_1=s_2=\frac{1}{2}$ and $\nu = -10$. The blue curve is $100E^{\Omega_1}_{\Gamma_1}$ as well as $100E^{\Tilde{\Omega}_1}_{\Tilde{\Gamma}_1}$ (they are identical in this case), the red is $100E^{\Tilde{\Omega}_2}_{\Tilde{\Gamma}_2}$, the purple curve is $10|\M_1 +\M_2|$, and the black curve is $E$. These objects are vertically rescaled to accentuate the zeros and poles.}
    \label{fig:exampleDouble}
\end{figure}
Again, this figure demonstrates proper cancellation between the subproblem eigenvalues and the poles of the map determinant, and the proper correspondence between eigenvalues of $H^{\Omega}_{\Gamma}$ and the zeros of the map determinant. Examining the counting functions, we find that
\begin{align*}
\mathcal{N}_{H^\Omega_\Gamma}[3,60]  = 4,\hspace{.2in}
\mathcal{N}_{H^{\Omega_1}_{\Gamma_1}}[3,60]  = 1,\hspace{.2in}
\mathcal{N}_{H^{\Tilde{\Omega}_1}_{\Tilde{\Gamma}_1}}[3,60]  = 1,\hspace{.2in}
\mathcal{N}_{H^{\Tilde{\Omega}_2}_{\Tilde{\Gamma}_2}}[3,60]  = 1,\\
N[5,60] = 4-3 = 1.\hspace{1.4in}
\end{align*}

\appendix
\section{Note on Linear Algebraic Concepts}
Despite introducing separate spatial variables on each wire, we still wish to often refer to notions of linear dependence and independence of solution sets. 
\begin{theorem}\label{thm:Abel}
Let $\{\vu_1,...,\vu_{2n}\}$ be a set of solutions to the equation $(H^\Omega-\lambda)\vu = \Vec{0}$, where $H^\Omega$ is the differential expression from \eqref{eq:2}. Let $U$ be a $2n\times 2n$ matrix whose $i$th column is $\vu_i$ followed by $\vu_i'$. Then $|U|$ is a function of $\lambda$ only.
\end{theorem}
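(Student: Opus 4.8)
The plan is to show that $|U|$ is locally constant in the spatial variables $\vx=(x_1,\dots,x_n)$ by differentiating with respect to each $x_j$ and checking the derivative vanishes; this is the quantum-graph analogue of Abel's identity. The key structural observation is that the dependence of $U$ on $\vx$ is very rigid: since the $k$th component of each solution $\vu_i$ depends only on $x_k$, the $k$th row of $U$ is the vector $\big(u_{i,k}(x_k)\big)_{1\le i\le 2n}$ and its $(n+k)$th row is $\big(u_{i,k}'(x_k)\big)_{1\le i\le 2n}$. Hence, for a fixed index $j$, the only rows of $U$ that involve the variable $x_j$ are the $j$th and the $(n+j)$th.

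First I would use multilinearity of the determinant in its rows to write $\partial_{x_j}|U|$ as a sum of two determinants: one obtained from $U$ by replacing its $j$th row with that row's $x_j$-derivative, and one obtained by replacing its $(n+j)$th row with that row's $x_j$-derivative. In the first determinant, the $x_j$-derivative of the $j$th row $\big(u_{i,j}(x_j)\big)_i$ is exactly the $(n+j)$th row $\big(u_{i,j}'(x_j)\big)_i$ of $U$, which is already present; having two identical rows, this determinant is $0$.

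Next I would invoke the differential equation. Since $(H^\Omega-\lambda)\vu_i=\Vec{0}$ for every $i$, the $j$th component satisfies $u_{i,j}''=(V_j(x_j)-\lambda)\,u_{i,j}$. Therefore the $x_j$-derivative of the $(n+j)$th row of $U$ equals $(V_j(x_j)-\lambda)$ times the $j$th row of $U$, i.e.\ a scalar multiple of a row already present; so the second determinant also vanishes. Combining, $\partial_{x_j}|U|=0$ for each $j$.

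Finally I would address the regularity technicalities. Under the standing hypothesis $V_j\in L^1(0,\ell_j)$, each $u_{i,j}$ lies in $H^2(0,\ell_j)$, so $u_{i,j}'$ is absolutely continuous and $u_{i,j}''$ exists a.e.\ with $u_{i,j}''\in L^1$; thus $|U|$ is absolutely continuous in each $x_j$ with a.e.-vanishing partial derivative, hence constant in $x_j$. Iterating over $j=1,\dots,n$ shows $|U|$ depends on $\lambda$ alone. I do not anticipate any genuine obstacle here: the whole argument is a one-line row-operation computation, and the only point requiring a little care is to phrase the differentiation in the absolute-continuity/a.e.\ sense rather than the classical one, owing to the merely $L^1$ potentials.
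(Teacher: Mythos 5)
Your proof is correct, but it takes a genuinely different route from the paper's. The paper argues via a generalized Laplace expansion: it expands $|U|$ in $2\times 2$ complementary minors along the two rows carrying $(\vu_i)_1$ and $(\vu_i)_1'$, observes that each $2\times 2$ block is a Wronskian of two scalar solutions of $-u''+V_1u=\lambda u$ and is therefore $x_1$-independent by the classical one-variable Abel identity, and then iterates the same expansion on the complementary minors to dispose of $x_2,\dots,x_n$. You instead differentiate the full $2n\times 2n$ determinant in $x_j$ directly, using multilinearity in the rows together with the observation that only rows $j$ and $n+j$ involve $x_j$: the term where row $j$ is differentiated has a repeated row, and in the term where row $n+j$ is differentiated the equation $u_{i,j}''=(V_j-\lambda)u_{i,j}$ makes the new row proportional to row $j$. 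In effect you reprove Abel's identity from scratch in the multivariable setting rather than reducing to it. Your version avoids the permutation-sign bookkeeping of the block Laplace expansion and confronts the $L^1$-potential regularity head on; your absolute-continuity argument is exactly what is needed, although with $V_j$ merely in $L^1$ the solutions lie in $W^{2,1}(0,\ell_j)$ rather than $H^2(0,\ell_j)$ --- which is all you actually use. What the paper's version buys is a clean reduction to a standard one-dimensional fact and, as a by-product, a representation of $|U|$ as a signed sum of products of ordinary $2\times 2$ Wronskians, a decomposition that the paper reuses repeatedly in the Evans-function manipulations of Section 4. Both arguments are complete and correct.
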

\begin{proof}
We can calculate $|U|$ using Laplace expansion by complementary minors. In particular, we can choose to expand along the pair of rows $1$ and $n+1$, which contain information about $(\vu_i)_1$ and $(\vu_i)_1'$, respectively. Let $H$ be the set of two-element combinations $\{s,t\}$ from the set $\{1,...,2n\}$. Then the determinant of $U$ is
\begin{equation}\label{eq:det_expansion}
|U(\lambda)| = (-1)^{n-1}\sum_{\{s,t\}\in H} \pi(s,t)\begin{vmatrix}
(\vu_s)_1(x_1,\lambda) & (\vu_t)_1(x_1,\lambda)\\
(\vu_s)_1'(x_1,\lambda) & (\vu_t)_1'(x_1,\lambda)
\end{vmatrix}c_{s,t} \, ,
\end{equation}
where $\pi(s,t)$ is the sign of a permutation sending $1$ to $s$, $2$ to $t$, and all other numbers in $\{1,...,2n\}\setminus \{1,2\}$ are sent to the elements of $\{1,...,2n\} \setminus\{s,t\}$ in increasing order, and $c_{s,t}$ is the $(2n-2)\times (2n-2)$ complementary minor obtained from $U$ by deleting the $1$st and $(n+1)$th rows, and the $s$th and $t$th columns. The $(-1)^{n-1}$ term represents the row permutation  sending $1$ to $1$, $n+1$ to $2$, and all else paired in increasing order. Since the $2\times 2$ determinants involved in this expansion are solutions to a second order ordinary differential equation with no first order term, by Abel's Identity they depend only on $\lambda$, and not on the choice of $x_1$. We can clearly use a similar method to expand each $|c_{s,t}|$ by $2\times 2$ blocks of functions sharing one spatial variable, so none of these terms depend on $\vx$ either. Thus $|U|$ is $\vx$ independent, just as in the single variable case.
\end{proof}

As with a set of single variable vector functions, we say that $\{\vu_1,...,\vu_{2n}\}$ forms a linearly independent set if the only solution to
\begin{equation}\label{eq:independence}
\sum_{i=1}^{2n} c_i\vu_i(\vx,\lambda) = \Vec{0}\, ,
\end{equation}
is the one with $c_i=0$ for all $i$. This definition needs no adjustment from the single-variable case since vector addition acts componentwise, and thus the separate spatial variables do not interact at all in this sum. Indeed, we are free to vary each $x_i$ individually without altering the result of the sum. Since the typical definition of linear dependence applies for functions on a quantum graph, and since Abel's Identity still holds, one can use the standard proof to show that $|U|=0$ if and only if there is a linearly dependent set of columns in $U$.

\subsection*{Acknowledgements}
A.S. acknowledges the support of an AIM workshop on \emph{Computer assisted proofs for stability analysis of nonlinear waves}, where some of the key ideas were discussed.
A.S. acknowledges support from the National Science Foundation
under grant DMS-1910820.
\printbibliography
\end{document}